\title{A Cartan-Hadamard type result for relatively hyperbolic groups.}
\author{R\'emi Coulon, Michael Hull and Curtis Kent}
\begin{document}

\maketitle

\begin{abstract}
	In this article, we prove that if a finitely presented group has an asymptotic cone which is tree-graded with respect to a precise set of pieces then it is relatively hyperbolic.  
	This answers a question of M.~Sapir.
\end{abstract}

\tableofcontents

\section{Introduction}

\paragraph{}
Let $G$ be a finitely generated group, considered as a metric space with the corresponding word metric.
Roughly speaking, an asymptotic cone of $G$ is the metric space obtained by viewing $G$ from infinitely far away.
It can be defined as follows. Let $d=(d_n)$ be a sequence of real numbers diverging to infinity and $\omega$ a non-principal ultra-filter. The asymptotic cone $\ascong Gd$ is the $\omega$-limit of the sequences $(G/d_n)$ where $G/d_n$ stands for the group $G$ whose metric has been rescaled by $d_n$ (see \autoref{sec: ultra-limits} for a precise definition of ultra-limit of metric spaces). This object has been originally introduced by M.~Gromov \cite{Gromov:1981ve} and formalized later by L.~van Dries and A.~Wilkie using ultra-filters \cite{DriWil84}. An asymptotic cone captures some of the logical and geometrical properties of $G$.  In particular, it provides information about the large scale features of the group and thus serves as a quasi-isometry invariant \cite{Dru02}.  Indeed, if two groups are quasi-isometric, then their asymptotic cones are bi-Lipschitz equivalent.
In particular, the topology of the cone does not depend on the choice of a finite generating set for the group.
Asymptotic cones have been also used to characterize several classes of groups.
For instance, a finitely generated group is
\begin{enumerate}
	\item \label{enu: intro - characterization by ac - nilpotent}
	virtually nilpotent if and only if all its asymptotic cones are locally compact \cite{Gromov:1981ve,DriWil84,Dru02},
	\item \label{enu: intro - characterization by ac - hyperbolic}
	hyperbolic if and only if all its asymptotic cones are $\R$-trees \cite{Gro93}.
\end{enumerate}
Note that these statements involve \emph{all} asymptotic cones of the group.
Indeed $\ascong Gd$ does depend on the ultra-filter $\omega$ and the rescaling sequence $d$.
A finitely generated group $G$ is called \emph{lacunary hyperbolic} if \emph{one} of its asymptotic cones is an $\R$-tree \cite{OlcOsiSap09}.
Contrary to the situation described in \ref{enu: intro - characterization by ac - hyperbolic}, this class contains many non-hyperbolic groups: Gromov's Monster group \cite{Gro03,ArzDel08}, some Tarski monsters \cite{OlcOsiSap09,Olc80}, non-virtually cyclic elementary amenable groups \cite{OlcOsiSap09}, etc.
However in each of these examples the group is not finitely presented.
This is essential because of the following statement.

\begin{theo}[Kapovitch-Kleiner]{\rm \cite[Theorem 8.1]{OlcOsiSap09}}
\label{res: intro - main theorem for hyperbolic groups}
	Let $G$ be a finitely presented group.
	If \emph{some} asymptotic cone of $G$ is an $\R$-tree, then $G$ is hyperbolic.
\end{theo}

This theorem can been understood as a Cartan-Hadamard type statement for groups.
In Riemannian geometry, one challenge is to understand the global properties of a manifold using information from microscopic scale, in particular the ones supported by the curvature tensor.  The Cartan-Hadamard theorem states that the universal cover of a complete manifold with negative sectional curvature is homeomorphic to a Euclidian space. A similar local-to-global phenomenon exists for $\delta$-hyperbolic spaces (in the sense of Gromov \cite{Gro87}). However the $\delta$-hyperbolicity only reflects the metric properties of the space at a large scale compared to $\delta$. Therefore one does not want to use the microscopic scale of Riemannian geometry and prefers to look at balls of fixed radius $\sigma \gg \delta$.
A space $X$ is $\sigma$-locally $\delta$-hyperbolic, if every ball of $X$ of radius $\sigma$ is $\delta$-hyperbolic.
If any based loop in $X$ is homotopic to a product of loops each of which is freely homotopic to a loop of diameter at most $\epsilon>0$, we say that $X$ is \emph{$\epsilon$-simply connected}.
The following Cartan-Hadamard theorem has been formulated by M.~Gromov \cite{Gro87}.
There are many proofs in the literature (for example: B.~Bowditch \cite{Bowditch:1991wl}, P.~Papsoglu \cite{Papasoglu:1996uv}, or T.~Delzant and M.~Gromov \cite{DelGro08}).
We refer here to the statement given by the first author in \cite[Theorem A.1]{Coulon:2013tx}.

\begin{theo}[Cartan-Hadamard Theorem]
\label{res: intro - cartan hadamard}
Let $\delta \geq 0$ and  $\sigma >10^7 \delta$.
Let $X$ be a length space.
If $X$ is $\sigma$-locally $\delta$-hyperbolic and $10^{-5}\sigma$-simply connected then $X$ is (globally) $300\delta$-hyperbolic.
\end{theo}

From this point of view \autoref{res: intro - main theorem for hyperbolic groups} is a reformulation of the Cartan-Hadamard theorem.
Since one asymptotic cone of $G$ is an $\R$-tree, for every $\delta>0$, there exists  $\sigma > 10^7\delta$ such that $G$ is $\sigma$-locally $\delta$-hyperbolic.
On the other hand, $G$ being finitely presented, its Cayley graph is $\epsilon$-simply connected for a sufficiently large $\epsilon$.
Thus the hyperbolicity of $G$ follows from the Cartan-Hadamard theorem.

\paragraph{}
In this paper we provide a generalization of \autoref{res: intro - main theorem for hyperbolic groups} for relatively hyperbolic groups.
The notion of a group being hyperbolic relative to a collection of subgroups was introduced by Gromov in \cite{Gro87}.
It provides a natural generalization of hyperbolicity.
Indeed a group is hyperbolic if and only if it is hyperbolic relative to the trivial subgroup.
In addition, many results about hyperbolic groups have some natural analogue for relatively hyperbolic groups.
Examples of relatively hyperbolic groups include:
the fundamental group of a complete, finite volume manifold with pinched negative sectional curvature is hyperbolic relative to the cusp subgroups \cite{Bowditch:2012ga,Far98},
geometrically finite Kleinian groups are hyperbolic relative to the set of maximal parabolic subgroups \cite{Bowditch:2012ga,Yaman:2004ka},
and the fundamental group of a finite graph of groups with finite edge groups is hyperbolic relative to the vertex groups \cite{Bowditch:2012ga}.
Note that every finitely generated group with infinitely many ends is contained in this last example by the famous theorem of Stallings \cite{Stallings:1968us,Stallings:1971up}.

\paragraph{}
There are several definitions of relative hyperbolicity (see \cite{Hruska:2010iw} and references therein). We will use the following definition, which is essentially the same as the original definition of Gromov \cite{Gro87}. For simplicity, we will present in the introduction our work for a single parabolic subgroup.
However everything holds for a finite family of parabolic subgroups.

\begin{defi}{\rm \cite[Definition 3.3]{Hruska:2010iw}} \quad
\label{def: intro - relatively hyperbolic}
	Let $G$ be a group and $H$ a subgroup of $G$.
	Assume that $G$ acts properly on a proper $\delta$-hyperbolic space $X$, such that every maximal parabolic subgroup is a conjugate of $H$.	
	Suppose also that there is a $G$-equivariant collection of disjoint open horoballs centered at the parabolic points of $G$, with union $U$, such that $G$ acts co-compactly on $X \setminus U$.
	Then $G$ is \emph{hyperbolic relative to} $H$.
\end{defi}

The asymptotic cones of relatively hyperbolic groups have been described by C.~Dru\c tu, D.~Osin and M.~Sapir in \cite{DruSap05}.
To that end they introduced the notion of a \emph{tree-graded} space.
\begin{defi}
\label{def: intro - tree graded space}
	Let $X$ be a complete geodesic metric space and $\mathcal Y$ a collection of closed geodesic subspaces (called \emph{pieces}).
	The space $X$ is \emph{tree-graded} with respect to $\mathcal Y$ if the following holds.
	\begin{indexedenu}[T]
		\item Every two different pieces have at most one common point.
		\item Every simple closed curve in $X$ is contained in one piece.
	\end{indexedenu}
\end{defi}

Asymptotic cones of relatively hyperbolic groups are tree-graded with respect to limits of cosets of the parabolic subgroups \cite{DruSap05}. To be more precise,
let $G$ be a finitely generated group and let $H$ be a subgroup of $G$.
Let $\omega$ be a non-principal ultra-filter and $d=(d_n)$ a sequence of rescaling parameters diverging to infinity.
Given a collection of subsets $(Y_n)$ of $G$, the limit set $\limo Y_n$ is the set of points of $\ascong Gd$ that can be obtained as a limit of points $(y_n)$ with $y_n \in Y_n$.
The group $G$ is \emph{sparsely asymptotically tree-graded} (\resp \emph{asymptotically tree-graded}) with respect to $H$ if there exists an asymptotic cone such that (\resp for every asymptotic cone) the following holds.
\begin{indexedenu}[T^\omega]
	\item For every  sequence $(g_n)$ of elements of $G$, if $\limo g_nH$ and $\limo H$ have more than one common point then $g_n \in H$ $\omega$-almost surely.
	\item Every simple closed curve in $\ascong Gd$ is contained in some limit $\limo g_nH$ with $g_n \in G$.
\end{indexedenu}

\begin{theo}[Dru\c tu-Osin-Sapir]{\rm \cite[Theorem 8.5]{DruSap05}}\quad
\label{the: intro - asymptotic cones of relatively hyperbolic groups}
	A finitely generated group $G$ is asymptotically tree-graded with respect to a subgroup $H$ if and only if it is hyperbolic relative to $H$.
\end{theo}

In view of \autoref{res: intro - main theorem for hyperbolic groups} and \autoref{the: intro - asymptotic cones of relatively hyperbolic groups}, M.~Sapir asked the following question.
Let $G$ be a finitely presented group and $H$ a subgroup of $G$.
If $G$ is sparsely asymptotically tree-graded with respect to $H$, then is $G$ hyperbolic relative to $H$?
Our main result answers this question positively. Moreover, we show that $G$ does not have to be finitely presented, but only finitely presented \emph{relative} to the subgroup $H$.
Recall that a group $G$ is \emph{finitely presented relative to a subgroup} $H$ if there exist a finite subset $S$ of $G$ and a finite subset $R$ of the free product $\mathbf F(S) * H$ (where $\mathbf F(S)$ denotes the free group generated by $S$) such that $G$ is isomorphic to the quotient of $\mathbf F(S)*H$ by the normal subgroup generated by $R$.
In particular if $G$ is a finitely presented group and $H$ a subgroup of $G$, then $G$ is finitely presented relative to $H$ if and only if $H$ is finitely generated.

\begin{theo}
\label{res: intro - main theorem}
	Let $G$ be a finitely generated group and $H$ a subgroup of $G$.
	Assume that $G$ is finitely presented relative to $H$.
	If $G$ is sparsely asymptotically tree-graded with respect to $H$ then $G$ is hyperbolic relative to $H$.
\end{theo}

Let us briefly sketch the proof.
If $G$ is finitely generated and finitely presented relative to $H$, then $H$ must be finitely generated \cite{Osin:2006cf}.
So without loss of generality, we can choose a finite generating set of $G$ that contains a generating set of $H$.
In particular, the corresponding Cayley graph $Y$ of $H$ embeds into the corresponding Cayley graph $X$ of $G$.
In addition, we can assume that $H$ is infinite.
Otherwise the asymptotic cone of $G$ would just be an $\R$-tree and the result would follow from \autoref{res: intro - main theorem for hyperbolic groups}.
We construct from this data a cone-off space $\dot X$ obtained as follows.
For every $g \in G/H$ we attach to the space $X$ an horocone $Z(gY)$ along $gY$.
This horocone is the product $Z(gY) = gY\times \R_+$ endowed with a metric modeled on an horoball of the hyperbolic plane $\H$.
Similar constructions have been considered by many people (for instance \cite{Bowditch:2012ga, DelGro08,Groves:2008ip,Coulon:il}).
In fact, if $G$ is hyperbolic relative to $H$ then the canonical action of $G$ on the cone-off $\dot X$ satisfies the axioms of \autoref{def: intro - relatively hyperbolic}.
Therefore $\dot X$ is a natural candidate to prove that $G$ is relatively hyperbolic under our assumptions.
In particular we need to show that $\dot X$ is hyperbolic which is where we use the previously stated Cartan-Hadamard theorem for hyperbolic spaces. As explained before it combines two ingredients, a topological one and a metric one.

\paragraph{}
Since $G$ is finitely presented relative to $H$ every loop in $X$ (and thus $\dot X$) can be written as a product of loops each of which is homotopic either to a loop given by a relation of our presentation or a loop contained in $gY$ for some $g \in G$.
Since the presentation is finite, there are only finitely many loop in the first class (up to conjugacy).
On the other hand, any loop of the second type can be pushed up in the horocone in such a way that its diameter becomes arbitrarily small.
Thus there exists $\epsilon>0$ such that $\dot X$ is $\epsilon$-simply-connected (see \autoref{res: cone-off simply-connected}).

\paragraph{}
By construction, every horocone of $\dot X$ is $2\boldsymbol \delta$-hyperbolic, where $\boldsymbol \delta$ is the hyperbolicity constant of the hyperbolic plane $\H$.
However, attached together, the cone-off space $\dot X$ might fail to be hyperbolic (even locally hyperbolic).
Here we use the asymptotic properties of $G$.
By assumption an asymptotic cone $X_\infty=\ascong Gd$ of $G$ is tree-graded with respect to a collection of pieces $\mathcal Y_\infty$ which correspond exactly to the limits of translates of $Y$.
In particular if we perform the cone-off construction by attaching on $X_\infty$ horocones $Z(Y_\infty)$ with base $Y_\infty \in \mathcal Y_\infty$, we obtain a $2\boldsymbol \delta$-hyperbolic space.
Indeed, if we attach together two hyperbolic spaces which have a single common point, the resulting space is still hyperbolic with the same hyperbolicity constant.
This proves that a cone-off over the asymptotic cone of $G$ is hyperbolic.
However the two operations -- building the cone-off and taking the asymptotic cone -- can be interchanged (in a precise way, see \autoref{res: cone-off vs ultra-limit}).
It follows that a limit of cone-off spaces $(\dot X_n)$ built  over the Cayley graph $X$ of $G$ is hyperbolic, therefore one of them is locally hyperbolic (with constants as good as desired).
Using \autoref{res: intro - cartan hadamard}, we get that there exists $n \in \N$ such that $\dot X_n$ is globally hyperbolic (see \autoref{res: cone-off curvature general case}).
Then we prove that it also satisfies the other assumptions of \autoref{def: intro - relatively hyperbolic}.

\paragraph{Outline of the paper.} In \autoref{sec: metric spaces} we recall the properties about metric spaces that will be useful later.
In particular we make precise the definitions  that have been sketch in the introduction (hyperbolic spaces, ultra-limits, tree-graded spaces, etc).
At the end of the section we reformulate our main theorem in terms of an action on a metric space (see \autoref{res: main theorem geometric form}).
\autoref{sec: cone-off} is devoted to proof of a slightly weaker form of \autoref{res: main theorem geometric form} (see \autoref{res: main theorem weak geometric form}).
We use as explained in the introduction the cone-off construction with horocones.
In \autoref{sec: spaces with a tree graded asymptotic cone}, we explain how the strong statement (\autoref{res: main theorem geometric form}) can actually be deduced from the weaker one (\autoref{res: main theorem weak geometric form}).
It uses a careful study of tree-graded ultra limits.
The last section provides examples, in particular \autoref{res: intro - main theorem} and comments about the main result.

\paragraph{Acknowledgement.} We would like to thank Mark Sapir for his  helpful conversations on this topic as well as introducing the question to the authors.


%
%

\section{Generalities about metric spaces.}
\label{sec: metric spaces}

\paragraph{}
In this section we collect some definitions and properties about metric spaces.
Let $X$ be a metric space.
Given two points $x$, $x'$ in $X$, we write $\dist[X] x{x'}$ or simply $\dist x{x'}$ for the distance between them.
The open ball of radius $r$ and center $x$ is denoted by $\ball xr$.
The space $X$ is \emph{proper} if every bounded closed subset of $X$ is compact.
Let $I$ be an interval of $\R$.
The length of a rectifiable path $\gamma : I \rightarrow X$ is denoted by $L(\gamma)$.

\subsection{Ultra-limit of metric spaces and asymptotic cones}
\label{sec: ultra-limits}

\paragraph{}
A \emph{non-principal utltra-filter} is a finite additive map $\omega : \mathcal P(\N) \rightarrow \{0,1\}$ which vanishes on every finite subset of $\N$ and such that $\omega(\N) = 1$.
A property $P_n$ depending on an integer $n$ is said to be true \oas if $\omega(\set{n \in \N}{P_n \text{ is true}}) =1$.
An sequence of real numbers $(u_n)$ is \oeb if there exists a number $M$ such that $|u_n| \leq M$ \oas.
Given a real number $l$ we say that the \emph{$\omega$-limit} of $(u_n)$ is $l$ and write $\limo u_n = l$ if for every $\epsilon > 0$, $|u_n - l| \leq \epsilon$ \oas.
In particular any \oeb sequence of real numbers admits an $\omega$-limit \cite{Bou71}.

\paragraph{}
Let $(X_n,e_n)$ be a sequence of pointed metric spaces.
We define the partial product $\Pi_\omega X_n$  by
\begin{equation*}
	\Pi_\omega X_n  = \set{(x_n) \in \Pi_{n\in \N}X_n}{\dist{x_n}{e_n} \text{ is \oeb}}.
\end{equation*}
This set is endowed with a pseudo-metric given by the following formula.
Given two elements $x=(x_n)$ and  $x'=(x'_n)$ in $\Pi_\omega X_n$, $\dist x{x'} = \limo \dist{x_n}{x'_n}$.

\begin{defi}
\label{def: ultra-limit metric space}
	Let $(X_n,e_n)$ be a sequence of pointed metric spaces and $\omega$ a non-principal ultra-filter.
	The \emph{$\omega$-limit} of $(X_n, e_n)$ denoted by $\limo (X_n, e_n)$, or simply $\limo X_n$, is the quotient of $\Pi_\omega X_n$ by the equivalence relation which identifies two points at distance zero.
	The pseudo-distance on $\Pi_\omega X_n$ induces a distance on $\limo X_n$.
\end{defi}

\notas With the same notations as in the previous definition.
\begin{itemize}
	\item Let $(x_n)$ be an element of $\Pi_\omega X_n$.
	The image of $(x_n)$ in $\limo X_n$ is denoted $\limo x_n$.
	\item For every $n \in \N$, let $Y_n$ be a subset of $X_n$.
	We define the subset $\limo Y_n$ of $\limo X_n$ by
	\begin{equation*}
		\limo Y_n = \set{\limo y_n}{(y_n) \in \Pi_\omega X_n \text{ and } y_n \in Y_n \text{ \oas}}.
	\end{equation*}
\end{itemize}

The asymptotic cone of a metric space is a particular example of ultra-limit.
It is defined as follows.
\begin{defi}
\label{def: asymptotic cone}
	Let $\omega$ be a non-principal ultra-filter.
	Let $X$ be a metric space.
	Let $e=(e_n)$ be a sequence of points of $X$ and $d=(d_n)$ a sequence a real numbers diverging to infinity.
	The \emph{asymptotic cone of $X$ with respect to $e$, $d$ and $\omega$} and denoted by $\ascon Xd$ is the $\omega$-limit $\limo ((1/d_n)X,e_n)$ where $(1/d_n)X$ stands for the space $X$ whose metric has been rescaled by $d_n$.
\end{defi}

\subsection{Tree-graded spaces.}

\begin{defi}[Due to C.~Drutu and M.~Sapir] {\rm \cite{DruSap05}} \quad
	Let $X$ be a complete geodesic metric space and $\mathcal Y$ a collection of closed geodesic subsets (called \emph{pieces}).
	Suppose that the following two properties are satisfied.
	\begin{indexedenu}[T]
	    \item\label{T1} Every two different pieces have at most one common point.
	    \item\label{T2} Every simple closed curve in $X$ is contained in one piece.
	\end{indexedenu}
	Then we say that the space $X$ is \emph{tree-graded with respect to $\mathcal Y$}.
\end{defi}

\begin{defi}
	Let $X$ be tree-graded with respect to $\mathcal Y$ a collection of closed subsets of $X$.
	Suppose that $\gamma:S^1\to X$ is a continuous map which intersects a piece $Y$  from $\mathcal Y$.
	Then $S^1\backslash \{\gamma^{-1}(M)\}$ is the union of disjoint open intervals or empty.
	The \emph{$Y$-transition points} of $\gamma$ are the endpoints of the connected components of $S^1\backslash \{\gamma^{-1}(Y)\}$.
	A point of $S^1$ is a \emph{transition point} of $\gamma$ if it is an $Y$-transition point for some piece $Y$.
\end{defi}

\begin{lemm}
\label{M-transitions}
	Suppose that $X$ is tree-graded by closed pieces.
	If $\gamma: S^1\to X$ is not contained in a single piece, then there exists distinct transition points $t_1,t_2\in S^1$ such that $\gamma(t_1) = \gamma(t_2)$.
\end{lemm}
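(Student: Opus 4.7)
My approach is by contradiction using property \ref{T2}. Suppose for contradiction that $\gamma$ is not contained in a single piece but no two distinct transition points of $\gamma$ have the same image. Since $\gamma$ is not contained in any piece, \ref{T2} forces $\gamma$ to have self-intersections, so the set of pairs $(s,t)\in (S^1\times S^1)\setminus \Delta$ with $\gamma(s)=\gamma(t)$ is a nonempty closed subset. I would select a pair $(a,b)$ minimizing the length of the shorter of the two arcs they determine on $S^1$; call this arc $I$. By minimality, $\gamma|_I$ is injective on the open arc and $\gamma(a)=\gamma(b)$, so (identifying the endpoints of $I$) it is a simple closed curve in $X$, and \ref{T2} places its image in a single piece $Y$.

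Next, I would extract a pair of transition points. Let $[a',b']$ denote the connected component of the closed set $\gamma^{-1}(Y) \subseteq S^1$ containing the arc $I$. Since $\gamma$ is not contained in $Y$, this set is a proper closed subset of $S^1$, so $a'$ and $b'$ are boundary points of $\gamma^{-1}(Y)$ and hence $Y$-transition points, both with images in $Y$. By the contradiction hypothesis, $\gamma(a') \neq \gamma(b')$. I would then construct a new closed curve $\sigma$ by concatenating $\gamma|_{[b',a']}$ (the arc going the other way around $S^1$) with a geodesic $\beta \subseteq Y$ from $\gamma(b')$ to $\gamma(a')$. Any piece $Y'$ containing $\sigma$ would contain the nontrivial geodesic $\beta$, so by \ref{T1} one would have $Y'=Y$, forcing $\gamma([b',a']) \subseteq Y$ and hence $\gamma(S^1)\subseteq Y$, a contradiction. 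Thus $\sigma$ is not contained in any piece, and \ref{T2} now implies that $\sigma$ is not simple.

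The main obstacle is to convert a self-intersection of $\sigma$ into the desired coincidence among transition points of $\gamma$. Intersections of $\gamma|_{[b',a']}$ with $\beta$ force points of the open arc $(b',a')$ into $\gamma^{-1}(Y)$, hence $\gamma^{-1}(Y)$ has components outside $[a',b']$, producing new $Y$-transition points that one can pair against $a',b'$. Self-intersections lying entirely in $\gamma|_{[b',a']}$ give sub-bigons of $\gamma$ which, after passing to a minimal sub-bigon as in the first paragraph, force the existence of another piece $Y'$ meeting $\gamma$ and with its own transition points. I expect the cleanest way to close the argument is to iterate this construction with a well-chosen complexity measure (for instance, the number of components of $\gamma^{-1}(Y)$ combined lexicographically with arc lengths) that strictly descends under the operation, so that the iteration must terminate at a configuration in which two transition points of some piece coincide.
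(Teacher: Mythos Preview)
Your plan has two concrete gaps before the admittedly unfinished iteration. First, the minimization step does not go through: the set $\{(s,t):\gamma(s)=\gamma(t),\ s\neq t\}$ is closed only in $(S^1\times S^1)\setminus\Delta$, not in the compact space $S^1\times S^1$, so nothing prevents the shorter-arc length from having infimum $0$; self-intersections of a continuous loop can accumulate on the diagonal. Second, even granting a minimal bigon $I$ inside a piece $Y$, you claim that the endpoints $a',b'$ of the component of $\gamma^{-1}(Y)$ containing $I$ are $Y$-transition points. By the paper's definition, transition points are endpoints of components of the \emph{complement} $S^1\setminus\gamma^{-1}(Y)$. An endpoint of a component of $\gamma^{-1}(Y)$ is certainly a boundary point of that set, but it need not bound any single complementary interval: $\gamma^{-1}(Y)$ could look like a Cantor set accumulating on $[a',b']$ from the outside, in which case $a'$ is a limit of transition points without being one. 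With these two steps broken, the subsequent construction of $\sigma$ and the hoped-for descent never get started.

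The paper's argument bypasses all of this with one structural fact about tree-graded spaces (Dru\c tu--Sapir): the nearest-point projection onto a piece $Y$ is a well-defined map, and any arc meeting $Y$ only at one endpoint projects entirely to that endpoint. One fixes a piece $Y$ with $\gamma^{-1}(Y)$ non-degenerate and simply takes a component $(x,y)$ of the \emph{complement} $S^1\setminus\gamma^{-1}(Y)$; then $x\neq y$ are transition points by definition, $\gamma([x,y))$ projects to $\gamma(x)$, $\gamma((x,y])$ projects to $\gamma(y)$, and single-valuedness on the overlap forces $\gamma(x)=\gamma(y)$. No minimization, no iteration, and the choice of working with complementary intervals rather than components of $\gamma^{-1}(Y)$ is exactly what makes the endpoints transition points for free.
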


\begin{proof}
	Without loss of generality we can assume that $\gamma$ is not a constant map.
	Since $X$ is tree graded  $\gamma^{-1}(Y)$ is non-degenerate for some piece $Y$.
	Fix such a piece $Y$.
	Suppose that $x,y$ are the two endpoints of a connected component $[x,y]$ of $S^1\backslash \{\gamma^{-1}(Y)\}$.
	($S^1\backslash \{\gamma^{-1}(Y)\}$ is non-empty since $\gamma$ is not contained in single piece.)
	Since $\gamma^{-1}(Y)$ is non-degenerate, $x\neq y$.
	Let $(x,y) = [x,y]\backslash \{x,y\}$, $[x,y) = [x,y]\backslash \{y\}$, and $(x,y] = [x,y]\backslash \{x\}$. Then $\gamma(x),\gamma(y) \in Y$ and $\gamma\bigl((x,y)\bigr) \cap Y = \emptyset$.
	Following proof of \cite[Corollary 2.9]{DruSap05} of $\gamma\bigl([x,y)\bigr)$ projects onto $\gamma(x)$ and $\gamma\bigl((x,y]\bigr)$ projects onto $\gamma(y)$.
	Since projection onto a piece is a well-defined map, $\gamma(x) = \gamma(y)$.
\end{proof}

\begin{defi}
	Fix a non-principal ultra-filter $\omega$.
	For every $n \in \N$, let  $(X_n,e_n)$ be a pointed geodesic metric space and $\mathcal Y_n $ a family of subspaces of $X_n$.
	We will then say that $(X_n,e_n)$ is \emph{sparsely tree-graded} with respect to $(\mathcal Y_n)$, if the following assertions hold.
	\begin{indexedenu}[T^\omega]
	    \item\label{Tw1}
	    For every $(Y_n), (Y_n')\in \Pi_{n \in \N}\mathcal Y_n$, if $Y_{n} \neq Y_{n}'$ \oas, then the subsets  $\limo Y_{n}$ and $\limo Y_{n}'$ of $\limo (X_n,e_n)$ have at most one common point.
	   \item\label{Tw2}
	   Every simple closed curve in $\limo (X_n,e_n)$ is contained in some $\limo Y_n$.
	\end{indexedenu}

\end{defi}
Note that being sparsely tree-graded depends on an ultrafilter and is slightly stronger than $\limo (X_n, e_n)$ being tree-graded with respect to $\mathcal Y = \set{\limo Y_n}{(Y_n) \in \Pi_{n \in \N}\mathcal Y_n}$.
Let $X$ be a metric space and $\mathcal Y$ be a collection of subspaces of $X$.
We will say that $X$ is \emph{sparsely asymptotically tree-graded with respect to $\mathcal Y$}, if there exists a non-principal ultra-filter $\omega$, a sequence $d = (d_n)$ of numbers diverging to infinity and a sequence $e=(e_n)$ of base points of $X$ such that  such that $(X_n,e_n)$ is sparsely tree-graded with respect to $(\mathcal Y_n)$, where for every $n \in \N$,
\begin{itemize}
	\item $X_n = (1/d_n)X$ is the space $X$ rescaled by $d_n$.
	\item $\mathcal Y_n = \set{(1/d_n)Y}{Y \in \mathcal Y}$ is the image in $X_n$ of the collection $\mathcal Y$.
\end{itemize}
When necessary for clarity, we will say that $X$ is $(\omega, d,e)$-sparsely asymptotically tree-graded.
If a metric space is $(\omega, d,e)$-sparsely asymptotically tree-graded for every triple $(\omega, d,e)$, then $X$ is \emph{asymptotically tree-graded} in the terminology of C.~Drutu and M.~Sapir \cite{DruSap05}.
Let $G$ be a finitely generated group and $\{H_1, \dots ,H_m\}$ be a finite collection of subgroups of $G$.
By abuse of notation we will say that the group $G$ is \emph{sparsely asymptotically tree-graded with respect to $\{H_1, \dots ,H_m\}$} if $G$ (as a metric space) is sparsely asymptotically tree-graded with respect to the set $\mathcal H$ of all $H_i$-cosets, i.e.
\begin{equation*}
	\mathcal H = \bigcup_{i =1}^m \set{gH_i}{g \in G/H_i}.
\end{equation*}

\begin{lemm}
\label{res: tree-graded ultra-limit change of base point}
Let $G$ be a group.
For every $n \in \N$, we consider a metric space $X_n$ endowed with an action by isometries of $G$ and a $G$-invariant collection $\mathcal Y_n$  of subspaces of $X_n$.
We assume that the diameter of $X_n/G$ is uniformly bounded.
Let $(e_n),(e'_n) \in \Pi_{n\in \N} X_n$ be two sequences of base points.
The spaces $\limo (X_n, e_n)$ and $\limo(X_n,e'_n)$ are isometric.
Moreover $(X_n,e_n)$ is sparsely tree-graded with respect to $(\mathcal Y_n)$ if and only if $(X_n,e'_n)$ is sparsely tree-graded with respect to $(\mathcal Y_n)$.
\end{lemm}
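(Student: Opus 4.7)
The plan is to exploit the bounded quotient diameter to produce, for each $n$, a group element $g_n\in G$ that sends $e_n$ close to $e_n'$, and then use the $g_n$ to build an isometry between the two ultra-limits that carries limits of $\mathcal Y_n$-sequences to limits of $\mathcal Y_n$-sequences.

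First I would fix a uniform bound $D$ on $\operatorname{diam}(X_n/G)$ and, using the definition of the quotient diameter, choose for each $n$ an element $g_n\in G$ with $\dist{g_ne_n}{e_n'}\le D+1$. Define a map $\phi\colon\limo(X_n,e_n)\to\limo(X_n,e_n')$ by $\phi(\limo x_n)=\limo g_nx_n$. To see this is well defined, I would check that the sequence $(g_nx_n)$ lies in $\Pi_\omega X_n$ relative to $(e_n')$: the triangle inequality together with the fact that $G$ acts by isometries gives
\[
\dist{g_nx_n}{e_n'}\le \dist{g_nx_n}{g_ne_n}+\dist{g_ne_n}{e_n'}=\dist{x_n}{e_n}+\dist{g_ne_n}{e_n'}\le \dist{x_n}{e_n}+D+1,
\]
so $\omega$-boundedness of $\dist{x_n}{e_n}$ gives $\omega$-boundedness of $\dist{g_nx_n}{e_n'}$. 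Independence of the chosen representative $(x_n)$ follows because $\limo\dist{g_nx_n}{g_ny_n}=\limo\dist{x_n}{y_n}$. The same identity shows $\phi$ is an isometric embedding, and the inverse map obtained by applying $g_n^{-1}$ to representatives, after checking the analogous $\omega$-boundedness using $\dist{g_n^{-1}e_n'}{e_n}\le D+1$, provides the inverse of $\phi$. Hence $\phi$ is a surjective isometry.

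Next I would transfer the tree-graded structure along $\phi$. By $G$-invariance of each $\mathcal Y_n$, if $(Y_n)\in\Pi_{n\in\mathbb N}\mathcal Y_n$ then $(g_nY_n)$ is again a sequence in $\Pi_{n\in\mathbb N}\mathcal Y_n$, and directly from the definition $\phi(\limo Y_n)=\limo g_nY_n$. Conversely, any sequence $(Y_n')\in\Pi_{n\in\mathbb N}\mathcal Y_n$ can be written as $(g_n Y_n)$ with $Y_n=g_n^{-1}Y_n'\in\mathcal Y_n$, so $\phi$ induces a bijection between the collections $\{\limo Y_n\}$ on both sides. Moreover, since each $g_n$ acts as a bijection on $\mathcal Y_n$, one has $Y_n\neq Y_n'$ $\omega$-almost surely if and only if $g_nY_n\neq g_nY_n'$ $\omega$-almost surely.

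Armed with this, both properties \ref{Tw1} and \ref{Tw2} transfer verbatim: \ref{Tw1} because the number of common points of two limit pieces is an isometric invariant preserved by $\phi$, and \ref{Tw2} because $\phi$ sends simple closed curves to simple closed curves and the collections of pieces correspond bijectively. I do not foresee a genuine obstacle here; the only mildly delicate point is ensuring that $\phi$ and its inverse are both well defined as maps on ultra-limits, which is handled by the uniform bound $D$ on the quotient diameter combined with the fact that $G$ acts by isometries.
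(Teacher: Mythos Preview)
Your proposal is correct and follows essentially the same argument as the paper: choose $g_n\in G$ moving $e_n$ uniformly close to $e_n'$, use it to build the isometry $\phi(\limo x_n)=\limo g_nx_n$, and then transport the two axioms $(T_1^\omega)$ and $(T_2^\omega)$ via the $G$-invariance of each $\mathcal Y_n$. The only differences are cosmetic---you spell out well-definedness and the inverse map more explicitly, and you phrase the transfer via the bijection $\phi(\limo Y_n)=\limo g_nY_n$ rather than inclusion of intersections---but the substance is identical.
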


\rem In this paper we are mainly interested in asymptotic cones of groups.
Thus in our examples the diameter of $X_n/G$ will always be uniformly bounded.
Therefore beging sparsely tree-graded is independent of the choice of the base points.

\begin{proof}
	The diameter of the sequence $(X_n/G)$ is bounded.
	Therefore there exists a sequence $(g_n)$ of elements of $G$ such that $\dist{g_ne_n}{e'_n}$ is uniformly bounded.
	In particular for every sequence $(x_n) \in \Pi_{n\in \N}X_n$, if $(\dist{x_n}{e_n})$ is \oeb, so is $(\dist{g_nx_n}{e'_n})$.
	This allows us to define a map $\phi : \limo (X_n,e_n) \rightarrow \limo(X_n,e'_n)$ as follows.
	\begin{equation*}
		\begin{array}{lccc}
			\phi : & \limo (X_n, e_n)	& \rightarrow	& \limo (X_n, e'_n) \\
				& \limo x_n 		& \rightarrow	& \limo g_nx_n
		\end{array}
	\end{equation*}
	Since $G$ acts by isometries on $X_n$, the map $\phi$ is an isometry.
	We claim that it preserves the tree-graded structure.
	Assume that $(X_n,e'_n)$ is sparsely tree-graded with respect to $(\mathcal Y_n)$.
	\begin{enumerate}
		\item For every $n \in \N$, we choose $Y_n, Y'_n \in \mathcal Y_n$ such that $Y_n \neq Y'_n$ \oas.
		The map $\phi$ sends $\limo Y_n \cap \limo Y'_n$ to a subset of $\limo g_nY_n \cap \limo g_nY'_n$.
		However for every $n \in \N$, $\mathcal Y_n$ is $G$-invariant, thus $g_nY_n$ and $g_nY'_n$ are two elements of $\mathcal Y_n$.
		Moreover $g_nY_n \neq g_nY'_n$ \oas.
		Since $(X_n,e'_n)$ is sparsely tree-graded with respect to $(\mathcal Y_n)$, $\limo g_nY_n$ and $\limo g_nY'_n$ have at most one common point.
		It follows that $\phi(\limo Y_n \cap \limo Y'_n)$ contains at most one point.
		Recall that $\phi$ is a bijection, thus the same holds for $\limo Y_n \cap \limo Y'_n$.
		\item Let $\gamma$ be a simple closed loop of $\limo(X_n, e_n)$.
		Since $\phi$ is an isometry, $\phi\circ\gamma$ is a simple closed loop of $\limo(X_n, e'_n)$.
		Consequently, for every $n \in \N$, there exists an element $Y_n \in \mathcal Y_n$ such that $\phi \circ \gamma$ is contained in $\limo Y_n$.
		By definition of $\phi$, $\gamma$ is contained in $\limo g_n^{-1}Y_n$.
		Recall that $\mathcal Y_n$ is $G$-invariant, thus for every $n \in \N$, $g_n^{-1}Y_n$ belongs to $\mathcal Y_n$.
		\end{enumerate}
	These two points show that $(X_n, e_n)$ is sparsely tree-graded with respect to $(\mathcal Y_n)$.
\end{proof}

\subsection{Hyperbolic spaces}
\label{sec : hyperbolic spaces}

\paragraph{The four point inequality.}
Let $x$, $y$ and $z$ be three points of $X$.
Their \emph{Gromov product} is defined by the following formula.
\begin{equation*}
	\gro xyz = \frac 12 \left(\fantomB \dist xz + \dist yz - \dist xy \right)
\end{equation*}

\begin{defi}
\label{def: hyperbolic space}
	Let $\delta \geq 0$.
	The metric space $X$ is \emph{$\delta$-hyperbolic} if for every $x,y,z,t \in X$,
	\begin{equation}
	\label{eqn: four point condition}
		\gro xzt \geq \min \left\{ \fantomB \gro xyt, \gro yzt\right\}-\delta.
	\end{equation}
\end{defi}

Note that in this definition we did not assume $X$ to be proper or geodesic.
Let $\sigma >0$.
The space $X$ is \emph{$\sigma$-locally $\delta$-hyperbolic} if every ball of radius $\sigma$ is $\delta$-hyperbolic.
When the space $X$ is ``almost'' simply-connected the hyperbolicity actually follows from a local four point condition.

\begin{defi}
\label{def: epsilon simply-connected}
	Let $\epsilon >0$.
	The space $X$ is \emph{$\epsilon$-simply connected} if any based loop is homotopic to a product of loops each of which is freely homotopic to a loop of diameter at most $\epsilon$.
\end{defi}

\begin{theo}[Cartan-Hadamard Theorem] {\rm \cite[Theorem A.1]{Coulon:2013tx}} \quad
\label{res: cartan hadamard}
Let $\delta \geq 0$ and  $\sigma > 10^7\delta$.
Let $X$ be a length space.
If $X$ is $\sigma$-locally $\delta$-hyperbolic and $10^{-5}\sigma$-simply connected then $X$ is (globally) $300\delta$-hyperbolic.
\end{theo}

For the rest of \autoref{sec : hyperbolic spaces} we will assume that $X$ is a $\delta$-hyperbolic length space.

\begin{lemm}
\label{res: four point property}
	 Let $x_1$, $x_2$, $x'_1$ and $x'_2$ be four points of $X$.
	 Let $y \in X$.
	 If
	 \begin{equation*}
	 	\dist {x_1}y > \dist {x_1}{x_2} + \gro {x_1}{x'_1}y + 2\delta \text{ and } \dist {x'_1}y > \dist {x'_1}{x'_2} + \gro {x_1}{x'_1}y + 2\delta,
	 \end{equation*}
 then $\gro {x_2}{x'_2}y \leq \gro {x_1}{x'_1}y  + 2\delta$.
\end{lemm}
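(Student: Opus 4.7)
The plan is to extract two auxiliary Gromov product inequalities from the distance hypotheses, then invoke the four-point condition \eqref{eqn: four point condition} twice to pass from $\gro{x_1}{x_1'}y$ to $\gro{x_2}{x_2'}y$.

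First I would show that the hypothesis $\dist{x_1}y > \dist{x_1}{x_2} + \gro{x_1}{x_1'}y + 2\delta$ forces $\gro{x_1}{x_2}y > \gro{x_1}{x_1'}y + 2\delta$. This is a direct triangle inequality calculation: from $\dist{x_2}y \geq \dist{x_1}y - \dist{x_1}{x_2}$ one obtains
\begin{equation*}
	\gro{x_1}{x_2}y = \tfrac{1}{2}\bigl(\dist{x_1}y + \dist{x_2}y - \dist{x_1}{x_2}\bigr) \geq \dist{x_1}y - \dist{x_1}{x_2} > \gro{x_1}{x_1'}y + 2\delta.
\end{equation*}
The symmetric hypothesis gives $\gro{x_1'}{x_2'}y > \gro{x_1}{x_1'}y + 2\delta$ in exactly the same way.

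Next I would apply the four-point inequality to $x_1, x_2, x_1'$ with basepoint $y$:
\begin{equation*}
	\gro{x_1}{x_1'}y \geq \min\bigl\{\gro{x_1}{x_2}y,\, \gro{x_2}{x_1'}y\bigr\} - \delta.
\end{equation*}
Since $\gro{x_1}{x_2}y$ exceeds $\gro{x_1}{x_1'}y + \delta$, the minimum must be realised by the second term, giving $\gro{x_2}{x_1'}y \leq \gro{x_1}{x_1'}y + \delta$. Then a second application to $x_2, x_1', x_2'$ with basepoint $y$ gives
\begin{equation*}
	\gro{x_2}{x_1'}y \geq \min\bigl\{\gro{x_2}{x_2'}y,\, \gro{x_1'}{x_2'}y\bigr\} - \delta,
\end{equation*}
so $\min\{\gro{x_2}{x_2'}y, \gro{x_1'}{x_2'}y\} \leq \gro{x_1}{x_1'}y + 2\delta$. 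Since we already know $\gro{x_1'}{x_2'}y > \gro{x_1}{x_1'}y + 2\delta$, the minimum must be the first term, yielding $\gro{x_2}{x_2'}y \leq \gro{x_1}{x_1'}y + 2\delta$, as required.

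The argument is essentially mechanical once one has the two auxiliary lower bounds in hand, and I do not anticipate any real obstacle. The only subtle point is the bookkeeping of constants: the $2\delta$ slack in the hypotheses is exactly what is needed so that the large Gromov products strictly exceed $\gro{x_1}{x_1'}y + \delta$ after each use of the four-point inequality, allowing us to force the minimum onto the intended term at each step.
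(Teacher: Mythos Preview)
Your proof is correct and follows essentially the same approach as the paper: both use the triangle inequality to get $\gro{x_1}{x_2}y \geq \dist{x_1}y - \dist{x_1}{x_2}$ (and the primed analogue), then apply the four-point inequality twice to rule out the two ``large'' terms. The paper merely compresses your two step-by-step applications into the single chain inequality $\min\{\gro{x_1}{x_2}y,\gro{x_2}{x_2'}y,\gro{x_1'}{x_2'}y\}\leq\gro{x_1}{x_1'}y+2\delta$, but the content is identical.
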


\begin{proof}
	Applying twice the four point inequality~(\ref{eqn: four point condition}) we get
	\begin{equation}
	\label{eqn: four point property}
		\min \left\{ \gro{x_1}{x_2}y, \gro {x_2}{x'_2}y, \gro{x'_1}{x'_2}y \right\} \leq \gro {x_1}{x'_1}y  + 2\delta
	\end{equation}
	However the triangle inequality gives $\gro{x_1}{x_2}y \geq \dist {x_1}y - \dist{x_1}{x_2}$.
	According to our first assumption on $y$ the minimum in~(\ref{eqn: four point property}) cannot be achieved by $\gro{x_1}{x_2}y$.
	Similarly it cannot achieved by $\gro{x'_1}{x'_2}y$.
	Thus $\gro{x_2}{x'_2}y \leq \gro {x_1}{x'_1}y  + 2\delta$.
\end{proof}

\paragraph{Quasi-geodesics.}
Let $I$ be an interval of $\R$.
Let $\gamma : I \rightarrow X$ be a rectifiable path of $X$ parametrized by arclength.
It is an \emph{$L$-local $(1,l)$-quasi-geodesic} if for every $s,t \in I$ such that $\dist st \leq L$ we have $\dist st \leq \dist{\gamma(s)}{\gamma(t)} + l$.
It is a \emph{$(1,l)$-quasi-geodesic}, if for every $L \geq 0$, $\gamma$ is an $L$-locally $(1,l)$-quasi-geodesic.

\begin{lemm}{\rm \cite[Proposition 2.4]{Coulon:2013tx}} \quad
\label{res: quasi-geodesic - quasi-convex}
	Let $\gamma : I \rightarrow X$ be a $(1,l)$-quasi-geodesic joining two points $y$ and $y'$.
	For every $x \in X$, we have $d(x,\gamma) \leq \gro y{y'}x + l + 3 \delta$.
\end{lemm}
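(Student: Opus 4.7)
The plan is to exhibit an explicit point $z$ on $\gamma$ whose distance to $x$ is directly controllable via the four point inequality. Writing $\alpha = \gro{y}{y'}{x}$, the $\R$-tree model suggests taking $z = \gamma(t_0)$ with $t_0 = \dist{y}{x} - \alpha = \gro{x}{y'}{y}$: in a tree, this point realizes the distance $\alpha$ from $x$, and one expects only an $O(\delta) + O(l)$ error in the hyperbolic quasi-geodesic setting.

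First I would parametrize $\gamma \colon [0,L] \to X$ by arclength. The triangle inequality gives $0 \leq \gro{x}{y'}{y} \leq \dist{y}{y'}$, and since $\gamma$ is parametrized by arclength $\dist{y}{y'} \leq L$, so $t_0 \in [0, L]$ and $z$ is well-defined. Then I would apply the four point inequality~(\ref{eqn: four point condition}) to the quadruple $y, z, y', x$ to obtain
\begin{equation*}
	\alpha = \gro{y}{y'}{x} \geq \min\left\{\fantomB \gro{y}{z}{x},\, \gro{z}{y'}{x}\right\} - \delta,
\end{equation*}
so at least one of the two inner Gromov products is at most $\alpha + \delta$.

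In the first case, expanding the Gromov product and using the arclength estimate $\dist{y}{z} \leq t_0 = \dist{y}{x} - \alpha$ would give $\dist{x}{z} \leq \alpha + 2\delta$. In the second case, I would use the $(1,l)$-quasi-geodesic property $L \leq \dist{y}{y'} + l$, which combined with the arclength bound $\dist{z}{y'} \leq L - t_0$ yields $\dist{z}{y'} \leq \dist{x}{y'} - \alpha + l$; substituting into the expanded Gromov product inequality then gives $\dist{x}{z} \leq \alpha + l + 2\delta$. Either way, $d(x, \gamma) \leq \dist{x}{z} \leq \gro{y}{y'}{x} + l + 3\delta$, which is the stated bound (in fact with a slightly better constant).

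The only nontrivial step is the choice of $z$; once $z$ is fixed, the argument is an algebraic manipulation of the four point condition, very much in the spirit of \autoref{res: four point property}. The one mildly subtle point is that in the second case one must invoke the quasi-geodesic property rather than merely arclength in order to upper bound $\dist{z}{y'}$, and this is precisely where the additive constant $l$ enters the final estimate.
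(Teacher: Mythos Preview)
Your proof is correct. The paper does not provide its own proof of this lemma; it merely cites \cite[Proposition~2.4]{Coulon:2013tx}. Your argument---choosing $z = \gamma(t_0)$ with $t_0 = \gro{x}{y'}{y}$ and then applying the four point inequality to the quadruple $(y,z,y',x)$---is precisely the standard proof of this fact, and in particular is the argument given in the cited reference. As you note, one actually obtains the slightly sharper bound $d(x,\gamma) \leq \gro{y}{y'}{x} + l + 2\delta$.
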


One important feature of hyperbolic spaces is the stability of quasi-geodesics recalled below.

\begin{prop}{\rm \cite[Corollary 2.6]{Coulon:2013tx}} \quad
\label{res: stability quasi-geodesics}
	Let $l \geq 0$.
	There exists $L >0$ depending only on $l$ and $\delta$ with the following property.
	Let $\gamma$ and $\gamma'$ be two $L$-local $(1,l)$-quasi-geodesics of $X$.
	If they have the same endpoints then the Haussdorf distance between them is at most $2l + 5\delta$.
\end{prop}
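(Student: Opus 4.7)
The plan is to reduce to the case of global quasi-geodesics and then apply Lemma~\ref{res: quasi-geodesic - quasi-convex}. The core technical step is a local-to-global principle for quasi-geodesics in a $\delta$-hyperbolic length space.

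First I would prove that there exists $L = L(l,\delta)$ such that every $L$-local $(1,l)$-quasi-geodesic $\gamma : [a,b] \to X$ is in fact a global $(1,l')$-quasi-geodesic for some $l' = l'(l,\delta)$ only slightly worse than $l$. The standard approach is by induction on the length: given $s < t$ in $[a,b]$, subdivide $[s,t]$ into consecutive subintervals $s = s_0 < s_1 < \dots < s_k = t$ with $s_{i+1}-s_i \leq L$. Let $y = \gamma(s)$, $y' = \gamma(t)$, and $x_i = \gamma(s_i)$. On each subinterval the $L$-local hypothesis gives $\dist{x_i}{x_{i+1}} \geq (s_{i+1}-s_i) - l$. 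To conclude that $\dist{y}{y'} \geq (t-s) - l'$ one has to control how the Gromov products $\gro{x_{i-1}}{x_{i+1}}{x_i}$ accumulate: because each subarc is itself a $(1,l)$-quasi-geodesic of length comparable to $L \gg \delta$, Lemma~\ref{res: quasi-geodesic - quasi-convex} applied with $x = x_i$ gives $\gro{x_{i-1}}{x_{i+1}}{x_i} \leq l + 3\delta$, which together with the four-point inequality (Lemma~\ref{res: four point property}) allows one to propagate a bounded Gromov product along the concatenation. Choosing $L$ large in terms of $l$ and $\delta$ keeps $l'$ within a small additive constant of $l$.

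Once $\gamma$ and $\gamma'$ are known to be global $(1,l')$-quasi-geodesics with the same endpoints $y, y'$, the stability estimate follows cleanly. Given $x \in \gamma'$, split $\gamma'$ at $x$ into two subpaths, each still a $(1,l')$-quasi-geodesic, so that
\begin{equation*}
  \dist{y}{x} + \dist{x}{y'} \leq L(\gamma'|_{[y,x]}) + L(\gamma'|_{[x,y']}) \leq \dist{y}{y'} + l'.
\end{equation*}
Hence $\gro{y}{y'}{x} \leq l'/2$. Applying Lemma~\ref{res: quasi-geodesic - quasi-convex} to the $(1,l')$-quasi-geodesic $\gamma$ yields
\begin{equation*}
  \dist{x}{\gamma} \leq \gro{y}{y'}{x} + l' + 3\delta \leq \tfrac{3}{2}l' + 3\delta.
\end{equation*}
Swapping the roles of $\gamma$ and $\gamma'$ gives the symmetric estimate, and a careful tracking of constants in the local-to-global step should produce $l' \leq l + O(\delta)$, yielding the announced bound $2l + 5\delta$ on the Hausdorff distance.

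The main obstacle is the first step, the local-to-global passage: one must show not only that $L$-local quasi-geodesics with $L$ large enough \emph{are} global quasi-geodesics, but that the defect $l'$ does not inflate relative to $l$. The delicate bookkeeping is in arguing that the accumulated Gromov-product errors at the junctions $x_i$ remain uniformly bounded rather than growing linearly in the number of subintervals; this is precisely where the $\delta$-hyperbolicity of $X$, via the four-point inequality, is essential, and where the threshold $L \gg \delta, l$ enters.
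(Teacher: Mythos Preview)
The paper does not prove this proposition: it is quoted verbatim as \cite[Corollary 2.6]{Coulon:2013tx} and no argument is given in the present text. So there is no ``paper's own proof'' to compare your proposal against.

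That said, your outline is the standard route and matches what one finds in the cited source: a local-to-global step showing that $L$-local $(1,l)$-quasi-geodesics are global $(1,l')$-quasi-geodesics for $L$ large in terms of $l$ and $\delta$, followed by the fellow-traveller estimate via Lemma~\ref{res: quasi-geodesic - quasi-convex}. One remark on the bookkeeping: your final computation gives $d(x,\gamma) \leq \tfrac{3}{2}l' + 3\delta$, and to land on the stated bound $2l+5\delta$ you indeed need $l' \leq l + O(\delta)$ with small implied constant. The local-to-global argument in \cite{Coulon:2013tx} does achieve this (the additive defect stabilises rather than grows, precisely because the Gromov products at the junctions are bounded by $l/2+\text{const}\cdot\delta$ and hyperbolicity prevents accumulation), but it requires more care than your sketch suggests; in particular the inductive step is not simply a concatenation over a fixed partition but rather a doubling/overlapping argument. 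If you want to turn this into a self-contained proof you should spell that step out; as written, the ``main obstacle'' you flag is real and your sketch of it is too loose to guarantee the specific constant $2l+5\delta$.
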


\paragraph{The boundary at infinity.}
Let $e$ be a base point of $X$.
A sequence $(y_n)$ of points of $X$ \emph{converges to infinity} if $\gro {y_n}{y_m}e$ tends to infinity as $n$ and $m$ approach infinity.
The set $\mathcal S$ of such sequences is endowed with a binary relation defined as follows.
Two sequences $(y_n)$ and $(z_n)$ are related if
\begin{displaymath}
	\lim_{n \rightarrow + \infty} \gro {y_n}{z_n}e = + \infty.
\end{displaymath}
If follows from (\ref{eqn: four point condition}) that this relation is actually an equivalence relation.
The boundary at infinity of $X$ denoted by $\partial X$ is the quotient of $\mathcal S$ by this relation.
If the sequence $(y_n)$ is an element in the class of $\xi \in \partial X$ we say that $(y_n)$  \emph{converges} to $\xi$ and  write
\begin{displaymath}
	\lim_{n \rightarrow + \infty} y_n = \xi.
\end{displaymath}
Note that the definition of $\partial X$ does not depend on the base point $e$.
If $Y$ is a subset of $X$ we denote by $\partial Y$ the set of elements of $\partial X$ which are the limit of a sequence of points of $Y$.

\paragraph{Isometries.}
The isometries of $X$ can be sort into three categories \cite[Chapitre 9, Th\'eor\`eme 2.1]{CooDelPap90}.
An isometry $g$ of $X$ is
\begin{enumerate}
	\item \emph{elliptic}, if one (and hence all) orbit of $g$ is bounded.
	\item \emph{parabolic}, if one (and hence all) orbit of $g$ admits a unique accumulation point in $\partial X$.
	\item \emph{hyperbolic}, if one (and hence all) orbit of $g$ admits exactly two accumulation points in $\partial X$.
\end{enumerate}
In order to measure the action of an isometry $g$ on $X$ we define the \emph{translation length} $\len g$ and the \emph{stable translation length} $\len[stable] g$.
\begin{equation*}
	\len g = \inf_{x \in X} \dist {gx}x, \quad \len[stable] g = \lim_{n \rightarrow + \infty} \frac 1n \dist{g^nx}x.
\end{equation*}
An isometry of $X$ is hyperbolic if and only if its stable translation length is positive \cite[Chapitre 10, Proposition 6.3]{CooDelPap90}.
The translation lengths are related according to the following proposition.
\begin{prop}{\rm  \cite[Chapitre 10, Proposition 6.4]{CooDelPap90}}
\label{res: translation lenghts}
	Let $g$ be an isometry of $X$.
	Its translation lengths satisfy
	\begin{equation*}
		\len[stable] g \leq \len g \leq \len[stable] g + 16 \delta.
	\end{equation*}
\end{prop}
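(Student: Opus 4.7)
The first inequality $\len[stable] g \le \len g$ is immediate from subadditivity. For any $x \in X$ and $n \ge 1$, the triangle inequality applied along the chain $x, gx, \ldots, g^n x$ gives $\dist{g^n x}{x} \le n\,\dist{gx}{x}$. Dividing by $n$ and sending $n \to \infty$ yields $\len[stable] g \le \dist{gx}{x}$; taking the infimum over $x$ gives the claim.

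For the second inequality $\len g \le \len[stable] g + 16\delta$, set $\tau = \len[stable] g$ and fix a base point $o \in X$. Since $\dist{g^n o}{o}/n \to \tau$, for every $\eta > 0$ and every $N$ sufficiently large we have $\dist{g^{-N}o}{g^{N}o} = \dist{g^{2N}o}{o} \le 2N\tau + \eta$. The plan is to exhibit a single point with displacement close to $\tau$. Because $X$ is a length space, one may choose $z$ to be a near-midpoint of a geodesic from $g^{-N}o$ to $g^{N}o$; its image $gz$ is then a near-midpoint of $g^{-N+1}o$ and $g^{N+1}o$. By construction both Gromov products $\gro{g^{-N}o}{g^{N}o}{z}$ and $\gro{g^{-N+1}o}{g^{N+1}o}{gz}$ are of order $\eta$. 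The core estimate applies the four point inequality (Lemma \ref{res: four point property}) to transfer these small Gromov products from the ``outer'' endpoints of the orbit inward to the ``local'' configuration around $z$ and $gz$. Each application costs at most $2\delta$; combining a bounded number of such transfers with the near-geodesic behavior of $z$ and $gz$ on the geodesics they almost bisect yields an estimate of the shape $\dist{gz}{z} \le \dist{g^{2N+1}o}{o}/(2N+1) + 16\delta + O(\eta)$. Passing to the limit $N \to \infty$ and $\eta \to 0$ delivers $\len g \le \tau + 16\delta$.

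The main obstacle is the precise bookkeeping of the $\delta$-corrections required to land at exactly $16\delta$ rather than a larger multiple: one must control which Gromov products are transferred and in which order, and verify that only a uniformly bounded number of applications of Lemma \ref{res: four point property} are needed, each contributing $2\delta$. The degenerate cases $\tau = 0$ fit into the same framework but admit a more direct treatment: for elliptic $g$ a quasi-circumcenter of any bounded orbit has displacement $O(\delta)$, while for parabolic $g$ Busemann functions based at the parabolic fixed point in $\partial X$ supply points with displacement tending to $0$.
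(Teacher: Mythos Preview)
The paper does not prove this proposition at all: it is quoted verbatim from \cite[Chapitre 10, Proposition 6.4]{CooDelPap90} and used as a black box. So there is no ``paper's own proof'' to compare against; what follows is an assessment of your sketch on its own merits.

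Your argument for the first inequality is correct and standard.

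For the second inequality, your midpoint idea is sound in principle---in a tree the midpoint of $[g^{-N}o,g^{N}o]$ lands exactly on the axis and is displaced by precisely $\tau$, and in a $\delta$-hyperbolic space one expects an $O(\delta)$ error. But what you have written is a plan, not a proof. You invoke Lemma~\ref{res: four point property} without checking its hypotheses (which require strict lower bounds on certain distances that you never establish), you assert ``an estimate of the shape $\dist{gz}{z}\le \dist{g^{2N+1}o}{o}/(2N+1)+16\delta+O(\eta)$'' without deriving it, and you yourself identify the constant $16\delta$ as ``the main obstacle'' still to be overcome. As it stands the argument does not yield any explicit constant, let alone $16$. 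The separate treatment of the elliptic and parabolic cases at the end is also superfluous: the inequality is a uniform metric statement and the classical proof does not split into cases.

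For comparison, the argument in \cite{CooDelPap90} proceeds differently and more directly: one picks $x$ nearly realising $\len g$ and observes that the Gromov product $\gro{g^{-1}x}{gx}{x}$ must then be bounded by a fixed multiple of $\delta$ (otherwise a point on $[g^{-1}x,gx]$ closer to that product would have strictly smaller displacement). This smallness forces the orbit $(g^k x)$ to be a uniform quasi-geodesic, from which $\dist{g^n x}{x}\ge n\,\dist{gx}{x}-16\delta$ for all $n$, and dividing by $n$ gives the bound. If you want to salvage your midpoint approach you will need to carry out the thin-quadrilateral estimate explicitly rather than gesture at it.
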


\paragraph{Proper geodesic spaces.}
In this paragraph we assume that in addition to being $\delta$-hyperbolic, $X$ is also proper and geodesic.
Let $\rho : [0, + \infty) \rightarrow X$ be a geodesic ray.
There exists a point $\xi \in \partial X$ such that for every sequence of real numbers $(t_n)$  diverging to infinity, $\lim_{n \rightarrow + \infty} \rho(t_n) = \xi$.
In this situation we consider $\xi$ as an endpoint at infinity of $\rho$ and write $\lim_{t \rightarrow + \infty}\rho(t) = \xi$.
For every pair of distinct points $x,x' \in X \cup \partial X$ there exists an (eventually infinite) geodesic $\gamma$ joining $x$ to $x'$ \cite[Chapitre 2, Proposition 2.1]{CooDelPap90}.

\paragraph{}
Since $X$ is proper and geodesic, $\partial X$ is in one-to-one correspondence with the quotient of the set of rays starting at a given base point $e$ by the equivalence relation that identifies two rays at finite Hausdorff distance \cite[Chapitre 2, Proposition 3.1]{CooDelPap90}.
Therefore $X \cup \partial X$ inherits the topology of uniform convergence on bounded sets.
Moreover $X \cup \partial X$ is compact for this topology \cite[Chapitre 2, Proposition 3.2]{CooDelPap90}.

\paragraph{}
Let $\xi \in \partial X$ and $\rho : [0, + \infty) \rightarrow X$ be a geodesic ray such that $\lim_{t \rightarrow + \infty} \rho(t) = \xi$.
We associate to $\rho$ a function $h : X \rightarrow \R$ defined by
\begin{equation*}
	h(x) = \limsup_{t \rightarrow + \infty} \dist x{\rho(t)} - t.
\end{equation*}
Such a map is called a \emph{Buseman function about the point $\xi$}.
\begin{defi}
\label{def: horoball}
	Let $\xi \in \partial X$.
	A subset $Y$ of $X$ is a \emph{horoball centered at $\xi$} if there exists a Buseman function $h$ about $\xi$ and a constant $\alpha \geq 0$ such that
 	for every $x \in Y$, $h(x) \leq \alpha$ and for every $x \in X \setminus Y$, $h(x) \geq - \alpha$.
\end{defi}

\paragraph{Relatively hyperbolic groups.}
Let $G$ be a group acting properly on a proper geodesic hyperbolic metric space $X$.
By properly we mean that for every $x\in X$ there exists a positive number $r$ such that the set $\set{g}{g\ball xr \cap \ball xr \neq \emptyset}$ is finite.
A subgroup $H$ of $G$ is called a \emph{parabolic subgroup} if $H$ is infinite and $H$ contains no hyperbolic element.
In this case $H$ has a unique fixed point $\xi\in\partial X$, called a \emph{parabolic point}. If $\xi$ is a parabolic point, $\stab \xi$ is a maximal parabolic subgroup.

\begin{defi}{\rm \cite[Definition 3.3]{Hruska:2010iw}} \quad
\label{def: relatively hyperbolic}
	Let $G$ be a group and $\{ H_1, \dots, H_m\}$ be a collection of subgroups of $G$.
	We say that $G$ is \emph{hyperbolic relative to} $\{H_1, \dots, H_m\}$ if there exists a proper geodesic hyperbolic space $X$ and a collection $\mathcal Y$ of disjoint open horoballs satisfying the following properties.
	\begin{enumerate}
		\item $G$ acts properly by isometries on $X$ and $\mathcal Y$ is $G$-invariant.
		\item If $U$ stands for the union of the horoballs of $\mathcal Y$ then $G$ acts co-compactly on $X \setminus U$.
		\item $\{H_1, \dots, H_m\}$ is a set of representatives of the $G$-orbits of $\set{\stab Y}{Y \in \mathcal Y}$.
	\end{enumerate}
\end{defi}

It follows from this definition that for every $j \in \intvald 1m$, the conjugates of $H_j$ are maximal parabolic subgroups for the action of $G$ on $X$.

\subsection{Statement of the main theorems}

\begin{defi}
\label{def: rectifiable path connected}
Let $Y$ be subset of a metric space $X$.
We say that $Y$ is \emph{rectifiably path connected} if every two points of $Y$ can be joined by a rectifiable path contained in $Y$.
In this situation we denote by $\distV[Y]$ the induced length metric of $Y$ obtained by restricting $\distV[X]$ to $Y$.
If in addition there exists $\epsilon>0$ such that for every $y \in Y$, the natural embedding $Y \hookrightarrow X$ induces an isometry from $\ball y\epsilon$ onto its image, then we say that $Y$ is \emph{locally undistorted}.
\end{defi}

\begin{defi}
\label{def: relatively simply-connected}
	Let $\epsilon >0$.
	Let $\mathcal Y$ be a collection of subspaces of a metric space $X$.
	We say that $X$ is \emph{$\epsilon$-simply connected relative to $\mathcal Y$} if any based loop is homotopic to a product of loops $\gamma_1 \cdot \gamma_2 \cdots \gamma_m$ such that for every $m \in \intvald 1m$, $\gamma_i$ is freely homotopic to a loop which has either diameter bounded above by $\epsilon$ or is contained in one of the subsets of $\mathcal Y$.
\end{defi}

\paragraph{}\autoref{res: intro - main theorem} is a particular case of the following general result.

\begin{theo}
\label{res: main theorem geometric form}
	Let $G$ a group acting properly co-compactly by isometries on a proper length space $X$.
	Let $\mathcal Y$ be a $G$-invariant collection of closed locally undistorted subsets of $X$ such that $X$ is $\epsilon$-simply-connected relative to $\mathcal Y$ for some $\epsilon >0$  and $\mathcal Y/G$ is finite.
	We identify $\mathcal Y/G$ with a set of representatives of the $G$-orbits of the elements of $\mathcal Y$.
	If $X$ is sparsely asymptotically tree-graded with respect to $\mathcal Y$, then $G$ is hyperbolic relative to $\set{\stab Y}{Y \in \mathcal Y/G}$.
\end{theo}

\paragraph{}The proof of this statement has two main steps.
First we will focus on a slightly weaker form of \autoref{res: main theorem geometric form}.
This second statement (see \autoref{res: main theorem weak geometric form}) involves an additional assumption about the behavior of $\mathcal Y$ with respect to an ultra-filter as detailed below.
In \autoref{sec: spaces with a tree graded asymptotic cone}, we explain how the collection $\mathcal Y$ can be substituted  without loss of generality for an other family satisfying our additional hypothesis

\begin{defi}
\label{def: distortion omega controlled}
	Let $(X_n,e_n)$ be a sequence of pointed length spaces.
For every $n \in \N$, let $\mathcal Y_n$ be a collection of rectifiably path connected subsets of $X_n$.
Let $\omega$ be a non-principal ultra-filter.
	We say that the distortion of the sequence $(\mathcal Y_n)$ is \emph{$\omega$-controlled} if for every sequence $(Y_n) \in \Pi_{n \in \N} \mathcal Y_n$, for every $(y_n), (y'_n) \in \Pi_{n \in \N} Y_n$, we have
	\begin{displaymath}
		\limo \dist[X_n]{y_n}{y'_n} = \limo \dist[Y_n]{y_n}{y'_n}.
	\end{displaymath}
\end{defi}

\begin{theo}
\label{res: main theorem weak geometric form}
	Let $G$ be a group.
	Let $\epsilon >0$.
	Assume that for every $n \in \N$, we are given
	\begin{enumerate}
		\item a pointed proper length space $(X_n,e_n)$ on which $G$ acts properly co-compactly by isometries such that the diameter of $X_n/G$ is uniformly bounded.
		\item a $G$-invariant collection $\mathcal Y_n$ of closed unbounded locally undistorted subsets of $X_n$ such that $X_n$ is $\epsilon$-simply-connected relative to $\mathcal Y_n$ and $\mathcal Y_n/G$ is finite.
	\end{enumerate}
	Let $\omega$ be a non-principal ultra-filter.
	Assume that the distortion of $(\mathcal Y_n)$ is $\omega$-controlled and that $(X_n,e_n)$ is sparsely tree-graded with respect to $(\mathcal Y_n)$.
	Then there exists a subset $A$ of $\N$ with $\omega(A) = 1$ such that for every $n \in A$, $G$ is hyperbolic relative to $\set{\stab Y}{Y \in \mathcal Y_n/G}$ where $\mathcal Y_n/G$ is identified with a set of representatives of the $G$-orbits of the elements of $\mathcal Y_n$.
\end{theo}


%
%

\section{Cone-off over a metric space.}
\label{sec: cone-off}


\subsection{Cone modelled on a horoball.}
\label{sec: horocone}
In this section $Y$ denotes a metric space.

\begin{defi}
\label{def: horocone}
	The \emph{horocone over $Y$} denoted by $Z(Y)$ is the space $Y \times \R_+$ endowed with the metric characterized as follows.
	For every $x_1 = (y_1,r_1)$, and $x_2 = (y_2,r_2)$ in $Z(Y)$,
	\begin{equation}
	\label{eqn: def metric horocone}
		\cosh\dist {x_2}{x_1} = \cosh(r_2 - r_1) + \frac 12 e^{-(r_1+r_2)} {\dist {y_2}{y_1}}^2.
	\end{equation}
\end{defi}

\paragraph{Geometric interpretation.} The distance in the space $Z(Y)$ can be seen in the following way.
Let us denote by $\mathcal H$ the upper-half plane model of the hyperbolic plane $\H_2$.
\begin{displaymath}
	\mathcal H = \set {(u, v) \in \R^2}{v>0}, \quad ds^2 = \frac {du^2 + dv^2}{v^2}.
\end{displaymath}
Let $x_1 = (y_1,r_1)$, and $x_2 = (y_2,r_2)$ be two points of $Z(Y)$.
Let $u_0 \in \R$.
We consider comparison points in $\mathcal H$.
Let $\tilde y_1$ and $\tilde y_2$ be the points of $\mathcal H$ with respective coordinates $(u_0,1)$ and $(u_0 + \dist {y_2}{y_1},1)$.
The points $\tilde x_1$ and $\tilde x_2$ are given by $\tilde x_1 = (u_0, e^{r_1})$ and $\tilde x_2 = (u_0 +\dist {y_2}{y_1}, e^{r_2})$ (see \autoref{fig: horocone - geometric interpretation}).
Thus $\dist[\mathcal H] {\tilde y_i}{\tilde x_i} = r_i$.
The distance $\dist {x_2}{x_1}$ is exactly the distance in $\mathcal H$ between $\tilde x_1$ and $\tilde x_2$.
It is easy to check from (\ref{eqn: def metric horocone}) that the metric of $Z(Y)$ is positive and symmetric.
The triangle inequality follows from the geometric interpretation.

\begin{figure}[htbp]
\centering
	\includegraphics{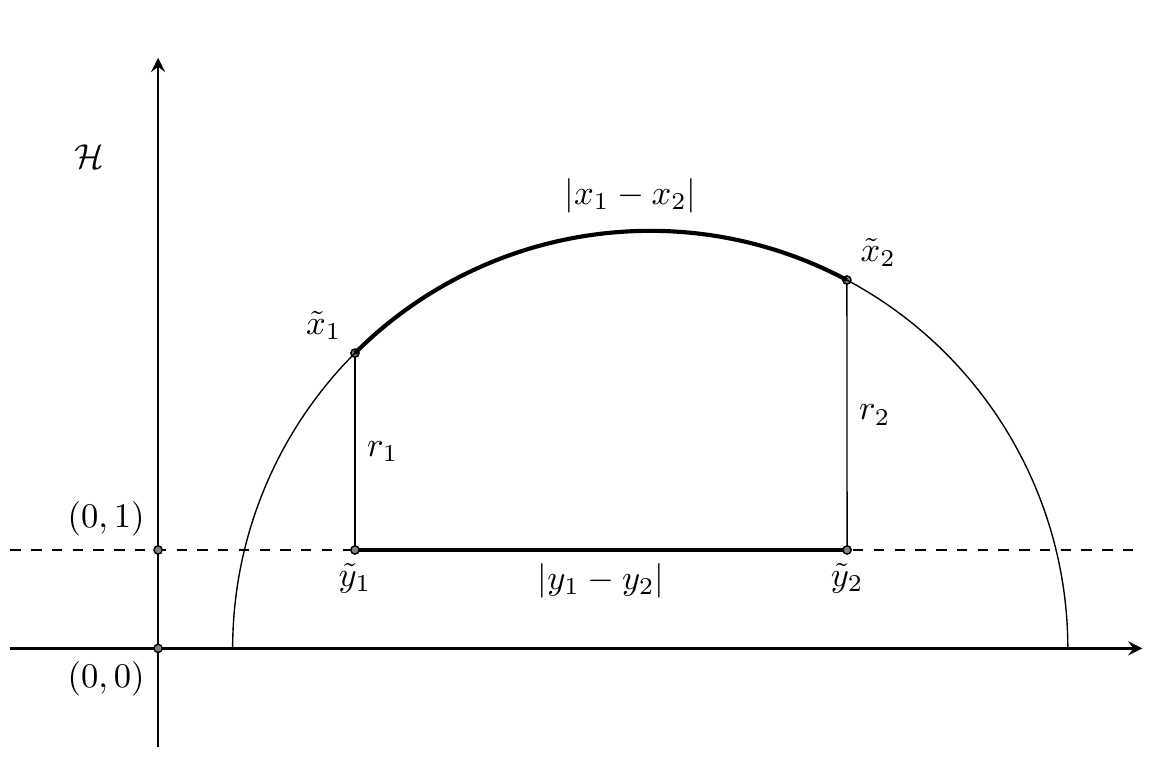}
\caption{Distance in a horocone: geometric interpretation.}
\label{fig: horocone - geometric interpretation}
\end{figure}

\paragraph{}
We denote by $\iota$ the natural map from $Y$ into $Z(Y)$ defined by $\iota(y) = (y,0)$.
On the other hand, the \emph{radial projection} $p : Z(Y) \rightarrow Y$ is the map which sends $x = (y,r)$ to $y$.
For every $y_1,y_2 \in Y$, we have
\begin{displaymath}
	\dist{\iota(y_2)}{\iota(y_1)} = \mu \left( \dist {y_2}{y_1}\right).
\end{displaymath}
where $\mu : \R_+ \rightarrow \R_+$ is the map satisfying
\begin{displaymath}
	\forall u \in \R_+, \quad \cosh(\mu(u)) = 1 + \frac 12 u^2.
\end{displaymath}
In particular, for every $u \in \R_+$, $u = 2 \sinh(\mu(u)/2)$.

\begin{lemm}
\label{res: function mu}
	The function $\mu$ is non-decreasing, concave, subadditive and 1-Lipschitz.
Moreover, for every $u \in \R_+$,
	\begin{displaymath}
		\mu(u) \geq u- \frac 1{12}u^3.
	\end{displaymath}
\end{lemm}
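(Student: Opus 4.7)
The plan is to exploit the implicit definition $\cosh(\mu(u))=1+u^2/2$ (equivalently $u=2\sinh(\mu(u)/2)$) to compute $\mu'$ in closed form, and then read off every property of $\mu$ from that single formula.

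First I would differentiate the defining equation: $\sinh(\mu(u))\,\mu'(u)=u$, and since $\sinh^2(\mu)=\cosh^2(\mu)-1=(1+u^2/2)^2-1=u^2(1+u^2/4)$, I obtain the explicit expression
\begin{displaymath}
\mu'(u)=\frac{1}{\sqrt{1+u^2/4}}.
\end{displaymath}
From this, three of the four claimed properties are immediate: $\mu'>0$ gives monotonicity, $|\mu'|\le 1$ gives the $1$-Lipschitz property, and the fact that $u\mapsto 1/\sqrt{1+u^2/4}$ is (strictly) decreasing on $\R_+$ gives concavity.

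For subadditivity I would invoke the standard fact that a concave function $f$ on $\R_+$ with $f(0)\ge 0$ is automatically subadditive: writing $a=\tfrac{a}{a+b}(a+b)+\tfrac{b}{a+b}\cdot 0$ and similarly for $b$, concavity yields $\mu(a)+\mu(b)\ge\mu(a+b)$. Here $\mu(0)=0$ from the defining equation, so this applies.

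The only genuinely computational step is the Taylor-type lower bound $\mu(u)\ge u-u^3/12$. I would set $g(u)=\mu(u)-u+u^3/12$; since $g(0)=0$ it suffices to show $g'(u)\ge 0$, i.e.\
\begin{displaymath}
\frac{1}{\sqrt{1+u^2/4}}\ge 1-\frac{u^2}{4}.
\end{displaymath}
Letting $w=u^2/4$, the inequality is trivial when $w\ge 1$ (right-hand side non-positive) and, for $w<1$, squaring and clearing denominators reduces it to $w^2-w\le 1$, which is immediate since $w(w-1)<0$ on $(0,1)$. This is the only step requiring care, but it is elementary; no substantial obstacle is anticipated anywhere in the argument.
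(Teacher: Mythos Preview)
Your proof is correct and complete. The paper itself does not prove this lemma (it simply says ``The proof is left to the reader''), so there is nothing to compare against; your approach via the closed form $\mu'(u)=1/\sqrt{1+u^2/4}$ is exactly the natural one, and each step---monotonicity, the Lipschitz bound, concavity, subadditivity from concavity with $\mu(0)=0$, and the elementary inequality $1/\sqrt{1+w}\ge 1-w$ for the Taylor-type lower bound---is valid as written.
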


\rem Using the vocabulary introduced by the first author in \cite[Section 5]{Coulon:2013tx} it means that $\mu$ is an $a$-comparison map with $a = 1/12$.

\begin{proof}
	The proof is left to the reader.
\end{proof}

\begin{lemm}
\label{res: horocone - projection}
	Let $x_1=(y_1,0)$ and $x_2=(y_2,r_2)$ be two points of $Z(Y)$.
	If $\dist {x_2}{x_1} \leq d$ then $\dist {p(x_2)}{p(x_1)} \leq 2e^{d/2}\sinh(d/2)$.
\end{lemm}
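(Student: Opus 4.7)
The plan is to read off the inequality directly from the defining formula \eqref{eqn: def metric horocone} of the horocone metric and to estimate the term $\cosh(r_2) + \tfrac12 e^{-r_2}\dist{y_1}{y_2}^2$ from below in the crudest useful way. Since $r_1 = 0$, assuming $\dist{x_1}{x_2} \leq d$ gives
\begin{equation*}
	\cosh(d) \;\geq\; \cosh\!\bigl(\dist{x_1}{x_2}\bigr) \;=\; \cosh(r_2) + \tfrac12 e^{-r_2}\dist{y_1}{y_2}^2.
\end{equation*}
This single identity already contains everything we need; the only question is how tightly to estimate it.

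First I would note that since $\cosh$ is increasing on $\R_+$, the inequality above forces $r_2 \leq d$. Then I would use the trivial bound $\cosh(r_2) \geq 1$, which rearranges to
\begin{equation*}
	\dist{y_1}{y_2}^2 \;\leq\; 2 e^{r_2}\bigl(\cosh(d)-1\bigr) \;=\; 4 e^{r_2} \sinh^2(d/2),
\end{equation*}
using the half-angle identity $\cosh(d) - 1 = 2\sinh^2(d/2)$. Applying $r_2 \leq d$ yields $\dist{y_1}{y_2}^2 \leq 4 e^{d}\sinh^2(d/2)$, and taking square roots gives the desired $\dist{p(x_1)}{p(x_2)} = \dist{y_1}{y_2} \leq 2 e^{d/2}\sinh(d/2)$.

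There is no real obstacle; the only judgement call is whether to keep the sharper bound obtained by retaining $\cosh(r_2)$ and optimising over $r_2 \in [0,d]$ (which would produce $\sinh(d)$) or to use the loose bound $\cosh(r_2) \geq 1$. Since the statement only asks for $2e^{d/2}\sinh(d/2)$, the looser route is cleaner and entirely sufficient, so I would present the short two-line computation above without invoking any optimisation or additional lemma.
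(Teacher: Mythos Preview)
Your proof is correct and follows essentially the same route as the paper: both arguments extract $r_2 \leq d$ from the metric formula, drop $\cosh(r_2)$ to $1$, apply the half-angle identity $\cosh d - 1 = 2\sinh^2(d/2)$, and then use $e^{-r_2} \geq e^{-d}$ before taking square roots. The only cosmetic difference is that the paper keeps $\cosh\dist{x_1}{x_2}$ in the intermediate bound rather than immediately replacing it by $\cosh d$, but the logic is identical.
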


\begin{proof}
	By definition of the metric of $Z(Y)$ we get that $r_2 \leq d$.
	Using again (\ref{eqn: def metric horocone}) we obtain
	\begin{equation*}
		e^{-d}\dist{p(x_2)}{p(x_1)}^2
		\leq e^{-r_2}\dist{y_2}{y_1}^2
		\leq 2\left(\cosh\dist {x_2}{x_1}-1\right) = 4 \sinh^2\left( \frac{\dist {x_2}{x_1}}2\right),
	\end{equation*}
	which provides the result.
\end{proof}

\begin{prop}
\label{res:  lifting path in cone}
	Let $x_1 = (y_1,r_1)$ and $x_2=(y_2,r_2)$ be two points of the cone $Z(Y)$.
	Let $\gamma : \intval {a_1}{a_2} \rightarrow Y$ be a rectifiable path joining $y_1$ to $y_2$.
	There exists a continuous map $r : \intval {a_1}{a_2} \rightarrow \R_+$ satisfying the following properties.
	\begin{enumerate}
		\item \label{enu: lifting path in cone - radius}
		For every $t \in \intval {a_1}{a_2}$, $r(t) \geq \min\{r_1,r_2\}$.
		\item \label{enu: lifting path in cone - length}
		Let $\nu : \intval {a_1}{a_2} \rightarrow Z(Y)$ be the path of $Z(Y)$ defined by $\nu(t) = (\gamma(t), r(t))$.
		It is a rectifiable path joining $x_1$ and $x_2$.
		Morevover for every $s,t \in \intval {a_1}{a_2}$ the lengths of $\gamma$ and $\nu$ restricted to $\intval st$ are related by
		\begin{equation*}
			\cosh\left(L\left(\restriction{\nu}{\intval st}\right)\right)
			\leq \cosh\left( r(s) - r(t)\right) + \frac 12e^{-(r(s)+r(t))}L\left(\restriction\gamma{\intval st}\right)^2.
		\end{equation*}
		\item  \label{enu: lifting path in cone - quasi-geodesic}
		Let $l \geq 0$.
		If $\gamma$ is a $(1,l)$-quasi-geodesic of $Y$ then $\nu$ is a $(1,l)$-quasi-geodesic of $Z(Y)$.
	\end{enumerate}
\end{prop}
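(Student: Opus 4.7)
The plan is to build $\nu$ by lifting $\gamma$ so that its profile in the horocone $Z(Y)$ traces the comparison geodesic in the hyperbolic plane. Let $\sigma : \intval{a_1}{a_2} \to \intval{0}{L}$ denote the arclength function of $\gamma$, with $L = L(\gamma)$. In the upper half-plane model $\mathcal H$, the hyperbolic geodesic joining $\tilde x_1 = (0, e^{r_1})$ to $\tilde x_2 = (L, e^{r_2})$ is a circular arc orthogonal to the real axis whose $u$-coordinate runs monotonically from $0$ to $L$; I parametrize it as $u \mapsto (u, v(u))$. I would then set $r(t) = \log v(\sigma(t))$ and $\nu(t) = (\gamma(t), r(t))$; continuity of $r$ follows from the continuity of $\sigma$, $v$ and $\log$.

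Property~(1) is a direct geometric observation: along the semicircular arc, $v$ is either strictly monotone, or rises to an apex and then descends; in both cases its minimum on $\intval{0}{L}$ is attained at an endpoint, hence $r(t) \geq \min\{r_1, r_2\}$. For property~(2), introduce the auxiliary planar path $\tilde\nu(u) = (u, v(u))$; by construction it traces a hyperbolic geodesic, so its length on any subinterval equals the hyperbolic distance between its endpoints. For any partition $s = s_0 \leq \dots \leq s_k = t$ of $\intval{s}{t}$, combining $\dist[Y]{\gamma(s_i)}{\gamma(s_{i+1})} \leq \sigma(s_{i+1}) - \sigma(s_i)$ with the defining formula of the metric on $Z(Y)$ yields
\begin{equation*}
\dist[Z(Y)]{\nu(s_i)}{\nu(s_{i+1})} \leq \dist[\mathcal H]{\tilde\nu(\sigma(s_i))}{\tilde\nu(\sigma(s_{i+1}))}.
\end{equation*}
Summing and passing to the supremum over partitions gives $L(\nu|_{\intval{s}{t}}) \leq \dist[\mathcal H]{\tilde\nu(\sigma(s))}{\tilde\nu(\sigma(t))}$; applying $\cosh$ and substituting $\sigma(t) - \sigma(s) = L(\gamma|_{\intval{s}{t}})$ into the hyperbolic distance formula produces the claimed inequality, and in particular shows that $\nu$ is rectifiable.

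For property~(3), suppose $\gamma$ is a $(1,l)$-quasi-geodesic, so that $\sigma(t) - \sigma(s) \leq \dist[Y]{\gamma(s)}{\gamma(t)} + l$. Introduce in $\mathcal H$ the three points $\tilde p = (0, e^{r(s)})$, $\tilde q = (\dist[Y]{\gamma(s)}{\gamma(t)}, e^{r(t)})$ and $\tilde q' = (\sigma(t) - \sigma(s), e^{r(t)})$. The definition of the metric on $Z(Y)$ gives $\dist[\mathcal H]{\tilde p}{\tilde q} = \dist[Z(Y)]{\nu(s)}{\nu(t)}$, while the argument used for property~(2) yields $L(\nu|_{\intval{s}{t}}) \leq \dist[\mathcal H]{\tilde p}{\tilde q'}$. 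Since $\tilde q$ and $\tilde q'$ share the $v$-coordinate $e^{r(t)} \geq 1$ and their $u$-coordinates differ by at most $l$, a direct computation in $\mathcal H$ gives $\dist[\mathcal H]{\tilde q}{\tilde q'} \leq l$. The triangle inequality then yields $L(\nu|_{\intval{s}{t}}) \leq \dist[Z(Y)]{\nu(s)}{\nu(t)} + l$, which after reparametrizing $\nu$ by arclength is the required $(1,l)$-quasi-geodesic property. The delicate point is precisely this reparametrization: the lift $\nu$ inherited from $\gamma$ is generally not arclength-parametrized in $Z(Y)$, so one must reparametrize and verify that the additive constant $l$ is not worsened, which is where the two-point comparison in $\mathcal H$ above does the real work.
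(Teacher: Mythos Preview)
Your argument is correct and follows the same overall strategy as the paper: lift $\gamma$ along the comparison geodesic in $\mathcal H$, then exploit that distances in $Z(Y)$ are dominated by the corresponding distances in $\mathcal H$. The paper simplifies the bookkeeping by assuming at the outset that $\gamma$ is parametrized by arclength (so your $\sigma$ becomes the identity), and for Point~(1) it invokes convexity of horoballs in $\mathcal H$ rather than arguing directly about the semicircle; these are cosmetic differences.

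The one genuine variation is in Point~(3). The paper observes that for $\alpha \geq 1$ and $\beta \in [0,1]$ the map $u \mapsto \arccosh(\alpha + \beta u^2/2)$ is $1$-Lipschitz, and applies this with $\alpha = \cosh(r(s)-r(t))$, $\beta = e^{-(r(s)+r(t))}$ to pass from $\dist[Y]{\gamma(s)}{\gamma(t)} + l$ to $\dist[Y]{\gamma(s)}{\gamma(t)}$ at a cost of $l$. Your triangle-inequality argument in $\mathcal H$ is the geometric incarnation of exactly this fact: the map above is $u \mapsto \dist[\mathcal H]{\tilde p}{(u,e^{r(t)})}$, and its $1$-Lipschitz property is precisely the statement that horizontal displacement $\Delta u$ at height $\geq 1$ has hyperbolic length at most $\Delta u$. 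So the two arguments are equivalent; yours is perhaps more transparent, the paper's slightly more compact. Your closing remark about the reparametrization is overcautious: once you have $L(\nu|_{[s,t]}) \leq \dist[Z(Y)]{\nu(s)}{\nu(t)} + l$ for all $s,t$, the $(1,l)$-quasi-geodesic property for the arclength reparametrization is immediate.
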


\begin{proof}
	Without loss of generality we can assume that $\gamma$ is parametrized by arclength.
	We define several comparison points in the upper half plane model of the hyperbolic space (see \autoref{fig: horocone - lifting path}).
	Given $i \in \{1,2\}$ we denote by $\tilde y_i$ and $\tilde x_i$ the points of $\mathcal H$ with respective coordinates $(a_i,1)$ and $(a_i,e^{r_i})$ so that $\dist{\tilde x_i}{\tilde y_i}=r_i$.
	There exists a map $r : \intval {a_1}{a_2} \rightarrow \R_+$ such that the path $\tilde \nu : \intval {a_1}{a_2} \rightarrow \mathcal H$ defined by $\tilde \nu(t) = (t,e^{r(t)})$ is the geodesic between $\tilde x_1$ and $\tilde x_2$.
	Since horoballs in $\mathcal H$ are convex for every $t \in \intval {a_1}{a_2}$, we have $r(t) \geq \min\{r,r'\}$.
	As stated in the lemma, we define that path $\nu$ by $\nu(t) = (\gamma(t),r(t))$.
	It joins $x_1$ and $x_2$.
	
	\begin{figure}[htbp]
	\centering
		\includegraphics{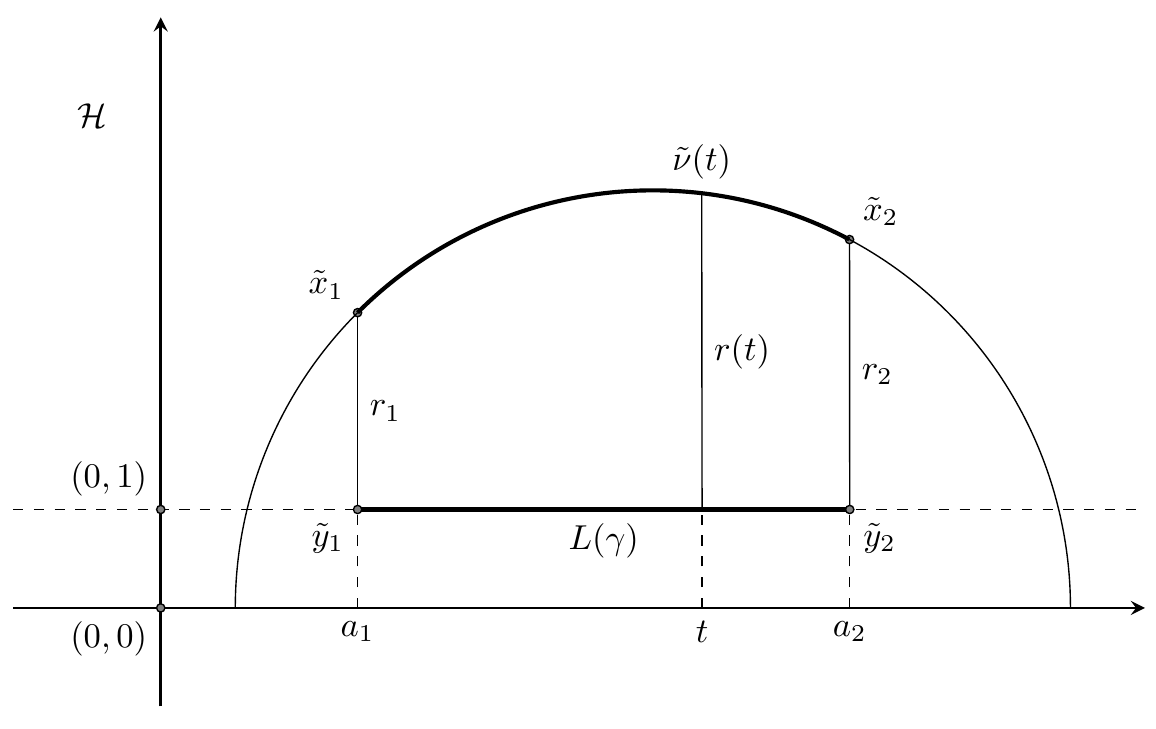}
	\caption{Lifting a path in a horocone.}
	\label{fig: horocone - lifting path}
	\end{figure}
	
	\paragraph{}We now focus on the length of $\nu$.
	Let $s,t \in \intval {a_1}{a_2}$.
	Since $\gamma$ is parametrized by arclength, $\dist{\gamma(s)}{\gamma(t)} \leq \dist st$.
	Using our geometric interpretation of the distance in $Z(Y)$ we get that $\dist{\nu(s)}{\nu(t)} \leq \dist{\tilde \nu(s)}{\tilde \nu(t)}$.
	This holds for every $s,t \in \intval {a_1}{a_2}$.
	Since $\tilde \nu$ is a geodesic of $\mathcal H$ the path $\nu$ is rectifiable.
	Moreover for every $s,t \in \intval {a_1}{a_2}$, the length of $\nu$ restricted to $\intval st$ is at most $\dist{\tilde \nu(s)}{\tilde \nu(t)}$.
	Consequently
	\begin{equation*}
		\cosh\left(L\left(\restriction{\nu}{\intval st}\right) \right)
		\leq \cosh\left(\dist{\tilde \nu(s)}{\tilde \nu(t)}\right)
		=  \cosh(r(s)-r(t)) + \frac 12 e^{-(r(s)+r(t))}L\left(\restriction\gamma{\intval st}\right)^2.
	\end{equation*}
	It only remains to prove Point~\ref{enu: lifting path in cone - quasi-geodesic}.
	Assume that $\gamma$ is a $(1,l)$-quasi-geodesic of $Y$.
	Let $s,t \in \intval {a_1}{a_2}$
	The previous inequality becomes
	\begin{equation*}
		\cosh\left(L\left(\restriction{\nu}{\intval st}\right) \right)
		\leq \cosh(r(s)-r(t)) + \frac 12 e^{-(r(s)+r(t))}\left(\dist{\gamma(s)}{\gamma(t)} + l\right)^2.
	\end{equation*}
	However for every $\alpha \in  [1, + \infty)$, for ever $\beta \in \intval 01$ the function from $\R_+$ to $\R_+$ which sends $u$ to $\arccosh(\alpha + \beta u^2/2)$ is $1$-Lipschitz.
	Consequently the length of $\nu$ restricted to $\intval st$ is at most $\dist{\nu(s)}{\nu(t)} + l$.
\end{proof}

\begin{coro}
	If $Y$ is a length space, so is $Z(Y)$.
\end{coro}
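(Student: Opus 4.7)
The plan is to deduce the corollary directly from \autoref{res:  lifting path in cone}, which already supplies a mechanism for converting rectifiable paths in $Y$ into rectifiable paths in $Z(Y)$ with controlled length. Fix two points $x_1 = (y_1, r_1)$ and $x_2 = (y_2, r_2)$ of $Z(Y)$. Since $Y$ is a length space, for every $\epsilon > 0$ one can choose a rectifiable path $\gamma_\epsilon$ in $Y$ from $y_1$ to $y_2$ with $L(\gamma_\epsilon) \leq \dist{y_2}{y_1} + \epsilon$.

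Applying \autoref{res:  lifting path in cone} to $\gamma_\epsilon$ yields a rectifiable path $\nu_\epsilon$ in $Z(Y)$ from $x_1$ to $x_2$ whose length satisfies
\[
\cosh\bigl(L(\nu_\epsilon)\bigr) \leq \cosh(r_2 - r_1) + \tfrac{1}{2} e^{-(r_1+r_2)} \bigl(\dist{y_2}{y_1} + \epsilon\bigr)^2.
\]
Comparing with the defining identity~\eqref{eqn: def metric horocone} for $\cosh \dist{x_2}{x_1}$ and using that $\cosh$ is continuous and strictly increasing on $\R_+$, one gets $L(\nu_\epsilon) \to \dist{x_2}{x_1}$ as $\epsilon \to 0$.

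On the other hand, in any metric space, the length of a rectifiable path is always bounded below by the distance between its endpoints. Combining the two inequalities shows that $\dist{x_2}{x_1}$ is exactly the infimum of the lengths of rectifiable paths from $x_1$ to $x_2$ in $Z(Y)$. Existence of at least one such path (hence rectifiable path-connectedness) is a by-product of the same construction. Hence $Z(Y)$ is a length space.

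There is no genuine obstacle here: the entire work has already been carried out in \autoref{res:  lifting path in cone}, and the corollary is essentially a repackaging of Point~\ref{enu: lifting path in cone - length} of that proposition combined with the length-space hypothesis on $Y$.
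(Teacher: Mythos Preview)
Your argument is correct and is precisely the intended one: the paper's proof is the single sentence ``It follows directly from \autoref{res:  lifting path in cone},'' and you have simply spelled out the details of that deduction. There is nothing to add.
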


\begin{proof}
	It follows directly from \autoref{res:  lifting path in cone}.
\end{proof}

\begin{lemm}
\label{res: pushing path in a cone}
	Let $\gamma : \intval {a_1}{a_2} \rightarrow Y$ be a continuous path of $Y$ joining two points $y_1$ and $y_2$ and $d$ its diameter.
	Let $r \in \R_+$.
	The path $\nu : \intval {a_1}{a_2} \rightarrow Z(Y)$ defined by $\nu(t) = (\gamma(t),r)$ joins $x_1 = (y_1,r)$ and $x_2 = (y_2,r)$.
	Its diameter is at most $e^{-r}d$.
\end{lemm}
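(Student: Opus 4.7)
The statement is essentially a direct computation from the metric formula~(\ref{eqn: def metric horocone}) combined with the properties of the map $\mu$ established in \autoref{res: function mu}. The plan is as follows.

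First I would fix two parameters $s,t \in \intval{a_1}{a_2}$ and observe that the two points $\nu(s) = (\gamma(s), r)$ and $\nu(t) = (\gamma(t), r)$ share the same radial coordinate $r$. Plugging this into the defining equation~(\ref{eqn: def metric horocone}) of the horocone metric, the term $\cosh(r-r) = 1$, so
\begin{equation*}
  \cosh \dist[Z(Y)]{\nu(s)}{\nu(t)} = 1 + \frac{1}{2} e^{-2r} \dist[Y]{\gamma(s)}{\gamma(t)}^2.
\end{equation*}
Comparing with the functional equation $\cosh(\mu(u)) = 1 + \tfrac{1}{2} u^2$ defining the comparison map $\mu$, this rewrites as
\begin{equation*}
  \dist[Z(Y)]{\nu(s)}{\nu(t)} = \mu\!\left( e^{-r} \dist[Y]{\gamma(s)}{\gamma(t)} \right).
\end{equation*}

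Next I would invoke \autoref{res: function mu}: the function $\mu$ is $1$-Lipschitz and satisfies $\mu(0) = 0$, hence $\mu(u) \leq u$ for all $u \in \R_+$. Applying this to $u = e^{-r}\dist[Y]{\gamma(s)}{\gamma(t)}$ yields
\begin{equation*}
  \dist[Z(Y)]{\nu(s)}{\nu(t)} \leq e^{-r}\dist[Y]{\gamma(s)}{\gamma(t)} \leq e^{-r} d,
\end{equation*}
where the last inequality uses that $d$ is by assumption the diameter of the image of $\gamma$. Taking the supremum over $s,t \in \intval{a_1}{a_2}$ gives the desired bound on the diameter of $\nu$. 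The fact that $\nu$ joins $x_1$ and $x_2$ and is continuous is immediate from the continuity of $\gamma$ and the product structure $Z(Y) = Y \times \R_+$.

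There is no genuine obstacle here; the only subtlety is recognizing the right-hand side of the metric formula as an evaluation of $\mu$, which then allows one to convert the $\cosh$ identity into a linear inequality via the Lipschitz property. Once this identification is made, the exponential decay factor $e^{-r}$ falls out mechanically.
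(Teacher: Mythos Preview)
Your argument is correct and matches the paper's proof essentially line for line: both plug the equal-radius points into the horocone metric formula, recognize the resulting expression as an evaluation of $\mu$, and then use that $\mu$ is $1$-Lipschitz with $\mu(0)=0$ to bound the distance by $e^{-r}d$. The only cosmetic difference is that the paper first bounds $\dist{\gamma(s)}{\gamma(t)}$ by $d$ and then applies $\mu$, whereas you apply $\mu$ first and bound afterward; either order works since $\mu$ is non-decreasing.
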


\begin{proof}
	Let $s,t \in \intval {a_1}{a_2}$.
	By definition of the metric on $Z(Y)$ we have
	\begin{equation*}
		\cosh\dist{\nu(s)}{\nu(t)}  = 1 + \frac 12 e^{-2r}\dist{\gamma(s)}{\gamma(t)}^2 \leq 1 + \frac 12 e^{-2r}d^2.
	\end{equation*}
	It follows that $\dist{\nu(s)}{\nu(t)} \leq \mu(e^{-r}d)$.
	Since $\mu$ is $1$-Lipschitz the diameter of $\nu$ is at most $e^{-r}d$.
\end{proof}

\paragraph{}
We now focus on the curvature of the horocones.
To that end we introduce an auxiliary cone of finite radius.
It corresponds to the cones defined in \cite[Definition 4.1]{Coulon:2013tx} with a reverse parametrization on the factor $\intval 0 \rho$.

\begin{defi}	
\label{def: cone}
	Let $\rho >0$.
	The \emph{cone of radius $\rho$ over $Y$} denoted by $Z_\rho(Y)$ is the quotient of $Y \times \intval 0\rho$ by the equivalence relation which identifies all the points of the form $(y,\rho)$.
	It is endowed with a metric characterized as follows.
	For every $x_1 = (y_1,r_1)$ and $x_2 = (y_2,r_2)$ in $Z_\rho(Y)$,
	\begin{equation}
	\label{eqn: def metric cone}
		\cosh \dist {x_2}{x_1}  = \cosh(r_2 - r_1) + \sinh(\rho-r_1)\sinh(\rho-r_2)\left[ 1 - \cos\left( \min \left\{ \pi, \frac{\dist {y_2}{y_1}}{\pi \sinh \rho}\right\}\right)\right].
	\end{equation}
\end{defi}

\begin{prop}{\rm \cite[Proposition 4.6]{Coulon:2013tx}}\quad
\label{res: cone hyperbolic}
	For every $\rho >0$, the cone of radius $\rho$ over $Y$ is $2 \boldsymbol \delta$-hyperbolic, where $\boldsymbol \delta$ denotes the hyperbolicity constant of the hyperbolic plane $\H_2$.
\end{prop}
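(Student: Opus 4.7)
My plan is to verify the four-point condition (\ref{eqn: four point condition}) for $Z_\rho(Y)$ by comparison with the hyperbolic plane $\H_2$, which is itself $\boldsymbol\delta$-hyperbolic.

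The starting observation is that formula (\ref{eqn: def metric cone}) is precisely the hyperbolic law of cosines. For any two points $x_i = (y_i, r_i)$ and $x_j = (y_j, r_j)$ of $Z_\rho(Y)$, there exist points $\tilde o, \tilde x_i, \tilde x_j \in \H_2$ with $d_{\H_2}(\tilde o, \tilde x_i) = \rho - r_i$, $d_{\H_2}(\tilde o, \tilde x_j) = \rho - r_j$, angle $\theta(y_i, y_j) := \min\{\pi, d_Y(y_i, y_j)/(\pi\sinh\rho)\}$ at $\tilde o$, and $d_{\H_2}(\tilde x_i, \tilde x_j) = d_{Z_\rho(Y)}(x_i, x_j)$. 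Here $\tilde o$ corresponds to the apex $[y, \rho]$ of the cone, which lies at distance $\rho - r$ from every $(y, r)$. Thus each pair of cone points admits an exact hyperbolic realization with a prescribed apex.

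Given four points $x_1, \ldots, x_4 \in Z_\rho(Y)$, I would place four comparison points $\tilde x_i \in \H_2$ around a common apex $\tilde o$ at radii $\rho - r_i$, choosing angular positions so that some pairwise distances are preserved while others are bounded in a controlled direction. The angular function $\theta$ on $Y \times Y$ inherits from the triangle inequality in $Y$ together with the $\pi$-cutoff two useful structural properties: subadditivity $\theta(y_i, y_k) \leq \theta(y_i, y_j) + \theta(y_j, y_k)$, and a quasi-Lipschitz inequality $|\theta(y_i, y_4) - \theta(y_j, y_4)| \leq \theta(y_i, y_j)$. Combined with the monotonicity of the hyperbolic law of cosines in the apex angle (radii fixed), these should allow arranging $\tilde x_1, \ldots, \tilde x_4$ so that $d_{\H_2}(\tilde x_i, \tilde x_4) = d_{Z_\rho(Y)}(x_i, x_4)$ exactly, while the remaining distances are bounded in a way compatible with the Gromov four-point condition. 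Applying the $\boldsymbol\delta$-hyperbolicity of $\H_2$ to the arrangement, together with the one-sided distance distortion, then yields the four-point condition for $Z_\rho(Y)$ with constant $2\boldsymbol\delta$.

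The main obstacle is to produce a single comparison arrangement—or to combine two of them through a bridging estimate—that gives the inequalities in the correct direction for all six pairwise distances simultaneously. An arrangement around $\tilde o$ in the plane has only three angular degrees of freedom, so the six pairwise distances cannot all be matched exactly; some case analysis on the relative sizes of $\theta(y_i, y_4)$ and $\theta(y_i, y_j)$ for $i, j \neq 4$ seems unavoidable. In particular, when $\theta(y_i, y_4) + \theta(y_j, y_4) > \pi$ the effective angle at the apex in $\H_2$ saturates at $\pi$, and a separate treatment is needed in that regime. The factor $2\boldsymbol\delta$ rather than $\boldsymbol\delta$ ultimately reflects this inability to match all distances at once, costing one $\boldsymbol\delta$ at each step where a cone distance is compared to a hyperbolic distance in the chosen planar realization.
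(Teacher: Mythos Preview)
The paper does not prove this proposition; it is quoted from \cite[Proposition 4.6]{Coulon:2013tx} without argument, so there is no proof in the present paper to compare against.

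Your approach---reducing to the $\boldsymbol\delta$-hyperbolicity of $\H_2$ via the law-of-cosines interpretation of (\ref{eqn: def metric cone})---is the natural one and is in the spirit of the cited reference. However, what you have written is a plan rather than a proof: you yourself flag the central difficulty (arranging four comparison points around a common apex $\tilde o$ so that all six pairwise distances are controlled in the right direction) and leave it unresolved. That difficulty is genuine. With only three angular degrees of freedom in the plane you cannot match all six pairwise distances, and subadditivity of $\theta$ alone does not automatically give one-sided inequalities in the direction required by the four-point condition: for instance, when one of the $\theta(y_i,y_j)$ has been truncated to $\pi$ while the others have not, monotonicity of the law of cosines in the apex angle can push a comparison distance the wrong way. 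The phrases ``should allow arranging'' and ``some case analysis \ldots\ seems unavoidable'' are precisely where the content lies. Until the ``main obstacle'' paragraph is replaced by an explicit comparison configuration (or a pair of configurations joined by a concrete bridging inequality), together with a verification that each of the six distances moves in the needed direction, this remains a sketch and not a proof.
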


\begin{coro}
\label{res: horocone hyperbolic}
	The horocone $Z(Y)$ is $2 \boldsymbol \delta$-hyperbolic.
\end{coro}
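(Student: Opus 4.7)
The plan is to realize the horocone $Z(Y)$ as a pointwise limit, at the level of four-tuples of points, of the finite cones $Z_\rho(\pi Y)$ appearing in Proposition \ref{res: cone hyperbolic} (where $\pi Y$ denotes the space $Y$ with its metric scaled by $\pi$) and then invoke stability of the four point inequality under such limits. Since hyperbolicity as stated in Definition \ref{def: hyperbolic space} is an inequality on Gromov products of arbitrary four-tuples, and the Gromov product depends continuously on the pairwise distances, we do not need any global notion of convergence of metric spaces: pointwise convergence on each fixed four-tuple is enough.

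First I would fix four arbitrary points $x_i = (y_i, r_i) \in Z(Y)$ for $i \in \{1,2,3,4\}$ and, for every $\rho > \max_i r_i$, identify them with points $x_i^\rho = (y_i, r_i)$ in $Z_\rho(\pi Y)$. Then I would show that for each pair $i,j$,
\begin{equation*}
   \lim_{\rho \to +\infty} \dist[Z_\rho(\pi Y)]{x_i^\rho}{x_j^\rho} = \dist[Z(Y)]{x_i}{x_j}.
\end{equation*}
Plugging $\pi \cdot \dist[Y]{y_i}{y_j}$ into the defining formula \eqref{eqn: def metric cone} for the finite cone, the $\pi$ in the numerator cancels the $\pi$ in the denominator of $\pi \sinh\rho$, and for $\rho$ large enough the minimum equals $\dist[Y]{y_i}{y_j}/\sinh\rho$, which tends to $0$. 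Using the Taylor expansion $1-\cos\theta = \tfrac{1}{2}\theta^2 + O(\theta^4)$ together with the elementary limit $\sinh(\rho - r)/\sinh(\rho) \to e^{-r}$, the second term of \eqref{eqn: def metric cone} converges to $\tfrac{1}{2}e^{-(r_i+r_j)}\dist[Y]{y_i}{y_j}^2$, which is precisely the second term in the definition \eqref{eqn: def metric horocone} of the horocone metric. The first term $\cosh(r_j-r_i)$ is already independent of $\rho$, so the full expression converges as claimed.

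To conclude, I would apply Proposition \ref{res: cone hyperbolic} to each finite cone $Z_\rho(\pi Y)$: the four points $x_i^\rho$ satisfy the four point inequality \eqref{eqn: four point condition} with constant $2\boldsymbol{\delta}$. Because the Gromov product is a continuous function of the three pairwise distances involved, passing to the limit $\rho \to +\infty$ yields the same inequality for the Gromov products of $x_1, x_2, x_3, x_4$ computed with respect to the horocone metric. As the four points were arbitrary, $Z(Y)$ is $2\boldsymbol{\delta}$-hyperbolic. The only real computation is the asymptotic analysis in the previous paragraph, and no step requires $Y$ to be geodesic, proper, or bounded, so the statement is obtained in its full generality.
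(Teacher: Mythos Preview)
Your proof is correct and follows essentially the same limiting argument as the paper: both identify a point $(y,r)$ of the horocone with the point of the finite cone having the same coordinates, show that the pairwise distances converge as $\rho\to+\infty$, and then pass to the limit in the four point inequality using \autoref{res: cone hyperbolic}. The only difference is cosmetic---you rescale the base to $\pi Y$ in order to cancel the factor of $\pi$ in formula~\eqref{eqn: def metric cone} and make the asymptotic computation transparent, whereas the paper works directly with $Z_\rho(Y)$ and leaves the verification of the limit implicit.
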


\begin{proof}
	Let $x = (y,r)$ and $x'=(y',r')$ be two points of $Z(Y)$.
	If $\rho \geq \max\{r,r'\}$, then one can consider the points $x_\rho$ and $x'_\rho$ of $Z_\rho(Y)$ whose coordinates are respectively $(y,r)$ and $(y',r')$.
	Note that
	\begin{displaymath}
		\lim_{\rho \rightarrow + \infty} \dist{x_\rho}{x'_\rho} = \dist x{x'}.
	\end{displaymath}
	The hyperbolicity is defined by a four point metric inequality.
	For every $\rho>0$, $Z_\rho(Y)$ is $2\boldsymbol\delta$-hyperbolic.
	By taking the limit it follows that so is $Z(Y)$.
\end{proof}

The next proposition explains that $Z(Y)$ is a horoball.
In particular the boundary at infinity of $Z(Y)$ contains exactly one point.

\begin{prop}
\label{res: horocone - horoball}
	Let $x_0 = (y_0, r_0)$ be a point of $Z(Y)$.
	We denote by $\rho : \R_+ \rightarrow Z(Y)$ the geodesic ray starting at $x_0$ defined by $\rho(t) = (y_0, r_0 +t)$.
	For every point $x = (y,r)$ of $Z(Y)$, we have
	\begin{equation*}
		\lim_{t \rightarrow + \infty} \dist{x}{\rho(t)} - t  = r_0 -r.
	\end{equation*}

\end{prop}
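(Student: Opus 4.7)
The plan is to perform a direct asymptotic expansion using the defining formula~(\ref{eqn: def metric horocone}) for the horocone metric. First I would verify that $\rho$ is actually a geodesic ray: applying~(\ref{eqn: def metric horocone}) with $y_1 = y_2 = y_0$ immediately yields $\dist{\rho(s)}{\rho(t)} = |t-s|$, so $\rho$ is an isometric embedding of $\R_+$.

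Fix $x = (y,r)$ and set $D = \dist[Y]{y}{y_0}$. Applying~(\ref{eqn: def metric horocone}) to $x$ and $\rho(t) = (y_0, r_0+t)$ gives
\begin{equation*}
\cosh\dist{x}{\rho(t)} = \cosh(r_0 + t - r) + \frac{1}{2} e^{-(r+r_0+t)} D^2.
\end{equation*}
Writing $\dist{x}{\rho(t)} = (r_0 + t - r) + \epsilon(t)$ reduces the task to showing $\epsilon(t) \to 0$. Using the identity $\cosh a - \cosh b = 2\sinh\frac{a+b}{2}\sinh\frac{a-b}{2}$, I would then rewrite the previous equation as
\begin{equation*}
2 \sinh\left(r_0 + t - r + \frac{\epsilon(t)}{2}\right) \sinh\left(\frac{\epsilon(t)}{2}\right) = \frac{1}{2} e^{-(r+r_0+t)} D^2.
\end{equation*}

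The main step is the asymptotic analysis of this last equation. The right-hand side decays like $e^{-t}$ as $t \to \infty$, while the first factor on the left grows like $\frac{1}{2} e^{t+r_0-r}$ once $\epsilon(t)$ is known to stay bounded. Consequently $\sinh(\epsilon(t)/2)$ must decay like $e^{-2t}$, which forces $\epsilon(t) \to 0$ (in fact exponentially). To justify the a priori boundedness of $\epsilon(t)$ for large $t$, I would combine the triangle inequality $\dist{x}{\rho(t)} \leq \dist{x}{x_0} + t$ (giving an upper bound) with the trivial lower bound $\cosh\dist{x}{\rho(t)} \geq \cosh(r_0+t-r)$ obtained by dropping the non-negative second summand in the metric formula. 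Once $\epsilon(t) \to 0$ is established, the conclusion $\dist{x}{\rho(t)} - t \to r_0 - r$ is immediate.

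I do not expect any substantial obstacle: the entire argument is a one-variable asymptotic analysis of a single closed-form equation, with no genuine geometric input beyond the definition of the horocone metric. The only delicate point is the short boundedness verification for $\epsilon(t)$ before extracting the limit, but even this is fully routine.
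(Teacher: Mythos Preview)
Your proposal is correct and follows essentially the same approach as the paper: both write down the formula $\cosh\dist{x}{\rho(t)} = \cosh(r_0+t-r) + \tfrac12 e^{-(r_0+t+r)}\dist{y}{y_0}^2$ and extract the asymptotic $\dist{x}{\rho(t)} = t + (r_0-r) + o(1)$. The paper's version is simply terser, asserting the $o(1)$ conclusion directly, whereas you spell out the intermediate $\epsilon(t)$ substitution and boundedness check; your extra details are accurate but not strictly necessary at this level.
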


\begin{proof}
	Let $x = (y,r)$ be a  point of $Z(Y)$.
	Let $t \in \R_+$.
	By definition of the metric on $Z(Y)$, we have
	\begin{equation*}
		\cosh  \dist x{\rho(t)}  = \cosh (r_0+t - r) + \frac 12 e^{-(r_0+t+r)}\dist y{y_0}^2.
	\end{equation*}
	It follows that
	\begin{equation*}
		\dist x{\rho(t)}  = t + (r_0 - r) + o(1), \quad t \rightarrow + \infty. \qedhere
	\end{equation*}
\end{proof}


\subsection{Definition of the cone-off and first properties.}
\label{sec: cone-off - definition}

\paragraph{}
In this section $X$ denotes a metric length space.
We consider a family $\mathcal Y$ of rectifiably path connected subspaces of $X$.
For every $Y \in \mathcal Y$ we denote by $\distV[Y]$ the length metric of $Y$ induced by the restriction to $Y$ of the metric $\distV[X]$ of $X$.
Then $Z(Y)$ stands for the horocone over $Y$ endowed with $\distV[Y]$.
It comes with natural maps $\iota : Y \hookrightarrow Z(Y)$ and $p : Z(Y) \rightarrow Y$ (see \autoref{sec: horocone}).

\begin{defi}
\label{def: cone-off}
	The \emph{cone-off over $X$ relative to $\mathcal Y$} is the space obtained by attaching for every $Y \in \mathcal Y$ the horocone $Z(Y)$ on $X$ according to the map $\iota$.
	We denote it by $\dot X(\mathcal Y)$ or simply $\dot X$.
\end{defi}

The cone-off can be seen as the quotient of the disjoint union of $X$ and the horocones $Z(Y)$, $Y \in \mathcal Y$ by the equivalence relation that identify every point $y \in Y$ with its image $\iota(y)$ in $Z(Y)$.
In order to simplify the notation we use the same letter to design a point of this disjoint union and its equivalence class in $\dot X$.
Defined in this way, the cone-off is just a set of points.
We now explain how to endow it with a length structure.

\paragraph{} Note that the cones $Z(Y)$ are not attached isometrically to $X$.
The map $\mu$ defined in \autoref{sec: horocone} control from below the distortion between the metric on the cones and the one on the base space.
Indeed for every $Y \in \mathcal Y$, for ever $y,y' \in Y$,
\begin{equation}
\label{eqn: cone-off distortion from below}
	\mu\left(\dist[X] y{y'}\right) \leq \mu\left( \dist[Y]y{y'}\right) = \dist{\iota(y)}{\iota(y')}.
\end{equation}
Using the vocabulary introduced by the first author in \cite[Section 5]{Coulon:2013tx}, it means that the collection
\begin{displaymath}
	\mathcal Z = \set{(Z(Y), Y, \iota)}{Y \in \mathcal Y}
\end{displaymath}
is a \emph{$(\mu, X)$-family}.

\paragraph{}
We endow first the disjoint union of $X$ and the horocones $Z(Y)$ with the metric induced by their respective distances.
Let $x$ and $x'$ be two points of $\dot X$.
We define $\dist[SC] x{x'}$ to be the infimum over the distances between two points of the previous disjoint union whose classes in $\dot X$ are respectively $x$ and $x'$.
This does not define a metric.
Indeed $\distV[SC]$ does not satisfy the triangle inequality.
Therefore we introduce chains of points.
A \emph{chain} between $x$ and $x'$ is a finite sequence $C = (z_0, \dots, z_m)$ of points of $\dot X$ whose first and last points are respectively $x$ and $x'$.
Its length, denoted by $l(C)$, is
\begin{displaymath}
	l(C) = \sum_{i=0}^{m-1} \dist[SC] {z_{j+1}}{z_j}.
\end{displaymath}

\begin{prop}{\rm \cite[Proposition 5.10]{Coulon:2013tx}}\quad
\label{res: distance on the cone-off}
	For every $x, x' \in \dot X$ we put
	\begin{displaymath}
		 \dist[\dot X]x{x'} =\inf \set{l(C)}{C \text{ chain between } x \text{ and } x'}.
	\end{displaymath}
	It endows $\dot X$ with a length structure.
\end{prop}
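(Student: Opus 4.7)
The plan is to verify that $\distV[\dot X]$ is a metric and then to check that it coincides with the path-length infimum. Nonnegativity and symmetry of $\dist[\dot X]x{x'}$ are immediate: reversing a chain $(z_0, \dots, z_m)$ produces a chain of the same length from $x'$ to $x$. The triangle inequality follows by concatenation: given a chain $C$ from $x$ to $y$ and a chain $C'$ from $y$ to $x'$, the concatenated chain $C * C'$ has length $l(C) + l(C')$, and taking infima gives $\dist[\dot X]x{x'} \leq \dist[\dot X]xy + \dist[\dot X]y{x'}$.

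For the length-space property, given distinct $x, x' \in \dot X$ and $\epsilon > 0$, I would take a chain $(z_0, \dots, z_m)$ with $l(C) \leq \dist[\dot X]x{x'} + \epsilon / 2$. By definition of $\dist[SC]$, for each consecutive pair $z_i, z_{i+1}$ there are preimages in a common component of the disjoint union (either $X$ or a single horocone $Z(Y)$) at distance at most $\dist[SC]{z_i}{z_{i+1}} + \epsilon / (2m)$. Since $X$ is a length space by assumption and each $Z(Y)$ is also a length space (as noted after \autoref{res:  lifting path in cone}), these preimages can be joined by an almost-geodesic inside their common component. Projecting these paths to $\dot X$ and concatenating them yields a continuous path from $x$ to $x'$ of length at most $\dist[\dot X]x{x'} + \epsilon$, which, together with the trivial inequality $\dist[\dot X]x{x'} \leq \inf_\gamma L(\gamma)$, proves the length-space property.

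The delicate point is that $\dist[\dot X]$ separates points. Here the $(\mu, X)$-family structure is essential, entering through the distortion bound~(\ref{eqn: cone-off distortion from below}): $\mu(\dist[X]y{y'}) \leq \dist[Z(Y)]{\iota(y)}{\iota(y')}$ for $y, y' \in Y$. Combined with \autoref{res: function mu}, which states that $\mu$ is concave, $1$-Lipschitz, strictly positive on $(0, \infty)$, and satisfies $\mu(u) \geq u - u^3/12$, this means that each inclusion $\iota : Y \hookrightarrow Z(Y)$ is arbitrarily close to an isometry on small enough scales. The strategy is to prove a local non-collapsing estimate: for every $x \in \dot X$ and every representative $\tilde x$ in the disjoint union, there exist $r_x > 0$ and $C_x \geq 1$ such that any chain from $x$ whose other endpoint projects outside the $r_x$-ball around $\tilde x$ in its intrinsic metric has length at least $r_x / C_x$. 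Positivity of $\dist[\dot X]x{x'}$ for distinct points then follows.

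The main obstacle is controlling chains that alternate many times between $X$ and various horocones, since each transition from $X$ into a cone can contract distances by the map $\mu$. To prevent these contractions from accumulating to zero, one uses the subadditivity of $\mu$ together with the fact that $\mu(u)/u$ tends to $1$ as $u \to 0$, which provides a uniform multiplicative lower bound on distances of size at most some $r_0 > 0$. This local bi-Lipschitz control is encoded in the $(\mu, X)$-family framework of \cite[Section~5]{Coulon:2013tx} and supplies the uniform lower bound required for positivity, completing the verification that $\distV[\dot X]$ is a length metric.
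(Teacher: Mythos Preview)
The paper does not prove this proposition at all: it is quoted verbatim from \cite[Proposition~5.10]{Coulon:2013tx} and no argument is given. So there is no ``paper's own proof'' to compare your sketch against beyond that citation.

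Your outline is sound for the easy parts. Symmetry and the triangle inequality are indeed formal, and your argument for the length-space property is the right one: approximate a chain by paths inside each piece (using that $X$ and every $Z(Y)$ are length spaces) and concatenate. One small comment: your ``trivial inequality'' $\dist[\dot X]x{x'} \leq \inf_\gamma L(\gamma)$ also needs a line of justification, namely that any rectifiable path in $\dot X$ can be subdivided into finitely many subarcs each lying in a single piece $X$ or $Z(Y)$, so that the endpoints of those subarcs form a chain whose $l(\cdot)$ is bounded by $L(\gamma)$.

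For positivity, however, your proposal is more of a plan than a proof. You correctly identify the obstacle (chains oscillating between $X$ and many cones, with $\mu$ contracting at each transition) and the relevant tools (subadditivity of $\mu$ and $\mu(u)/u \to 1$ as $u\to 0$), but you do not actually carry out the estimate; you end by pointing back to the $(\mu,X)$-family framework of \cite[Section~5]{Coulon:2013tx}. That is effectively the same citation the paper makes. If you want a genuinely self-contained argument, the cleanest route is to prove directly the inequality $\mu(\dist[X]x{x'}) \leq \dist[\dot X]x{x'}$ for $x,x'\in X$ (this is \autoref{res: metric cone-off and base}, itself cited from \cite{Coulon:2013tx}): by subadditivity and monotonicity of $\mu$ together with (\ref{eqn: cone-off distortion from below}), every chain $C$ between $x$ and $x'$ satisfies $\mu(\dist[X]x{x'}) \leq l(C)$. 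Positivity for points of $X$ follows since $\mu$ is strictly positive on $(0,\infty)$; for points in some $Z(Y)\setminus Y$ one then uses \autoref{res: metric cone-off and horocone coincide}. This is precisely the content of \cite[Lemmas~5.7--5.8 and Proposition~5.10]{Coulon:2013tx}.
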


The natural embeddings $X \hookrightarrow \dot X$ and $Z(Y) \hookrightarrow \dot X$ are not isometric.
However they are by construction 1-Lipschitz.
We now detail the relation between the metrics on $X$ and $Z(Y)$ and the one on $\dot X$.

\begin{prop}{\rm \cite[Lemma 5.8]{Coulon:2013tx}} \quad
\label{res: metric cone-off and base}
	For every $x$ and $x'$ in $X$ we have
	\begin{equation*}
		\mu\left(\dist[X] x{x'} \right) \leq \dist [\dot X] x{x'} \leq \dist[X] x{x'}.
	\end{equation*}
\end{prop}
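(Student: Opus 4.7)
The plan is to prove the two inequalities separately. The upper bound $\dist[\dot X]{x}{x'} \leq \dist[X]{x}{x'}$ is immediate: since $x$ and $x'$ both lie in $X$, the two-term chain $C = (x,x')$ satisfies $l(C) = \dist[SC]{x}{x'} \leq \dist[X]{x}{x'}$, where the bound is obtained by choosing representatives of $x$ and $x'$ in the $X$-piece of the disjoint union. Passing to the infimum over chains yields the claim.

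For the lower bound $\mu(\dist[X]{x}{x'}) \leq \dist[\dot X]{x}{x'}$, I would begin with a structural observation about the disjoint union $W = X \sqcup \bigsqcup_{Y \in \mathcal Y} Z(Y)$ used to define $\dist[SC]$: points lying in distinct pieces of $W$ are at pseudo-distance $+\infty$, so any chain $C=(z_0,\dots,z_m)$ of finite length must have every consecutive pair $(z_i,z_{i+1})$ admitting representatives in a common piece. A change of piece between $z_i$ and $z_{i+1}$ can therefore occur only at points of $\dot X$ possessing representatives in several pieces of $W$; by the definition of the equivalence relation $y \sim \iota(y)$, these are precisely the elements of $\bigcup_{Y \in \mathcal Y} Y \subseteq X$, each appearing in $Z(Y)$ as a level-zero point $(y,0)$.

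Given any finite chain $C$ from $x$ to $x'$, I would then group consecutive entries lying in a common piece of $W$, producing landmarks $x=a_0,a_1,\dots,a_k=x'$ all belonging to $X$, such that each pair $(a_j,a_{j+1})$ is joined by a sub-chain confined to a single piece. Two cases arise. If the sub-chain lives in $X$, its length is at least $\dist[X]{a_j}{a_{j+1}} \geq \mu(\dist[X]{a_j}{a_{j+1}})$, since $\mu(u) \leq u$ by \autoref{res: function mu}. If the sub-chain lives in some horocone $Z(Y)$, then $a_j$ and $a_{j+1}$ are represented by $(y_j,0)$ and $(y_{j+1},0)$ in $Z(Y)$, and the sub-chain has length at least
\begin{equation*}
\dist[Z(Y)]{(y_j,0)}{(y_{j+1},0)} = \mu\bigl(\dist[Y]{y_j}{y_{j+1}}\bigr) \geq \mu\bigl(\dist[X]{y_j}{y_{j+1}}\bigr),
\end{equation*}
where the equality follows directly from the defining formula (\ref{eqn: def metric horocone}) of the horocone metric evaluated at $r_1 = r_2 = 0$, and the inequality uses that $\dist[Y]$ dominates the restriction of $\dist[X]$ to $Y$ together with the monotonicity of $\mu$.

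Summing the contributions of the sub-chains and invoking the subadditivity and monotonicity of $\mu$ from \autoref{res: function mu}, combined with the triangle inequality in $X$, yields
\begin{equation*}
l(C) \;\geq\; \sum_{j=0}^{k-1}\mu\bigl(\dist[X]{a_j}{a_{j+1}}\bigr) \;\geq\; \mu\!\left(\sum_{j=0}^{k-1}\dist[X]{a_j}{a_{j+1}}\right) \;\geq\; \mu\bigl(\dist[X]{x}{x'}\bigr),
\end{equation*}
and taking the infimum over chains completes the proof. The only delicate step is the decomposition of a finite-length chain into piece-wise sub-chains with all landmarks in $X$; this is forced by the infinite pseudo-distance between distinct pieces of $W$, and apart from this piece of bookkeeping no ingredient beyond the horocone metric formula and \autoref{res: function mu} is required.
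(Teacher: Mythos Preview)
The paper does not supply its own proof of this proposition; it simply cites \cite[Lemma 5.8]{Coulon:2013tx}. Your argument is correct and is exactly the direct chain-based proof one expects: the upper bound is immediate from the embedding $X\hookrightarrow\dot X$, and the lower bound comes from decomposing an arbitrary chain at the points where it switches pieces, bounding each piecewise contribution below by $\mu(\dist[X]{a_j}{a_{j+1}})$ via either the horocone formula (\ref{eqn: def metric horocone}) at level $0$ or the inequality $\mu(u)\leq u$, and then using the subadditivity and monotonicity of $\mu$ from \autoref{res: function mu}. The paper has set up precisely these properties of $\mu$ and the $(\mu,X)$-family framework so that this argument goes through, and (\ref{eqn: cone-off distortion from below}) is already stated in the paper as the pointwise ingredient you use. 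One minor technical point worth making explicit in a final write-up: since $\dist[SC]$ is an infimum over representatives, the choice of piece for each link of the chain may require an $\epsilon$-approximation before grouping into sub-chains; this is routine but should be mentioned.
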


\rem Recall that $\mu$ is a continuous map, therefore the topology on $X$ induces by $\distV[\dot X]$ is the same as the one induced by $\distV[X]$.

\begin{prop}{\rm \cite[Lemma 5.7]{Coulon:2013tx}}\quad
\label{res: metric cone-off and horocone coincide}
	Let $Y\in \mathcal Y$.
	Let $x$ be a point of $Z(Y)$.
	We denote by $d$ the distance between $x$ and $Y$ measured with the distance of $Z(Y)$.
	Let $x'$ be an other point of $\dot X$.
	If $\dist[\dot X] x{x'} < d$ then $x'$ belongs to $Z(Y)$ and $\dist[\dot X] x{x'} = \dist[Z(Y)] x{x'}$.
\end{prop}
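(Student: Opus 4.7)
The plan is to prove the two inequalities $\dist[\dot X]{x}{x'} \leq \dist[Z(Y)]{x}{x'}$ and $\dist[Z(Y)]{x}{x'} \leq \dist[\dot X]{x}{x'}$ separately. The first is immediate from the definition of the cone-off distance via chains: the one-edge chain $(x,x')$ in $Z(Y)$ shows $\dist[SC]{x}{x'} \leq \dist[Z(Y)]{x}{x'}$, once we know $x' \in Z(Y)$. Thus the real content is showing $x' \in Z(Y)$ together with the reverse inequality, and both will fall out of a single induction.

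The geometric intuition is that $x$ lies strictly inside the horocone and therefore admits a unique representative in the disjoint union defining $\dot X$---namely its copy in $Z(Y)$. No chain emanating from $x$ can leave $Z(Y)$ without first reaching the boundary $Y$, but reaching $Y$ from $x$ inside $Z(Y)$ costs at least $d$. We may discard the degenerate case $x \in Y$, for which $d=0$ makes the hypothesis $\dist[\dot X]{x}{x'} < d$ vacuous.

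Concretely, I would prove by induction on $m$ the following strengthening: for every chain $C = (z_0, \dots, z_m)$ with $z_0 = x$ and $l(C) < d$, the endpoint $z_m$ lies in $Z(Y) \setminus Y$ and satisfies $\dist[Z(Y)]{x}{z_m} \leq l(C)$. The base case $m=0$ is trivial. For the inductive step, the subchain $(z_0, \dots, z_m)$ has length at most $l(C) < d$, so by hypothesis $z_m \in Z(Y) \setminus Y$, and in particular it has no representative in $X$ or in any other horocone. Finiteness of $\dist[SC]{z_m}{z_{m+1}}$ then forces $z_{m+1}$ to share the component $Z(Y)$ with $z_m$, whence $z_{m+1} \in Z(Y)$ and $\dist[SC]{z_m}{z_{m+1}} = \dist[Z(Y)]{z_m}{z_{m+1}}$. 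The triangle inequality in $Z(Y)$ yields $\dist[Z(Y)]{x}{z_{m+1}} \leq l(C) < d$, which in addition forces $z_{m+1} \notin Y$ since $\dist[Z(Y)]{x}{Y} = d$.

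Applying this to a sequence of chains whose length approaches $\dist[\dot X]{x}{x'} < d$ gives $x' = z_m \in Z(Y)$ and, after passing to the infimum, $\dist[Z(Y)]{x}{x'} \leq \dist[\dot X]{x}{x'}$. The main subtlety---that a chain could \emph{a priori} shortcut through the ambient space $X$ by routing through boundary points of horocones---is cleanly ruled out by the induction, because as long as the chain stays strictly inside $Z(Y)$ no representative in $X$ is available, so no such shortcut is accessible from within the open $d$-ball around $x$ in $\dot X$.
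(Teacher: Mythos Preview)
The paper does not prove this proposition itself; it is quoted verbatim from \cite[Lemma~5.7]{Coulon:2013tx} with no argument supplied, so there is no in-paper proof to compare against. Your argument is correct and is essentially the standard one: the chain definition of $\distV[\dot X]$ means that any point of $Z(Y)\setminus Y$ has a \emph{unique} preimage in the disjoint union, so a chain cannot escape $Z(Y)$ without accumulating at least $d$ in length, and the induction on the number of links makes this precise.

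One small remark on presentation: in the inductive step you assert that finiteness of $\dist[SC]{z_m}{z_{m+1}}$ forces $z_{m+1}\in Z(Y)$. This is right, but it is worth saying explicitly why: the quantity $\dist[SC]{z_m}{z_{m+1}}$ is by definition an infimum over pairs of preimages lying in the \emph{same} component of the disjoint union, and since $z_m\in Z(Y)\setminus Y$ has its only preimage in the $Z(Y)$ component, the infimum is $+\infty$ unless $z_{m+1}$ also has a preimage there. You clearly have this in mind, but a reader might otherwise worry that $\dist[SC]$ is always finite.
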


\rem It follows from the lemma that for every rectifiable path $\gamma : \intval ab \rightarrow \dot X$, if $\gamma$ is entirely contained $Z(Y)\setminus Y$ then the lengths of $\gamma$ as a path of  $Z(Y)$ or a path of $\dot X$ are the same.

\paragraph{}
We denote by $p: \dot X \rightarrow X$ the map whose restriction to $X$ is the identity and the one to $Z(Y)$ is the radial projection onto $Y$ defined in \autoref{sec: horocone}.

\begin{lemm}
\label{res: cone-off - bounded projection}
	If $S$ is a bounded subset of $\dot X$ then $p(S)$ is a bounded subset of $X$.
\end{lemm}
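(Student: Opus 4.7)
The plan is to fix a basepoint $x_0 \in S$ and bound $\dist[X]{p(x_0)}{p(x)}$ uniformly for $x \in S$, in terms of $D$, the diameter of $S$ in $\dot X$. By \autoref{res: distance on the cone-off}, for each $x \in S$ I can choose a chain $(z_0 = x_0, z_1, \ldots, z_m = x)$ of total length at most $D + 1$, in which every consecutive pair $z_j, z_{j+1}$ admits representatives in a common component --- either both in $X$, or both in a single horocone $Z(Y_j)$ (possibly via the boundary identification $Y \ni y = (y, 0) \in Z(Y)$). Write $d_j = \dist[SC]{z_j}{z_{j+1}}$, so $\sum_j d_j \leq D + 1$.

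The first step is to control the radial coordinate along the chain. I would define $\tau : \dot X \rightarrow \R_+$ by $\tau(z) = 0$ for $z \in X$ and $\tau((y, r)) = r$ for a horocone point; the two definitions agree on the boundary identification. Formula (\ref{eqn: def metric horocone}) immediately yields $\cosh d_j \geq \cosh(\tau(z_{j+1}) - \tau(z_j))$ on each horocone edge, hence $|\tau(z_{j+1}) - \tau(z_j)| \leq d_j$; on $X$-edges $\tau$ does not change. Telescoping gives the uniform bound $\tau(z_j) \leq \tau(x_0) + D + 1 =: R$.

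The second step is an edge-by-edge projection estimate. On each horocone edge, rearranging (\ref{eqn: def metric horocone}) together with the radial bound gives
\[
\dist[X]{p(z_j)}{p(z_{j+1})} \leq \dist[Y_j]{p(z_j)}{p(z_{j+1})} \leq 2 e^{(\tau(z_j) + \tau(z_{j+1}))/2} \sinh(d_j/2) \leq 2 e^R \sinh(d_j/2),
\]
which is essentially the calculation underlying \autoref{res: horocone - projection}. For an $X$-edge, $p$ is the identity, so $\dist[X]{p(z_j)}{p(z_{j+1})} = d_j \leq 2 e^R \sinh(d_j/2)$. Since $u \mapsto \sinh(u/2)$ is convex with $\sinh(0) = 0$, hence superadditive on $\R_+$, the triangle inequality and summation over the chain yield
\[
\dist[X]{p(x_0)}{p(x)} \leq 2 e^R \sum_j \sinh(d_j/2) \leq 2 e^R \sinh((D+1)/2),
\]
a bound depending only on $D$ and $\tau(x_0)$, so $p(S)$ is bounded in $X$.

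The step I expect to require the most care is the radial bookkeeping of the first estimate. The subtlety is that $\dist[SC]$ is an infimum over choices of representation, so when $z_j$ and $z_{j+1}$ live in several common components simultaneously (for instance both in some $Y$, viewed either in $X$ or in $Z(Y)$), one must verify $|\tau(z_{j+1}) - \tau(z_j)| \leq d_j$ for the minimizing representation; since $\tau$ vanishes on $Y$ under every representation, this is preserved. One must also confirm that a chain edge incident to a point strictly inside $Z(Y) \setminus Y$ is necessarily a horocone edge, because interior horocone points admit no representative in $X$, so the dichotomy used in step two really does exhaust all cases.
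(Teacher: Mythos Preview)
Your argument is correct, but it takes a noticeably different route from the paper's. The paper picks a basepoint $x\in X$ (not in $S$) and a radius $d$ with $S\subset B(x,d)$, and then handles each $x'\in S$ by a simple two-case split. If $x'\in X$, \autoref{res: metric cone-off and base} directly gives $\mu(\dist[X]{x}{x'})\leq d$, hence $\dist[X]{x}{x'}\leq 2\sinh(d/2)$. If $x'\in Z(Y)\setminus Y$, the chain definition of $\distV[\dot X]$ yields an intermediate point $z\in Y$ with $\dist[\dot X]{x}{z}+\dist[Z(Y)]{z}{x'}<d$; one then applies \autoref{res: metric cone-off and base} to the first leg and \autoref{res: horocone - projection} to the second. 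No edge-by-edge chain analysis, no radial bookkeeping, no superadditivity of $\sinh$.

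Your approach is more ``first principles'': you unpack the chain definition completely, track the height $\tau$ along the chain, and control each edge separately. This avoids invoking \autoref{res: metric cone-off and base} as a black box (you essentially reprove a version of it inside the argument), at the cost of the extra convexity/superadditivity step and the somewhat larger bound $2e^{R}\sinh((D+1)/2)$ with $R=\tau(x_0)+D+1$. One small point worth tightening: when you write ``$\dist[X]{p(z_j)}{p(z_{j+1})}=d_j$'' for an $X$-edge, this presumes the infimum defining $\dist[SC]$ is realized by the $X$-representation; strictly you only know $\dist[X]{z_j}{z_{j+1}}\geq d_j$, but since in any case $\dist[X]{z_j}{z_{j+1}}\leq 2\sinh(d_j/2)$ (via whichever representation nearly realizes $d_j$), the desired edge bound still holds.
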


\begin{proof}
	Let $S$ be a bounded subset of $\dot X$.
	There exist $x \in X$ and $d \in \R_+$ such that $S$ is contained in the ball $B(x,d)$ of $\dot X$.
	Let $x'$ be a point of $S$.
	We distinguish two cases.
	Assume first that $x'$ belongs to $X$.
	Then by \autoref{res: metric cone-off and base}, $\mu(\dist[X] x{x'}) \leq \dist[\dot X]x{x'}$.
	It follows that $\dist[X] x{p(x')} < 2\sinh (d/2)$.
	Assume now that there exists $Y \in \mathcal Y$ such that $x'$ belongs to $Z(Y)\setminus Y$.
	By definition of the metric on $\dot X$, there exists a point $z \in Y$ such that $\dist[\dot X] xz + \dist[Z(Y)] z{x'} < d$.
	In particular $\dist[\dot X] xz < d$ and $\dist[Z(Y)] z{x'} < d$.
	It follows as previously that $\dist[X] xz < 2 \sinh(d/2)$.
	Moreover by \autoref{res: horocone - projection},
	\begin{equation*}
		\dist[X]{p(z)}{p(x')} \leq \dist[Y]{p(z)}{p(x')} < 2e^{d/2}\sinh\left(d/2\right).
	\end{equation*}
	By the triangle inequality,
	\begin{equation*}
		\dist[X]x{p(x')} \leq \dist[X] xz + \dist[X]{p(z)}{p(x')} < 2(e^{d/2}+1) \sinh(d/2).
	\end{equation*}
	Consequently, $p(S)$ is contained in the ball of $X$ of center $x$ and radius $2(e^{d/2}+1) \sinh(d/2)$.
\end{proof}

We conclude this section by some topological properties of the cone-off.

\begin{lemm}
\label{res: deformation retract}
	The base space $X$ is a deformation retract of $\dot X$.
\end{lemm}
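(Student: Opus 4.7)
The plan is to exhibit an explicit strong deformation retraction $H \colon \dot X \times \intval 01 \to \dot X$ that radially collapses each horocone onto its base. First I would set $H(x,t)=x$ for every $x \in X$ and every $t$, and $H((y,r),t)=(y,(1-t)r)$ for every $(y,r) \in Z(Y)$ with $Y \in \mathcal Y$. The two formulas agree on the identified set $\iota(Y) = X \cap Z(Y)$, since the point $(y,0)$ is fixed by both, so $H$ descends to a well-defined map on $\dot X \times \intval 01$. By construction $H(\cdot,0) = \mathrm{Id}_{\dot X}$, $H(\cdot,1)$ takes values in $X$, and $H(x,t)=x$ for all $x \in X$ and all $t$, which is exactly what is required of a strong deformation retraction once $H$ is shown to be continuous.

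Continuity away from the seam will be routine. On $X \times \intval 01$ the map is constant in $t$ and the identity in the first variable. On each open piece $(Z(Y) \setminus \iota(Y)) \times \intval 01$, the formula $(y,r,t) \mapsto (y,(1-t)r)$ is continuous for the metric of $Z(Y)$ by direct inspection of (\ref{eqn: def metric horocone}), and then continuous into $\dot X$ because the inclusion $Z(Y) \hookrightarrow \dot X$ is $1$-Lipschitz.

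The real obstacle will be continuity at a pair $(x_\infty,t_\infty)$ with $x_\infty \in X$ approached by a sequence $(x_n,t_n)$ with $x_n \in Z(Y_n) \setminus \iota(Y_n)$, say $x_n = (y_n,r_n)$. The key claim is that $r_n \to 0$. Suppose not; after extraction, $r_n \geq r_0 > 0$ while $\dist[\dot X]{x_n}{x_\infty} \to 0$, so eventually $\dist[\dot X]{x_n}{x_\infty} < r_0 \leq r_n = \dist[Z(Y_n)]{x_n}{\iota(Y_n)}$. By \autoref{res: metric cone-off and horocone coincide}, $x_\infty$ then lies in $Z(Y_n)$ and $\dist[\dot X]{x_n}{x_\infty} = \dist[Z(Y_n)]{x_n}{x_\infty}$. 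Since $x_\infty \in X$, it must actually belong to $\iota(Y_n)$, and then the horocone formula (\ref{eqn: def metric horocone}) forces $\dist[Z(Y_n)]{x_n}{x_\infty} \geq r_n \geq r_0$, contradicting $x_n \to x_\infty$. Once $r_n \to 0$ is established, the vertical segment in $Z(Y_n)$ gives
$$\dist[\dot X]{H(x_n,t_n)}{x_n} \leq \dist[Z(Y_n)]{(y_n,(1-t_n)r_n)}{(y_n,r_n)} = t_n r_n \longrightarrow 0,$$
so $H(x_n,t_n) \to x_\infty = H(x_\infty,t_\infty)$, completing continuity and hence the proof.
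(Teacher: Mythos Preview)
Your proof is correct and uses exactly the same radial retraction $H$ as the paper (the paper calls it $F$); in fact, the paper's proof simply writes down this map and asserts it is a deformation retraction without further comment. Your additional verification of continuity at the seam, via \autoref{res: metric cone-off and horocone coincide} to force $r_n \to 0$, is a welcome elaboration of a point the paper leaves implicit.
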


\begin{proof}
	We consider the map $F : \dot X \times \intval 01 \rightarrow X$ defined as follows.
	\begin{itemize}
		\item for every $x$ in $X$, for every $t \in \intval 01$, $F(x,t) = x$,
		\item for every $Y \in \mathcal Y$, for every $x = (y,r)$ in $Z(Y)$, for every $t \in \intval 01$, $F(x,t)$ is the point of $Z(Y)$ defined by $F(x,t) = (y,(1-t)r)$.
	\end{itemize}
	The map $F$ is a deformation retraction of $\dot X$ onto $X$.
\end{proof}

\begin{prop}
\label{res: cone-off simply-connected}
	Let $\epsilon >0$
	Assume that $X$ is $\epsilon$-simply-connected relative to $\mathcal Y$.
	Then the cone-off $\dot X$ is $\epsilon$-simply-connected.
\end{prop}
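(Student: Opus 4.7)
The plan is to reduce an arbitrary based loop in $\dot X$ to the form required by \autoref{def: epsilon simply-connected} by combining three ingredients that are already at hand: the deformation retraction of $\dot X$ onto $X$ (\autoref{res: deformation retract}), the relative $\epsilon$-simple connectivity of $X$, and the ability to shrink loops inside a horocone by pushing them up, as quantified in \autoref{res: pushing path in a cone}.

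First I would fix a basepoint $x_0 \in X \subset \dot X$; by path-connectedness, this is no loss for the property in question. Given a based loop $\gamma$ at $x_0$ in $\dot X$, the deformation retraction $F : \dot X \times [0,1] \to X$ fixes every point of $X$, hence fixes $x_0$, so it gives a based homotopy from $\gamma$ to a loop $\gamma'$ entirely contained in $X$. By the hypothesis that $X$ is $\epsilon$-simply connected relative to $\mathcal Y$, the loop $\gamma'$ is homotopic in $X$ (and therefore in $\dot X$) to a concatenation $\gamma_1 \cdot \gamma_2 \cdots \gamma_m$, in which each $\gamma_i$ is freely homotopic to a loop $\eta_i$ that either has diameter at most $\epsilon$ or is contained in some $Y_i \in \mathcal Y$. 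Loops of the first type already satisfy the conclusion, so the real work is in the second case.

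For a factor $\gamma_i$ freely homotopic (in $X$) to $\eta_i \subset Y_i \in \mathcal Y$, I would push $\eta_i$ up inside the horocone $Z(Y_i) \subset \dot X$ via the straight-line homotopy $H(t,s) = (\eta_i(t), sr)$, which is a free homotopy in $Z(Y_i)$ between $\eta_i$ at $s=0$ and the level-$r$ loop $\eta_{i,r}(t) = (\eta_i(t), r)$. Applying \autoref{res: pushing path in a cone} to $\eta_{i,r}$ bounds its $Z(Y_i)$-diameter by $e^{-r} d_i$, where $d_i$ denotes the (finite) diameter of the continuous loop $\eta_i$. Because the natural embedding $Z(Y_i) \hookrightarrow \dot X$ is $1$-Lipschitz (as recorded after \autoref{res: distance on the cone-off}), the $\dot X$-diameter of $\eta_{i,r}$ is bounded by the same quantity. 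Choosing $r$ large enough that $e^{-r} d_i \leq \epsilon$ and composing the two free homotopies exhibits $\gamma_i$ as freely homotopic in $\dot X$ to the loop $\eta_{i,r}$ of diameter at most $\epsilon$.

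I do not anticipate a serious obstacle: the three inputs — the retraction, the relative simple connectivity of $X$, and the pushing lemma — slot together cleanly. The only point that warrants care is ensuring that the diameter estimate produced by \autoref{res: pushing path in a cone} really controls the $\dot X$-diameter of the pushed loop, but this follows at once from the $1$-Lipschitz inclusion of $Z(Y_i)$ into $\dot X$, so no appeal to the sharper \autoref{res: metric cone-off and horocone coincide} (which would also work, provided $r$ exceeds the pushed diameter) is actually needed.
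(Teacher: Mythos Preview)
Your proposal is correct and follows essentially the same route as the paper's proof: retract to $X$, invoke the relative $\epsilon$-simple connectivity to factor the loop, and push the pieces lying in some $Y$ up into the horocone $Z(Y)$ via \autoref{res: pushing path in a cone}. The only cosmetic difference is that the paper cites \autoref{res: metric cone-off and horocone coincide} to transfer the diameter bound from $Z(Y)$ to $\dot X$, whereas you use the $1$-Lipschitz inclusion $Z(Y)\hookrightarrow \dot X$; as you yourself note, either works.
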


\begin{proof}
	Let $x_0$ be a base point of $X$.
	Let $\gamma$ be a loop of $\dot X$ based at $x_0$.
	According to \autoref{res: deformation retract} $\gamma$ is homotopic relative to $x_0$ to a loop $\gamma'$ contained in $X$.
	By assumption $\gamma'$ is homotopic to a product of loops $\gamma_1 \cdot \gamma_2 \cdots \gamma_m$ such that for every $m \in \intvald 1m$, $\gamma_i$ is freely homotopic to a loop which has either diameter bounded above by $\epsilon$ or is contained in some $Y \in \mathcal Y$.
	By Lemmas~\ref{res: pushing path in a cone} and \ref{res: metric cone-off and horocone coincide} a loop contained in some $Y \in \mathcal Y$ can be pushed up in the horocone $Z(Y)$ so that its diameter is at most $\epsilon$.
\end{proof}


\subsection{Cone-off and hyperbolicity}
\label{sec: cone-off - ultra-limit}

\paragraph{}
In this section we try to understand the curvature of the cone-off over a metric space.
To that end we use a limit argument.
The proposition below corresponds to the limit case.

\begin{prop}{\rm \cite[Prop. 5.26]{Coulon:2013tx}} \quad
\label{res: cone-off curvature tree graded case}
	Let $X$ be a metric space and $\mathcal Y$ a collection of subspaces of $X$.
	If $X$ is tree-graded with respect to $\mathcal Y$ then $\dot X(\mathcal Y)$ is $2 \boldsymbol \delta$-hyperbolic.
\end{prop}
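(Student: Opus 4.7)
The plan is to verify the four-point hyperbolicity condition~(\ref{eqn: four point condition}) on $\dot X$ with constant $2\boldsymbol\delta$, combining the hyperbolicity of each horocone (\autoref{res: horocone hyperbolic}) with the tree-graded assembly of the base. The underlying principle is that a tree-like union of $2\boldsymbol\delta$-hyperbolic spaces, glued along single points, inherits the same hyperbolicity constant; properties (T1) and (T2) ensure that $\dot X$ exhibits precisely such a structure, with the horocones playing the role of ``fat vertices'' and the cut points of the tree-graded decomposition of $X$ playing the role of edges.

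First I would establish a normal form for length-minimizing chains in $\dot X$. Combining \autoref{res: distance on the cone-off} with \autoref{res: metric cone-off and horocone coincide}, any near-optimal chain between two points $x, x' \in \dot X$ can be refined to alternate between excursions into horocones $Z(Y) \setminus Y$, each of which is an honest geodesic of the ambient $Z(Y)$, and segments lying in the base $X$. The tree-graded hypothesis then forces these intermediate segments to pass through the uniquely determined sequence of cut points separating consecutive pieces: any purported alternative route would close up a simple loop meeting two distinct pieces, contradicting the combination of (T1) and (T2). Hence $\dist[\dot X]{x}{x'}$ decomposes canonically as a sum of internal horocone distances plus a ``tree distance'' traversing cut points in $X$.

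Given four points $x_1, x_2, x_3, x_4 \in \dot X$, I would then compute each Gromov product $\gro{x_i}{x_j}{x_k}$ according to this decomposition. The cut-point structure contributes a $0$-hyperbolic (tree) term, while each individual horocone contributes a defect of at most $2\boldsymbol\delta$ by \autoref{res: horocone hyperbolic}. Since two distinct horocones share at most a single point of their pieces by (T1), the defects never stack: at most one horocone is ``active'' in a given four-point configuration, and the total error in the four-point inequality is bounded by $2\boldsymbol\delta$.

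The main obstacle will be rigorously justifying this additivity and ruling out hidden shortcuts in $\dot X$. Because distances inside horocones are compressed logarithmically by the subadditive map $\mu$, one must carefully check that two excursions into distinct horocones cannot interact except through the unique base cut point shared by their pieces, and that the cut-point decomposition is stable under taking infima over chains. Once this tree-like combination principle is secured, the four-point inequality with constant $2\boldsymbol\delta$ follows by a direct case analysis on the positions of the four points relative to the tree of pieces, the worst case being all four points inside a single horocone, where \autoref{res: horocone hyperbolic} provides exactly the required bound.
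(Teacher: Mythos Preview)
The paper does not supply a proof of this proposition: it is quoted verbatim from \cite[Prop.~5.26]{Coulon:2013tx} and used as a black box in the limit argument that follows. So there is no in-paper argument to compare against; any assessment has to be against the general shape of the cited result.

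Your overall strategy---view $\dot X$ as a tree of $2\boldsymbol\delta$-hyperbolic pieces (the horocones $Z(Y)$, together with the $0$-hyperbolic residual tree of $X$), glued along the cut-point structure furnished by (T1)--(T2), and then invoke the principle that a tree of $\delta$-hyperbolic spaces glued along points is again $\delta$-hyperbolic---is the right one, and it is essentially how the cited proposition is established. The normal-form step you describe (reducing a near-optimal chain to an alternation of horocone excursions and cut-point transitions in $X$) is exactly what is needed to make the additivity of distances precise.

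The one place where your sketch is genuinely imprecise is the sentence ``at most one horocone is `active' in a given four-point configuration, and the total error \dots\ is bounded by $2\boldsymbol\delta$.'' As stated this is false: four points can sit in four distinct horocones. What makes the argument work is not that only one horocone appears, but that the tree structure singles out a \emph{median} piece, and after replacing each $x_i$ by its gate $y_i$ into that piece (the unique cut point through which all geodesics from $x_i$ into the piece pass), the distances satisfy $\dist{x_i}{x_j}=\dist{x_i}{y_i}+\dist{y_i}{y_j}+\dist{y_j}{x_j}$. The four-point inequality for the $x_i$ then reduces, after cancellation of the additive tails $\dist{x_i}{y_i}$, to the four-point inequality for the $y_i$ inside a single $2\boldsymbol\delta$-hyperbolic horocone (or in the residual $\R$-tree, where it holds with constant $0$). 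You should replace the ``one active horocone'' heuristic with this gate/median reduction; once that is done your outline becomes a correct proof.
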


Assume now that $(X_n)$ is a sequence of metric spaces whose ultra-limit $X$ is tree-graded.
We would like to capture information about the cone-off over $X_n$ from the cone-off over $X$.
This requires to understand the behavior of the cone-off construction with respect to ultra-limit of metric spaces.
It was done by the first author for the case of cones with finite radius in \cite[Section 3.5]{Coulon:il} and generalized for arbitrary cones in \cite[Section 5.3]{Coulon:2013tx}.
We first recall the relation between cone-off and ultra-limit (see \autoref{res: cone-off vs ultra-limit}).
Then we use this property to investigate the hyperbolicity of the cone-off (see \autoref{res: cone-off curvature general case}).

\paragraph{}
Let $(X_n,e_n)$ be a sequence of pointed geodesic metric spaces.
For every $n \in \N$, we choose a family $\mathcal Y_n$ of rectifiably connected subsets of $X_n$.
We write $\dot X_n$ for the cone-off over $X_n$ relative to $\mathcal Y_n$.
Let $\omega$ a non-principal ultra-filter.
We denote by $X = \limo X_n$ the ultra-limit of $(X_n, e_n)$.
Given a sequence $(Y_n) \in \Pi_{n \in \N} \mathcal Y_n$ we define a (possibly empty) subset of $X$
\begin{displaymath}
	\limo Y_n = \set{\limo y_n}{y_n \in Y_n \text{ $\omega$-as.}}.
\end{displaymath}
We write $\Pi_\omega \mathcal Y_n$ for the set of sequences $(Y_n)$ such that $\limo Y_n$ is not empty.
We endow this set with an equivalence relation.
Given two sequences $(Y_n)$ and $(Y'_n)$ of $\Pi_\omega \mathcal Y_n$,  $(Y_n) \sim (Y'_n)$ if $Y_n = Y'_n$ \oas.
In particular they define the same limit set $\limo Y_n = \limo Y'_n$.
We denote by $\mathcal Y$ the collection of limit sets $Y = \limo Y_n$ where $(Y_n) \in \Pi_\omega \mathcal Y_n/\sim$.
Finally $\dot X$ stands for the cone-off over $X$ relative to $\mathcal Y$.
Our goal is to compare $\limo (\dot X_n, e_n)$ and $\dot X$.

\paragraph{}
As we explained in \autoref{sec: cone-off - definition} the map $\mu$ provides a way to control from below the distortion between the cones attached in the cone-off and the base space.
For our purpose we need a more accurate control of this distortion.
Recall that the distortion of the sequence $(\mathcal Y_n)$ is \emph{$\omega$-controlled} if for every sequence $(Y_n) \in \Pi_{n \in \N} \mathcal Y_n$, for every $(y_n), (y'_n) \in \Pi_{n \in \N} Y_n$, we have
\begin{displaymath}
	\limo \dist[X_n]{y_n}{y'_n} = \limo \dist[Y_n]{y_n}{y'_n}.
\end{displaymath}
This definition is slightly different from the one given by the first author in \cite{Coulon:2013tx}.
However if the distortion of $(\mathcal Y_n)$ is $\omega$-controlled (in our sense) then for every $(Y_n) \in \Pi_{n \in \N} \mathcal Y_n$, for every $(y_n), (y'_n) \in \Pi_{n \in \N} Y_n$, we have
\begin{equation*}
	\limo \mu\left(\dist[X_n]{y_n}{y'_n}\right)
	= \limo \mu\left( \dist[Y_n]{y_n}{y'_n}\right)
	= \limo \dist[Z(Y_n)]{\iota_n(y_n)}{\iota_n(y'_n)},
\end{equation*}
which corresponds exactly to Definition~5.15 of \cite{Coulon:2013tx}.

\begin{theo}
\label{res: cone-off vs ultra-limit}
	Assume that the distortion of the sequence $(\mathcal Y_n)$ is $\omega$-controlled, then the spaces $\limo (\dot X_n, e_n)$ and $\dot X$ are isometric.
\end{theo}

\begin{proof}
	To compare $\limo (\dot X_n, e_n)$ and $\dot X$ we introduce two kind of maps.
	\begin{displaymath}
		\begin{array}{lcccclccc}
			\psi:	& X			& \rightarrow	& \limo \dot X_n 	& \quad	& \psi_Y:	& Z(Y)		& \rightarrow	& \limo \dot X_n \\
					& \limo x_n	& \rightarrow	& \limo x_n		&		& 		& (\limo y_n,r)	& \rightarrow	& \limo (y_n,r)
		\end{array}
	\end{displaymath}
	The second kind of map is defined for every $Y \in \mathcal Y$.
	Note that for every $Y \in \mathcal Y$, for every $y \in Y$, $\psi_Y\circ \iota (y) = \psi(y)$.
	Therefore there exists a map $\dot \psi$ from $\dot X$ to $\limo \dot X_n$ whose restriction to $X$ (\resp $Z(Y)$) is $\psi$ (\resp $\psi_Y$).
	According to \cite[Proposition 5.16]{Coulon:2013tx} for every $t \geq 0$, the map $\dot \psi$ induces an isometry from $B(e_n, \mu(t)/2)$ onto $B(\dot\psi(e_n), \mu(t)/2)$.
	However the function $\mu$ is not bounded.
	Hence $\dot \psi$ is an isometry.
\end{proof}

\begin{theo}
\label{res: cone-off curvature general case}
	Let $\epsilon >0$.
	Assume that for every sequence of base points $(e_n) \in \Pi_{n \in \N} X_n$,  $(X_n,e_n)$ is sparsely tree-graded with respect to $(\mathcal Y_n)$.
	In addition suppose that
	\begin{enumerate}
		\item \label{enu: cone-off curvature general case - simply-connected assumption}
		for every $n \in \N$, $X_n$ is $\epsilon$-simply-connected relative to $\mathcal Y_n$.
		\item \label{enu: cone-off curvature general case - distortion assumption}
		the distortion of the sequence $(\mathcal Y_n)$ is $\omega$-controlled.
	\end{enumerate}
	Then the cone-off $\dot X_n(\mathcal Y_n)$ over $X_n$ relative to $\mathcal Y_n$ is $900\boldsymbol \delta$-hyperbolic \oas.
\end{theo}

\begin{proof}
The proof of the theorem proceeds in two steps.
First we use \autoref{res: cone-off vs ultra-limit} to compare the $\omega$-limit of $\dot X_n$ and the cone-off over $\limo X_n$.
However $\limo X_n$ is assumed to be tree-graded.
It follows that $\dot X_n$ is locally hyperbolic.
The second step consists in going from local hyperbolicity to global hyperbolicity using the Cartan-Hadamard Theorem (see \autoref{res: cartan hadamard}).
More precisely it works as follows.

\paragraph{}
	Recall that $\boldsymbol \delta$ stands for the hyperbolicity constant of the hyperbolic place $\H_2$.
	Let us fix $\rho > \max\{10^{10} \boldsymbol\delta, 10^6\epsilon\}$.
	We denote by $A$ the set of integers $n \in \N$ such that for every $x \in \dot X_n$ the ball of center $x$ and radius $\rho$ in $\dot X_n$ is $3\boldsymbol\delta$ hyperbolic.
	We claim that $\omega(A) = 1$.
	Assume on the contrary that our assertion is false.
	Then for every $n \in \N$ there exists a point $x_n$ in $\dot X_n$ such that \oas the ball of center $x_n$ and radius $\rho$ is \emph{not} $3\boldsymbol\delta$ hyperbolic.
	Note that we can choose $x_n$ in the $3\rho$-neighborhood of $X_n$.
	Indeed if this is not the case then $B(x_n, \rho)$ is entirely contained in a horocone $Z(Y_n)$ and the metrics of $\dot X_n$ and $Z(Y_n)$ coincide on this ball (see \autoref{res: metric cone-off and horocone coincide}), thus by \autoref{res: horocone hyperbolic}, $B(x_n,\rho)$ is $2\boldsymbol\delta$-hyperbolic.
	We now denote by $e_n$ a projection of $x_n$ onto $X_n$.
	
	\paragraph{}Since the ball $B(x_n,\rho)$ is not $3\boldsymbol \delta$-hyperbolic, it contains four points $u_n$, $v_n$, $w_n$ and $t_n$ such that
	\begin{displaymath}
		\gro {u_n}{w_n}{t_n} < \min \left\{ \gro {u_n}{v_n}{t_n}, \gro {v_n}{w_n}{t_n}\right\} - 3 \boldsymbol \delta.
	\end{displaymath}
	However these points remain at a distance uniformly bounded of $e_n$.
	Consequently they define four points $u$, $v$, $w$ and $t$ of $\limo (\dot X_n, e_n)$ such that
	\begin{equation}
	\label{eqn: cone-off curvature general case - gromov product limit}
		\gro uwt \leq \min \left\{ \gro uvt, \gro vwt\right\} - 3 \boldsymbol \delta.
	\end{equation}
	By assumption $X = \limo (X,e_n)$ is  tree-graded with respect to $\mathcal Y$.
	It follows from \autoref{res: cone-off curvature tree graded case} that $\dot X$ is $2\boldsymbol \delta$-hyperbolic.
	On the other hand, according to \autoref{res: cone-off vs ultra-limit} the space $\limo (\dot X_n, e_n)$ and $\dot X(\mathcal Y)$ are isometric.
	This contradicts (\ref{eqn: cone-off curvature general case - gromov product limit}) and ends the proof of our claim.
	
	\paragraph{}
	We assumed that every $X_n$ is $\epsilon$-simply-connected relative to $\mathcal Y$.
	Thus by \autoref{res: cone-off simply-connected}, $\dot X_n$ is $\epsilon$-simply-connected.
	Applying the Cartan-Hadamard theorem, for every $n \in A$, the space $\dot X_n$ is (globally) $900\boldsymbol\delta$-hyperbolic.
\end{proof}


\subsection{Group action on a hyperbolic cone-off space.}
\label{sec: group action on a cone-off space}

\paragraph{}
In this section $X$ is a proper geodesic metric space and $G$ a group acting properly co-compactly by isometries on it.
We denote by $\mathcal Y$ a $G$-invariant collection of closed rectifiably connected subsets of $X$.
The goal of this section is to study the action of $G$ on the cone-off $\dot X(\mathcal Y)$ when the later is hyperbolic.
In particular we prove that the horocones that we attached are horoballs.
Moreover the group $G$ is relatively hyperbolic and the maximal parabolic subgroups of $G$ are exactly the stabilizers of the horocones.

\paragraph{}From now on we assume that  $\mathcal Y / G$ is finite and $\dot X(\mathcal Y)$ is $\delta$-hyperbolic.
Without loss of generality we can assume that $\delta >0$.
According to the stability of quasi-geodesics (see \autoref{res: stability quasi-geodesics}) there exists $L > 100 \delta$ with the following property.
The Hausdorff distance between two $L$-local $(1, \delta)$-quasi-geodesics of $\dot X$ joining the same extremities is at most $7\delta$.
Our first proposition generalizes \autoref{res: metric cone-off and horocone coincide}.

\begin{prop}
\label{res: metric cone-off and horocone globally coincide}
	Let $Y \in \mathcal Y$.
	Let $x=(y,r)$ and $x'=(y',r')$ be two points of $Z(Y)$ with $\min\{ r,r'\} > L$.
	If $p$ is a point of $\dot X$ with $\gro x{x'}p < L - 11\delta$ then $p$ belongs to $Z(Y) \setminus Y$.
	Moreover $\dist[\dot X] x{x'} = \dist[Z(Y)]x{x'}$.
\end{prop}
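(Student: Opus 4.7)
My plan is to show that a suitable quasi-geodesic from $x$ to $x'$ in $\dot X$ stays deep inside the horocone $Z(Y)$; then any point $p$ with small Gromov product $\gro{x}{x'}{p}$ must lie close enough to this quasi-geodesic to be trapped inside $Z(Y) \setminus Y$, and the two ambient distances between $x$ and $x'$ are forced to agree. The depth hypothesis $\min\{r,r'\} > L$ is precisely what allows the switch between $\distV[\dot X]$ and $\distV[Z(Y)]$ via \autoref{res: metric cone-off and horocone coincide}.

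First I would build a quasi-geodesic $\gamma$ in $Z(Y)$ joining $x$ and $x'$ by picking a $(1,\delta)$-quasi-geodesic of $Y$ from $y$ to $y'$ (possible since $Y$ is rectifiably path connected) and lifting it through \autoref{res:  lifting path in cone}. Parts~\ref{enu: lifting path in cone - radius} and \ref{enu: lifting path in cone - quasi-geodesic} of that proposition produce a $(1,\delta)$-quasi-geodesic of $Z(Y)$ whose radial coordinate remains at least $\min\{r,r'\}>L$ throughout. Each point of $\gamma$ thus sits at $Z(Y)$-distance greater than $L$ from $Y$, so \autoref{res: metric cone-off and horocone coincide} guarantees that $\distV[\dot X]$ and $\distV[Z(Y)]$ agree on the $\dot X$-ball of radius $L$ around any point of $\gamma$; consequently $\gamma$ is itself an $L$-local $(1,\delta)$-quasi-geodesic of $\dot X$. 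Because $\dot X$ is a length space, I can also pick a genuine $(1,\delta)$-quasi-geodesic $\alpha$ of $\dot X$ between $x$ and $x'$, and the stability statement \autoref{res: stability quasi-geodesics} bounds the Hausdorff distance between $\alpha$ and $\gamma$ in $\dot X$ by $2\delta + 5\delta = 7\delta$.

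To locate $p$, I would apply \autoref{res: quasi-geodesic - quasi-convex} to $\alpha$ to get $d_{\dot X}(p,\alpha) \leq \gro{x}{x'}{p} + 4\delta$, then add the Hausdorff bound to obtain $d_{\dot X}(p,\gamma) \leq \gro{x}{x'}{p} + 11\delta < L$. A point $q \in \gamma$ realising this infimum satisfies $d_{\dot X}(p,q) < L$ while the radial coordinate of $q$ exceeds $L$, so \autoref{res: metric cone-off and horocone coincide} places $p$ in $Z(Y)$ with $d_{\dot X}(p,q) = d_{Z(Y)}(p,q)$. Since the radial coordinate is 1-Lipschitz for $\distV[Z(Y)]$, the radial coordinate of $p$ is at least that of $q$ minus $d_{Z(Y)}(p,q)$, which is strictly positive because the first term exceeds $L$ and the second is less than $L$; hence $p \in Z(Y) \setminus Y$.

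For the distance equality, the same trapping argument applied at every point of $\alpha$ shows that $\alpha$ itself lies in $Z(Y)\setminus Y$, with radial coordinate at least $L - 7\delta > 0$ throughout. The remark following \autoref{res: metric cone-off and horocone coincide} then gives $L_{\dot X}(\alpha) = L_{Z(Y)}(\alpha) \geq d_{Z(Y)}(x,x')$. Replacing $\alpha$ by a $(1,\epsilon)$-quasi-geodesic (still $L$-local $(1,\delta)$ and hence still $7\delta$-close to $\gamma$) and letting $\epsilon \to 0$ yields $d_{Z(Y)}(x,x') \leq d_{\dot X}(x,x')$; the reverse inequality is the 1-Lipschitz embedding $Z(Y) \hookrightarrow \dot X$ noted just before \autoref{res: metric cone-off and base}. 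The main hurdle throughout is the promotion of $\gamma$ from a $Z(Y)$-quasi-geodesic to an $L$-local $\dot X$-quasi-geodesic, which is precisely where the assumption $\min\{r,r'\} > L$ is essential: without it the local identification of the two metrics via \autoref{res: metric cone-off and horocone coincide} would fail and the stability machinery would have nothing to compare with.
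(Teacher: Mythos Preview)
Your proposal is correct and follows essentially the same route as the paper: lift an almost-geodesic of $Y$ to a path $\gamma$ in $Z(Y)$ staying at radius $>L$, use \autoref{res: metric cone-off and horocone coincide} to promote it to an $L$-local $(1,\delta)$-quasi-geodesic of $\dot X$, compare with a $\dot X$-quasi-geodesic via stability, and read off both conclusions. The only cosmetic difference is that the paper works with a single small parameter $\eta<\delta$ throughout and applies \autoref{res: quasi-geodesic - quasi-convex} directly to the $\dot X$-quasi-geodesic $\nu'$ (already shown to sit at depth $>L-7\delta$), whereas you route through $\gamma$ via the Hausdorff bound and introduce the auxiliary $\epsilon\to 0$ only at the end; the constants and logic match.
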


\begin{proof}
	Recall that the embedding $Z(Y) \hookrightarrow \dot X$ is $1$-Lipschitz.
	Hence it is sufficient to prove that $\dist[Z(Y)] x{x'} \leq \dist[\dot X]x{x'}$.
	Let $\eta \in (0, \delta)$.
	Since $Y$ endowed with the induced metric is a length space, there exists a path $\gamma : \intval ab \rightarrow Y$ such that $L(\gamma) \leq \dist[Y]y{y'} + \eta$.
	In particular $\gamma$ is a $(1,\eta)$-quasi-geodesic of $Y$.
	By \autoref{res:  lifting path in cone} there exists a function $r : \intval ab \rightarrow \R_+$ with the following properties.
	\begin{enumerate}
		\item For every $t \in \intval ab$, $r(t) > L$.
		\item Let $\nu : \intval ab \rightarrow Z(Y)$ be the path of $Z(Y)$ defined by $\nu(t) = (\gamma(t), r(t))$.
		The length of $\nu$ is at most $\dist[Z(Y)]x{x'} + \eta$.
	\end{enumerate}
	By construction $\nu$ is entirely contained in $Z(Y)\setminus Y$.
	Thus its length as a path of $\dot X$ or $Z(Y)$ is the same.
	Without loss of generality we can assume that $\nu$ is parametrized by arclength.
	Thus $\nu$ is a $(1, \eta)$-quasi-geodesic of $Z(Y)$.
	We claim that $\nu$ is a $L$-local $(1, \eta)$-quasi-geodesic of $\dot X$.
	Let $s,t \in \intval ab$ such $\dist st \leq L$.
	We have,
	\begin{equation*}
		\dist[\dot X]{\nu(s)}{\nu(t)} \leq \dist[Z(Y)]{\nu(s)}{\nu(t)} \leq \dist st \leq L.
	\end{equation*}
	Since $r(s) > L$ the distance from $\nu(s)$ to $Y$ is larger than $L$.
	According \autoref{res: metric cone-off and horocone coincide} we get
	\begin{equation*}
		\dist st \leq \dist[Z(Y)]{\nu(s)}{\nu(t)} + \eta = \dist[\dot X]{\nu(s)}{\nu(t)} + \eta,
	\end{equation*}
	which completes the proof of our claim.
	Let us now consider a path $\nu' : \intval cd \rightarrow \dot X$ joining $x$ to $x'$ such that $L(\nu') \leq \dist[\dot X] x{x'} + \eta$.
	In particular $\nu'$ is also a $L$-local $(1,\eta)$-quasi-geodesic.
	By choice of $L$, the Hausdorff distance between $\nu$ and $\nu'$ is at most $7 \delta$.
	Thus $\nu'$ is also contained in $Z(Y) \setminus Y$.
	More precisely, the distance between any point of $\nu'$ and $Y$ is at least $L - 7\delta$.
	Let $p$ be a point of $\dot X$ such that $\gro x{x'}p < L - 11\delta$ (the Gromov product being measured with the metric of $\dot X$).
	According to \autoref{res: quasi-geodesic - quasi-convex}, $d(p,\nu') < L -7\delta$.
	Thus is belongs to $Z(Y)\setminus Y$.
	
	\paragraph{}
	Let us now prove the last assertion,
	As a path of $Z(Y)$ the length of  $\nu'$ is the same in $\dot X$ or in $Z(Y)$.
	Consequently
	\begin{equation*}
		\dist[Z(Y)] x{x'} \leq L(\nu') \leq \dist[\dot X]x{x'} + \eta.
	\end{equation*}
	This last inequality holds for every sufficiently small $\eta >0$, hence $\dist[Z(Y)] x{x'} \leq \dist[\dot X]x{x'}$.
\end{proof}

\begin{prop}
\label{res: finitely many coset in a ball}
	Let $K$ be a compact subset of $X$.
	There are only finitely many $Y \in \mathcal Y$ such that $Y \cap K \neq \emptyset$.
\end{prop}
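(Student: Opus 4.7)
The plan is to argue by contradiction. Assume that infinitely many distinct $Y_i\in\mathcal Y$ satisfy $Y_i\cap K\neq\emptyset$. Since $\mathcal Y/G$ is finite, pigeon-hole produces a subsequence with $Y_i=g_iY_0$ for a fixed $Y_0\in\mathcal Y$ and pairwise distinct cosets $g_i\stab(Y_0)$. For each $i$, pick $y_i\in Y_i\cap K$ and lift it to the point $q_i=(y_i,L)\in Z(Y_i)$ at radial height $L$, where $L$ is the local-to-global stability constant fixed after \autoref{res: stability quasi-geodesics}.

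First I would establish two key properties of $\{q_i\}$ in the hyperbolic cone-off $\dot X$. The family is bounded: since $K$ is compact in $X$ and the inclusion $X\hookrightarrow\dot X$ is $1$-Lipschitz, we have $K\subset\overline B_{\dot X}(x_0,R)$ for some $R$, and $d_{\dot X}(y_i,q_i)\le d_{Z(Y_i)}(y_i,q_i)=L$, so $q_i\in\overline B_{\dot X}(x_0,R+L)$. The family is pairwise $L$-separated: if $d_{\dot X}(q_i,q_j)<L$ for some $i\neq j$, then since the $Z(Y_i)$-distance from $q_i$ to $Y_i$ equals $L$, \autoref{res: metric cone-off and horocone coincide} would force $q_j\in Z(Y_i)$; but $q_j\in Z(Y_j)$ is at strictly positive height, so $q_j\notin Y_j\subset X$, whereas $Z(Y_i)\cap Z(Y_j)\subset Y_i\cap Y_j\subset X$, a contradiction.

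The final step is to turn an infinite, bounded, $L$-separated family in $\dot X$ into a contradiction with the proper action of $G$ on $X$. Projecting via $p\colon\dot X\to X$, the points $p(q_i)=y_i$ all lie in the compact set $K$, so up to extracting a further subsequence $y_i\to y^*\in K$. Translating by $g_i^{-1}$, the point $\tilde q_i:=g_i^{-1}q_i=(z_i,L)\in Z(Y_0)$, with $z_i=g_i^{-1}y_i\in Y_0$, lies within $\dot X$-distance $R+L$ of $g_i^{-1}x_0\in X$. Applying \autoref{res: cone-off - bounded projection} to the ball $B_{\dot X}(\tilde q_i,R+L)$ (which contains $g_i^{-1}x_0$) gives $d_X(g_i^{-1}x_0,z_i)\le M$ for a constant $M$ depending only on $R+L$, so $g_i^{-1}x_0$ lies in the tubular neighborhood $N_X(Y_0,M)$. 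Invoking the cocompactness of $G$ on $X$, write $g_i^{-1}=h_if_i$ where $f_i$ lies in a fixed finite set (coming from proper action on a fundamental compact) and $h_i\in\stab(Y_0)$ brings $z_i$ into a compact fundamental region of $\stab(Y_0)$ on $Y_0$; proper action of $G$ on $X$ then forces the cosets $g_i\stab(Y_0)$ to lie in a finite set, contradicting their pairwise distinctness.

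The main obstacle is the last maneuver, namely the cocompactness of $\stab(Y_0)$ on $Y_0$ required to absorb the unbounded tubular neighborhood $N_X(Y_0,M)$ into a bounded region modulo $\stab(Y_0)$. This goes strictly beyond the elementary properties of the cone-off recorded so far, and is where the full strength of the hyperbolicity of $\dot X$ has to be used: intuitively, each horoball $Z(Y_0)$ has a unique endpoint at infinity $\xi_{Y_0}\in\partial\dot X$ fixed by $\stab(Y_0)$, and the combination of the horoball structure (\autoref{res: horocone - horoball}), quasi-convexity of $Y_0\subset\dot X$, and the proper cocompact action of $G$ on $X$ should confine $z_i$ to a compact subset of $Y_0$ modulo $\stab(Y_0)$. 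Closing this loop is the delicate step; everything else is geometric bookkeeping with the cone-off distance estimates.
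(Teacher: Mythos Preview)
Your proposal has a genuine gap, and you have correctly put your finger on it: the final paragraph requires that $\stab(Y_0)$ act cocompactly on $Y_0$ (or on a tubular neighborhood of it), but in the paper this is \autoref{res: infinite stabilizer}, which is deduced \emph{from} the present proposition. So the argument as written is circular. The earlier observation that the lifted points $q_i=(y_i,L)$ form a bounded, $L$-separated family in $\dot X$ is correct but does not help: $\dot X$ is not yet known to be proper (that is \autoref{res: cone-off proper}, also downstream of this proposition), and after projecting back to $X$ the separation is lost since all the $y_i$ lie in the compact $K$.

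The paper's proof avoids this circularity by using hyperbolicity of $\dot X$ in a more direct and quantitative way, and the key device is to pick \emph{two} points in each translate rather than one. Since $Y_0$ is unbounded (this is established first, by an easy properness argument), one can choose $y'_n\in g_nY$ with $\dist[X]{y_n}{y'_n}=d$ for a large fixed $d$. By compactness of the $d$-neighborhood of $K$ in $X$, pass to $n\neq m$ with $y_n$ close to $y_m$ and $y'_n$ close to $y'_m$. Lift all four points to height $L+\delta$ in their respective horocones. Now \autoref{res: four point property} forces the ``midpoint'' $p$ of the pair $(x_n,x'_n)$ to also have small Gromov product with respect to $(x_m,x'_m)$, and \autoref{res: metric cone-off and horocone globally coincide} then places $p$ simultaneously in $Z(g_nY)\setminus g_nY$ and $Z(g_mY)\setminus g_mY$, which is impossible. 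The two-point trick is what lets the argument exploit hyperbolicity globally without any appeal to cocompactness of the stabilizer; your single-point lift $q_i$ cannot produce such a fellow-traveling contradiction.
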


\begin{proof}	
	The main idea of the proof is the following.
	Assume that the statement is false.
	Using the properness of $X$ one finds two subset $Y,Y' \in \mathcal Y$ that fellow-travel for a long distance.
	Thus we can find two ``geodesics'' respectively lying in $Z(Y)$ and $Z(Y')$ which stay far apart from each other, contradicting the hyperbolicity of $\dot X$.
	More precisely it works as follows.

	\paragraph{}
	Assume that the proposition is false.
	The first part of the proof takes place in $X$.
	In particular all the distances are measured with $\distV[X]$.
	Recall that $\mathcal Y$ contains finitely many $G$-orbits of subsets.
	Therefore there exists $Y \in \mathcal Y$ such that $K$ intersects infinitely many distinct translates of $Y$.
	In other words, for every $n \in \N$, there exist $g_n \in G$ and $y_n \in K$ such that $g_n^{-1}y_n$ belongs to $Y$ and for every $n \neq m$, $g_nY \neq g_mY$.
	By taking if necessary a subsequence, we can assume that $y_n$ converges to a point $y \in X$.
	
	\paragraph{}
	We first claim that $Y$ is unbounded.
	Suppose on the contrary that $Y$ is bounded.
	Since it is closed, $Y$ is compact.
	Without loss of generality we can assume that $g_n^{-1}y_n$ converges to a point $z$ of $Y$.
	By triangle inequlity
	\begin{equation*}
		\dist{g_nz}{g_mz}
		\leq \dist{g_nz}{y_n} + \dist {y_n}{y_m} + \dist {y_m}{g_mz}
		\leq \dist z{g_n^{-1}y_n} + \dist {y_n}{y_m} + \dist {g_m^{-1}y_m}z.
	\end{equation*}
	Thus for every $\eta >0$, if $n$ and $m$ are sufficiently large $\dist{g_nz}{g_mz} \leq \eta$.
	However the group $G$ acts properly on $X$.
	Consequently $(g_n)$ only takes finitely many values, which contradicts the fact that the elements of $(g_nY)$ are pairwise distinct.
	This completes the proof of our claim.
	
	\paragraph{}
	Let us now fix $d$ such that $\mu(e^{-L-\delta}d) > 4L + 12\delta$.
	Since $Y$ is rectifiably connected, for every $n \in \N$, there exists a point $y'_n$ in $g_nY$ such that $\dist {y_n}{y'_n} = d$.
	In particular $y'_n$ belongs to the $d$-neighborhood of $K$.
	Since $X$ is proper, this neighborhood is compact.
	Without loss of generality we can assume that $(y'_n)$ converges to a point $y'$.
	There exist two distinct integers $n,m$ such that $\dist {y_n}{y_m} \leq \delta$ and $\dist{y'_n}{y'_m} \leq \delta$.
	Moreover by construction $\dist{y_n}{y'_n} = \dist{y_m}{y'_m} = d$.
	From now on, the proof takes place in the cone-off $\dot X$.
	Let $x_n$ and $x'_n$ be the points of $Z(g_nY)$ defined by $x_n = (y_n,L+ \delta)$ and $x'_n=(y'_n,L + \delta)$.
	Similarly we defined two points $x_m$ and $x'_m$ in $Z(g_mY)$.
	Thus $\dist{x_n}{x_m} \leq 2L + 3\delta$ and  $\dist{x'_n}{x'_m} \leq 2L + 3\delta$.
	On the other hand
	\begin{equation*}
		\dist{x_n}{x'_n} \geq \mu\left(e^{-L-\delta}\dist{y_n}{y'_n} \right) \geq \mu\left(e^{-L-\delta}d\right) > 4L + 12\delta.
	\end{equation*}
	Thus there exists a point $p \in \dot X$ such that $\gro {x_n}{x'_n}p \leq \delta$ and $\min\{\dist {x_n}p, \dist{x'_n}p \}> 2L + 6\delta$
	According to \autoref{res: four point property} we get $\gro {x_m}{x'_m}p \leq 3\delta$.
	It follows then from \autoref{res: metric cone-off and horocone globally coincide} that $p$ should belong to $Z(g_nY)\setminus g_nY$ and $Z(g_mY)\setminus g_mY$.
	Thus $g_nY = g_mY$.
	Contradiction.
\end{proof}

\begin{coro}
\label{res: infinite stabilizer}
	Let $Y \in \mathcal Y$.
	The stabilizer $\stab Y$ acts co-compactly on $Y$.
	In particular, if $Y$ is unbounded then $\stab Y$ is infinite.
\end{coro}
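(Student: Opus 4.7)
The plan is to combine the cocompactness of the $G$-action on $X$ with \autoref{res: finitely many coset in a ball}. I would first choose a compact set $K \subset X$ with $GK = X$. Then for any $y \in Y$ there exists $g \in G$ with $g^{-1}y \in K$, which means precisely that the translate $g^{-1}Y$ of $Y$ intersects $K$.

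Next I would invoke the standard bijection between translates of $Y$ and right cosets of $\stab Y$: indeed $hY = h'Y$ if and only if $h^{-1}h' \in \stab Y$. Since \autoref{res: finitely many coset in a ball} guarantees that only finitely many elements of $\mathcal Y$ meet $K$, there are only finitely many distinct translates of $Y$ meeting $K$, so the set $T = \{h \in G : hY \cap K \neq \emptyset\}$ decomposes as a finite union of cosets $\bigsqcup_{i=1}^{k} h_i \stab Y$. Writing $g^{-1} = h_i s$ for some index $i$ and some $s \in \stab Y$ gives $sy = h_i^{-1} g^{-1} y \in h_i^{-1}K \cap Y$.

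Setting $K' = \bigcup_{i=1}^k \bigl( h_i^{-1}K \cap Y \bigr)$, each summand is the intersection of a compact subset of $X$ with the closed set $Y$, hence compact in $X$; so $K'$ is itself compact in $X$ and the argument above yields $\stab Y \cdot K' = Y$, which is the required cocompactness. For the ``in particular'' statement I would argue by contraposition: a finite $\stab Y$ would force $\stab Y \cdot K'$ to sit inside a ball of radius $\mathrm{diam}(K') + \max_{s \in \stab Y} \dist{sy_0}{y_0}$ around any chosen base point $y_0 \in K'$, which would force $Y$ to be bounded.

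I do not foresee a substantial difficulty here: the essential geometric input, namely that only finitely many pieces of $\mathcal Y$ meet any compact set, has just been established as \autoref{res: finitely many coset in a ball}, and the remainder is the standard orbit/coset dictionary together with a straightforward boundedness observation.
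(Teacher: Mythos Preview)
Your argument is correct and follows essentially the same route as the paper: pick a compact fundamental domain $K$, use \autoref{res: finitely many coset in a ball} to see that the elements sending $Y$ into $K$ fall into finitely many $\stab Y$-cosets, and conclude that $Y$ is covered by the $\stab Y$-translates of a finite union of compacts. The only slip is terminological---$hY=h'Y \iff h^{-1}h'\in\stab Y$ says that translates biject with \emph{left} cosets $h\stab Y$, not right cosets---but your formulas use left cosets throughout, so the mathematics is unaffected.
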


\begin{proof}
	Since $G$ acts co-compactly on $X$ there exists a compact subset $K$ of $X$ such that $G\cdot K$ covers $X$.
	By \autoref{res: finitely many coset in a ball} only finitely many translates of $Y$ can intersect $K$.
	Thus there exists a finite subset $P$ of $G$ with the following property.
	For every $g \in G$ if $Y \cap gK \neq \emptyset$ then $g$ belongs to $\stab Y \cdot P$.
	Let us denote by $L$ the subset
	\begin{equation*}
		L = \bigcup_{h \in P} hK.
	\end{equation*}
	Since $P$ is finite, $L$ is a compact subset of $X$.
	We claim that $Y$ is contained in $\stab Y \cdot L$.
	Let $y$ be a point of $Y$.
	Since $K$ is a fundamental domain for the action of $G$ on $X$, there exist a point $x \in K$ and an element $g \in G$ such that $y = gx$.
	In particular, $Y \cap gK \neq \emptyset$.
	Thus $g$ can be written $g = uh$ with $u \in \stab Y$ and $h \in P$.
	Consequently, $y = u(hx)$ belongs to $\stab Y \cdot L$, which proves the claim.
	Recall that $Y$ is closed, thus $Y/\stab Y$ is compact.
	Hence $\stab Y$ acts co-compactly on $Y$.
\end{proof}

\begin{coro}
\label{res: cone-off proper}
	The cone-off $\dot X$ is proper and geodesic.
\end{coro}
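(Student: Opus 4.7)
The plan is to invoke the Hopf-Rinow-Cohn-Vossen theorem, which asserts that a complete, locally compact length space is proper and geodesic. Since $\dot X$ is already a length space by \autoref{res: distance on the cone-off}, the corollary reduces to establishing local compactness and completeness.

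For local compactness at a point $x_0 \in \dot X$, I would split according to whether $x_0$ lies in $X$ or in $Z(Y) \setminus Y$ for some $Y \in \mathcal Y$. In the latter case, at height $r_0 > 0$, \autoref{res: metric cone-off and horocone coincide} identifies the closed ball $\bar B_{\dot X}(x_0, \eta)$ with $\bar B_{Z(Y)}(x_0, \eta)$ for $\eta < r_0$, so local compactness at $x_0$ reduces to local compactness of the horocone $Z(Y)$. If instead $x_0 \in X$, I would choose a small radius $\eta$, apply \autoref{res: cone-off - bounded projection} to place the image $p(\bar B_{\dot X}(x_0, \eta))$ inside a $\distV[X]$-compact subset $K \subset X$, and then use \autoref{res: finitely many coset in a ball} to produce a finite list $Y_1, \dots, Y_N \in \mathcal Y$ accounting for all horocone contributions. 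The ball then decomposes as $(\bar B_{\dot X}(x_0, \eta) \cap X) \cup \bigcup_i (\bar B_{\dot X}(x_0, \eta) \cap Z(Y_i))$; the first piece is compact by combining properness of $X$ with \autoref{res: metric cone-off and base}, while each horocone piece reduces once more to local compactness of the relevant $Z(Y_i)$.

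The crux is therefore showing that each horocone $Z(Y)$ is locally compact, which comes down to showing the length space $(Y, \distV[Y])$ is proper. Here I would appeal to \autoref{res: infinite stabilizer}: $\stab Y$ acts cocompactly on $Y$, and since $G$ acts properly on $X$ and $\distV[Y] \geq \distV[X]$, this restricted action remains proper with respect to $\distV[Y]$. A Hopf-Rinow-type argument for a length space carrying a proper cocompact isometric action then yields that $(Y, \distV[Y])$ is proper. Once local compactness of $\dot X$ is in hand, completeness follows in the usual way: a Cauchy sequence lies in a closed ball, local compactness produces a convergent subsequence, and the Cauchy property forces the whole sequence to converge. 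The Hopf-Rinow-Cohn-Vossen theorem then delivers both properness and geodesicity simultaneously.

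The step I anticipate as the main obstacle is the properness of $(Y, \distV[Y])$. The induced length metric on a merely rectifiably connected subset of $X$ can differ substantially from the restriction of $\distV[X]$, so compactness of ambient balls in $Y$ does not immediately translate into compactness of $\distV[Y]$-balls; the cocompact action furnished by \autoref{res: infinite stabilizer} must be leveraged carefully against the length structure to upgrade ambient properness to properness in the intrinsic metric.
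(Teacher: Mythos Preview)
Your overall architecture matches the paper's: both arguments use \autoref{res: cone-off - bounded projection} and \autoref{res: finitely many coset in a ball} to reduce to the two cases ``sequence in $X$'' versus ``sequence in a single horocone $Z(Y)$'', dispose of the first via \autoref{res: metric cone-off and base}, and finish with Hopf--Rinow. The paper works directly with bounded sequences rather than with local compactness plus completeness, but that is a cosmetic difference.

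The substantive divergence is in the horocone case, and this is precisely where you have a gap. You try to reach properness of $(Y,\distV[Y])$ via the cocompact $\stab Y$-action from \autoref{res: infinite stabilizer}, but that corollary only gives cocompactness for the \emph{ambient} topology on $Y$; transferring this to the $\distV[Y]$-topology is exactly the obstacle you flag and do not resolve. Your ``Hopf--Rinow-type argument for a length space carrying a proper cocompact isometric action'' is also not a clean off-the-shelf statement: without already knowing a compact fundamental domain in the $\distV[Y]$-metric, the argument stalls.

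The paper sidesteps all of this with a single observation you have overlooked: the standing hypotheses of \autoref{sec: group action on a cone-off space} include that every $Y\in\mathcal Y$ is \emph{locally undistorted} (\autoref{def: rectifiable path connected}). Given a sequence $x_n=(y_n,r_n)\in Z(Y)$ bounded in $\dot X$, one extracts (by properness of $X$ and closedness of $Y$) subsequential limits $y_n\to y\in Y$ in $\distV[X]$ and $r_n\to r$. Local undistortion then immediately upgrades $\distV[X]$-convergence of $(y_n)$ to $\distV[Y]$-convergence, after which the explicit formula for the horocone metric gives $x_n\to(y,r)$ in $Z(Y)$ and hence in $\dot X$. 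No properness of $(Y,\distV[Y])$, no analysis of $\stab Y$, is needed. Your route could also be repaired with the locally undistorted hypothesis (it makes the two topologies on $Y$ coincide, so the cocompactness transfers), but the paper's direct argument is shorter.
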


\begin{proof}
	Let $(x_n)$ be a bounded sequence of points of $\dot X$.
	We want to prove that $(x_n)$ has an accumulation point.
	By \autoref{res: cone-off - bounded projection}, $(p(x_n))$ is a bounded sequence of points of $X$.
	According to \autoref{res: finitely many coset in a ball} there are only finitely many $Y \in \mathcal Y$ such that $Y$ contains a point of $(p(x_n))$.
	By taking if necessary a subsequence we can assume that one of the two following assertions holds.
	\begin{enumerate}
		\item For every $n \in \N$, $x_n$ belong to $X$.
		\item There exists $Y \in \mathcal Y$ such that for every $n \in \N$, $x_n$ belongs to $Z(Y)$.
	\end{enumerate}
	Assume first that all the points $x_n$ belong to $X$.
	By \autoref{res: metric cone-off and base} $(x_n)$ is bounded as a sequence of $X$.
	Since $X$ is proper, $(x_n)$ admits an accumulation point.
	However $X$ endowed with $\distV[X]$ or $\distV[\dot X]$ has the same topology.
	Therefore as a sequence of $\dot X$, $(x_n)$ also admits an accumulation point.
	Assume now that there exists $Y$ such that all the points $x_n$ belong to $Z(Y)$.
	For every $n \in \N$, we write $(y_n,r_n)$ for the point $x_n$.
	Since $(x_n)$ is bounded, $(y_n)$ is a bounded sequence of $Y \subset X$ and $(r_n)$ a bounded sequence of $\R_+$.
	By taking a subsequence if necessary we can assume that $(y_n)$ as a sequence of points of $X$ converges to a point $y$ and $(r_n)$ converges to a non-negative number $r$.
	Since $Y$ is closed $y$ is a point of $Y$.
	Thus $x = (y,r)$ defines a point of $Z(Y)$.
	Recall that we assumed $Y$ to be locally undistorted.
	Consequently $(y_n)$ converges to $y$ not only for the metric $\distV[X]$ but also for $\distV[Y]$.
	The map $Z(Y) \hookrightarrow \dot X$ being $1$-Lispchitz we get that for every $n \in \N$,
	\begin{equation*}
	 	 \cosh\dist[\dot X]{x_n}x  \leq \cosh\dist[Z(Y)] {x_n}x = \cosh(r_n-r) + \frac 12e^{-(r_n+r)}\dist[Y]{y_n}y^2.
	\end{equation*}
	Hence $(x_n)$ converges to $x$.
	Finally every bounded sequence of points of $\dot X$ admits an accumulation point, hence $\dot X$ is proper.
	On the other hand we know that $\dot X$ is a length space.
	By Hopf-Rinow Theorem, $\dot X$ is geodesic \cite[Chapter I.3, Theorem 3.7]{BriHae99}.
\end{proof}

\paragraph{}
The action of $G$ on $X$ naturally extends by homogeneity into an action of $G$ on $\dot X$.
If $x = (y,r)$ is a point of the cone $Z(Y)$ over $Y \in \mathcal Y$ and $g$ and element of $G$, then $gx$ is the point of $Z(gY)$ defined by $gx = (gy,r)$.

\begin{prop}
\label{res: proper action on the cone-off}
	The group $G$ acts properly on the cone-off $\dot X$.
\end{prop}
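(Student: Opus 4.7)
The plan is to exploit the $G$-equivariance of the radial projection $p : \dot X \to X$ and reduce the properness of the $G$-action on $\dot X$ to the properness of the $G$-action on $X$ already assumed in the hypothesis. Observe that $p$ is $G$-equivariant by the very definition of the extended action: if $x = (y,r) \in Z(Y)$, then $gx = (gy,r) \in Z(gY)$, so $p(gx) = gy = gp(x)$; on $X$ the map $p$ is the identity, so equivariance is obvious. Hence $p$ sends any $G$-orbit in $\dot X$ to a $G$-orbit in $X$.

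The first step is to show that small balls of $\dot X$ project to sets in $X$ whose diameter tends to $0$. When the center $x_0$ lies in $X$, this follows directly from \autoref{res: cone-off - bounded projection}: the quantity $2(e^{r/2}+1)\sinh(r/2)$ appearing in its proof tends to $0$ as $r\to 0$, and so $p(B_{\dot X}(x_0,r)) \subset B_X(x_0, \eta(r))$ for an explicit function $\eta$ with $\eta(r)\to 0$. When $x_0 = (y_0,r_0) \in Z(Y)\setminus Y$, I would choose $r < r_0$; then by \autoref{res: metric cone-off and horocone coincide} we have $B_{\dot X}(x_0,r) = B_{Z(Y)}(x_0,r)$, and a direct unwinding of the horocone distance formula (\ref{eqn: def metric horocone}) yields, for every $x' = (y',r')$ in that ball, the bound $\dist[X]{y'}{y_0} \leq 2 e^{r_0 + r/2}\sinh(r/2)$, which again vanishes as $r\to 0$ for fixed $r_0$.

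The second step is the reduction itself. Given $x_0 \in \dot X$, let $y_0 = p(x_0) \in X$. By the properness of the $G$-action on $X$, there exists $\rho > 0$ such that the set
\begin{equation*}
	F = \set{g \in G}{gB_X(y_0,\rho) \cap B_X(y_0,\rho) \neq \emptyset}
\end{equation*}
is finite. Using Step 1, choose $r > 0$ small enough that $p(B_{\dot X}(x_0,r)) \subset B_X(y_0,\rho)$. Suppose $g \in G$ satisfies $gB_{\dot X}(x_0,r) \cap B_{\dot X}(x_0,r) \neq \emptyset$, and pick $w$ in this intersection, so that both $w$ and $g^{-1}w$ lie in $B_{\dot X}(x_0,r)$. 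Projecting and using equivariance gives $p(w) \in B_X(y_0,\rho)$ and $gp(g^{-1}w) = p(w)$ with $p(g^{-1}w) \in B_X(y_0,\rho)$, hence $gB_X(y_0,\rho) \cap B_X(y_0,\rho) \neq \emptyset$, i.e. $g \in F$. Thus $\set{g \in G}{gB_{\dot X}(x_0,r) \cap B_{\dot X}(x_0,r) \neq \emptyset}$ is finite, which is the required properness.

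The only delicate point — which I expect to be the main obstacle — is the quantitative estimate in Step 1 for centers located in $Z(Y)\setminus Y$, since the statement of \autoref{res: cone-off - bounded projection} is only given for a center in $X$. However, because the radial structure of the horocone is completely explicit and the distance formula (\ref{eqn: def metric horocone}) is available, the required bound is a short direct computation using the already-established \autoref{res: horocone - projection} (applied after moving down to a base point of $Y$) combined with \autoref{res: metric cone-off and horocone coincide} to ensure the ball stays inside a single horocone. Everything else is bookkeeping.
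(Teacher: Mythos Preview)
Your argument is correct and uses the same core idea as the paper: the $G$-equivariance of the radial projection $p:\dot X\to X$ reduces properness on $\dot X$ to properness on $X$. The difference is in how much work you do to make that reduction.

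The paper's proof is shorter because it never shrinks $r$. It simply observes, via \autoref{res: cone-off - bounded projection}, that $S=p\bigl(B_{\dot X}(x,r)\bigr)$ is a \emph{bounded} subset of $X$ for \emph{any} $r>0$; if $gB_{\dot X}(x,r)\cap B_{\dot X}(x,r)\neq\emptyset$ then $gS\cap S\neq\emptyset$; and since $X$ is proper and the $G$-action on $X$ is proper, the set of such $g$ is finite. This uses the standard fact that on a proper metric space the ``there exists some small ball'' definition of a proper action upgrades automatically to ``for every bounded $S$, only finitely many $g$ satisfy $gS\cap S\neq\emptyset$''.

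Your Step~1 --- the quantitative estimates on the diameter of $p(B_{\dot X}(x_0,r))$ and the case split according to whether $x_0\in X$ or $x_0\in Z(Y)\setminus Y$ --- is therefore unnecessary: it is already packaged inside \autoref{res: cone-off - bounded projection}, and you do not need the diameter to be \emph{small}, only \emph{finite}. The ``delicate point'' you flag is not an obstacle at all once you use boundedness rather than smallness. What your version buys is that it works directly from the weak pointwise definition of properness without invoking the equivalence with the bounded-set formulation; the paper trades that for brevity by appealing to properness of $X$.
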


\begin{proof}
	Let $x$ be a point of $\dot X$ and $r$ a positive number.
	We denote by $P$ the set of elements $g \in G$ such that $g\ball xr \cap \ball xr \neq \emptyset$.
	According to \autoref{res: cone-off - bounded projection} the projection $p : \dot X \rightarrow X$ maps $\ball xr$ onto a bounded subset $S$ of $X$.
	By construction for every $g \in P$, $gS \cap S \neq \emptyset$.
	However, $X$ is proper and $G$ acts properly on it.
	Therefore $P$ is necessarily finite.
	Consequently, $G$ acts properly on $\dot X$.
\end{proof}

\begin{prop}
\label{res: parabolic points}
	Let $Y \in \mathcal Y$.
	We denote by $Z^-(Y)$ the set of all points $x = (y,r)$ of $Z(Y)$ with $r > L$.
	Then $Z^-(Y)$ is an open horoball centered at a point $\xi \in \partial \dot X$.
	Moreover the stabilizer of $\xi$ which is exactly $\stab Y$ does not contain any hyperbolic element of $G$.
\end{prop}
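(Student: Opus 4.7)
Fix a base point $y_0 \in Y$ and consider the map $\rho : \R_+ \rightarrow Z(Y)$, $\rho(t)=(y_0,t)$. Formula~(\ref{eqn: def metric horocone}) shows that $\rho$ is a unit-speed geodesic of $Z(Y)$, and \autoref{res: metric cone-off and horocone globally coincide} gives $\dist[\dot X]{\rho(s)}{\rho(t)} = |s - t|$ whenever $s, t > L$. Hence the restriction $\rho|_{[L,+\infty)}$ is a genuine geodesic ray of $\dot X$ and determines a point $\xi \in \partial \dot X$. To verify that $Z^-(Y)$ is an open horoball centered at $\xi$, I compute the Buseman function $h$ associated with this ray. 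Using again \autoref{res: metric cone-off and horocone globally coincide} together with \autoref{res: horocone - horoball}, for $x=(y,r) \in Z(Y)$ with $r > L$ one obtains $h(x) = L - r < 0$. For $x \notin Z^-(Y)$, the set $Z^-(Y)$ is open in $\dot X$ with topological frontier $Y\times\{L\}$; every continuous path from $x$ to $\rho(t)$ must cross this frontier at some point $(y',L)$, and the subpath from $(y',L)$ to $\rho(t)$ has length at least $t-L$ (by~(\ref{eqn: def metric horocone}), using \autoref{res: metric cone-off and horocone globally coincide} in the limit $r\to L^+$). Taking the limsup yields $h(x) \geq 0$, which combined with the previous case shows that $Z^-(Y)$ is a horoball in the sense of \autoref{def: horoball}.

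Next I identify the stabilizer of $\xi$. The inclusion $\stab Y \subseteq \stab \xi$ is immediate: for $g \in \stab Y$, the ray $t \mapsto g\rho(t) = (gy_0,t)$ lies in $Z(Y)$ and satisfies $\dist[Z(Y)]{g\rho(t)}{\rho(t)} \to 0$ by~(\ref{eqn: def metric horocone}), so $g\xi = \xi$. Conversely, suppose $gY \neq Y$ in $\mathcal Y$. Then $\rho(t) \in Z^-(Y)$ while $g\rho(t) \in Z^-(gY)$, and these two open horoballs are disjoint in $\dot X$. Any path joining them must exit $Z^-(Y)$ through $Y\times\{L\}$ and enter $Z^-(gY)$ through $gY\times\{L\}$, each traversal having length at least $t-L$ by the argument of the previous paragraph, so $\dist[\dot X]{\rho(t)}{g\rho(t)} \geq 2(t-L)$. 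The Gromov product $\gro{\rho(t)}{g\rho(t)}{e}$ at any fixed basepoint $e$ therefore stays bounded as $t \to \infty$, so the two rays are not asymptotic and $g\xi \neq \xi$.

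Finally, any $g \in \stab \xi = \stab Y$ has vanishing translation length on $\dot X$: by \autoref{res: metric cone-off and horocone globally coincide}, $\dist[\dot X]{g\rho(r)}{\rho(r)} = \dist[Z(Y)]{(gy_0,r)}{(y_0,r)}$ for $r > L$, and~(\ref{eqn: def metric horocone}) shows that the right-hand side tends to $0$ as $r \to +\infty$. Therefore $\len g = 0$, so $\len[stable] g = 0$ by \autoref{res: translation lenghts}, and $g$ cannot be hyperbolic. The most delicate step of the plan is the lower bound $\dist[\dot X]{\rho(t)}{g\rho(t)} \geq 2(t-L)$ when $gY \neq Y$: it relies crucially on \autoref{res: metric cone-off and horocone globally coincide} to rule out geodesic shortcuts in $\dot X$ passing through the deep part of a different horocone, which is the same rigidity that underlies the horoball computation.
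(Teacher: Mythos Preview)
Your proof is correct and follows essentially the same strategy as the paper's: the same ray $\rho$, the same Buseman computation via \autoref{res: metric cone-off and horocone globally coincide} and \autoref{res: horocone - horoball}, and the same translation-length argument for the absence of hyperbolic elements. The only cosmetic differences are that, for $h(x)\geq 0$ when $x\notin Z^-(Y)$, the paper obtains $\dist{x}{\rho(t)}\geq t$ directly from \autoref{res: metric cone-off and horocone coincide} rather than via your path-crossing argument through the frontier $Y\times\{L\}$, and for $\stab\xi\subseteq\stab Y$ it invokes the bounded Hausdorff distance of asymptotic geodesic rays instead of your explicit lower bound $\dist{\rho(t)}{g\rho(t)}\geq 2(t-L)$; these are equivalent formulations of the same idea.
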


\begin{proof}
	Let us fix a point $y_0$ in $Y$.
	We denote by $x_0$ the point of $Z(Y)$ defined by $x_0 = (y_0, L)$.
	Let $\rho : \R_+ \rightarrow \dot X$ be the function that sends $t \in \R_+$ to the point $(y_0,L+t)$ of $Z(Y)$.
	By construction, $\rho$ is a geodesic ray of $Z(Y)$.
	Thus, by \autoref{res: metric cone-off and horocone globally coincide} it is also a geodesic ray of $\dot X$.
	Therefore it defines a point $\xi = \lim_{t \rightarrow + \infty} \rho(t)$ of $\partial \dot X$.
	Let $h : \dot X \rightarrow \R$ be the Buseman function associated to $\rho$.
	\begin{equation*}
		h(x) = \limsup_{t \rightarrow + \infty} \dist x{\rho(t)} -t.
	\end{equation*}
	Let $x$ be a point of $\dot X$.
	Assume first that $x$ does not belong to $Z^-(Y)$.
	In particular for every $t \in \R_+$, we have $\dist x{\rho(t)} \geq \dist {x_0}{\rho(t)} = t$.
	Consequently $h(x) \geq 0$.
	Assume now that $x$ is a point of $Z^-(Y)$ of the form $x = (y,r)$.
	In particular $r > L$.
	By \autoref{res: metric cone-off and horocone globally coincide} the metric of $Z(Y)$ and $\dot X$ coincide on $Z^-(Y)$.
	It follows from \autoref{res: horocone - horoball} that $h(x) = L - r < 0$.
	This exactly means that $Z^-(Y)$ is a horoball centered at $\xi$.
	
	\paragraph{}By construction every element of $\stab Y$ fixes $\xi$.
	Let us prove now the other inclusion.
	Let $g$ be an element of $G$ such that $g\xi = \xi$.
	By construction for every $t \in \R_+$, $g\rho(t)$ is defined to be the points of $Z(gY)$ given by $(gy,L+t)$.
	However, since $g\xi = \xi$, the Hausdorff distance between the geodesic rays $\rho$ and $g\rho$ is bounded.
	It forces $g$ to stabilizes $Y$.
	
	\paragraph{}Let $g$ be an element of $\stab Y$.
	Let $r \in [L, + \infty)$.
	We denote by $x_r$ the point of $Z(Y)$ given by $x_r = (y,r)$.
	According to \autoref{res: metric cone-off and horocone globally coincide} we have
	\begin{equation*}
		\cosh \dist {gx_r}{x_r} = 1 + \frac 12 e^{-2r}\dist {gy}y^2.
	\end{equation*}
	In particular $\lim_{r \rightarrow + \infty}\dist{gx_r}{x_r}=0$.
	It follows that $\len g = 0$ and thus $\len[stable] g = 0$.
	Consequently $\stab Y$ cannot contain a hyperbolic element of $G$.
\end{proof}

\begin{prop}
\label{res: co-compact action after removing horoballs}
	Let $X^+$ be the $L$-neighborhood of $X$ in $\dot X$.
	The action of $G$ on $X^+$ is co-compact.
\end{prop}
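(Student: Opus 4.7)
The plan is to exhibit a compact $K \subset X^+$ whose $G$-translates cover $X^+$. First I would identify $X^+$ concretely. For any point $(y,r) \in Z(Y) \setminus Y$, a path in $\dot X$ from $(y,r)$ to $X$ must enter $Y$ first; by the remark following \autoref{res: metric cone-off and horocone coincide} the length of such a path, while it remains in $Z(Y) \setminus Y$, is the same whether measured in $\dot X$ or in $Z(Y)$, and in $Z(Y)$ the radial distance $\dist[Z(Y)]{(y,r)}{Y}$ equals $r$. Combined with the 1-Lipschitz embedding $Z(Y) \hookrightarrow \dot X$, this gives $\dist[\dot X]{(y,r)}{X} = r$, so
\begin{equation*}
	X^+ = X \cup \bigcup_{Y \in \mathcal Y} \bigl\{ (y,r) \in Z(Y) : 0 \leq r \leq L \bigr\}.
\end{equation*}

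Next I would assemble the compact fundamental set. Since $G$ acts properly cocompactly on $X$, pick a compact $K_0 \subset X$ with $G \cdot K_0 = X$. Since $\mathcal Y / G$ is finite, fix representatives $Y_1,\dots,Y_k$ of the $G$-orbits. By \autoref{res: infinite stabilizer}, each $\stab{Y_i}$ acts cocompactly on $Y_i$; choose a compact $C_i \subset Y_i$ (with respect to the intrinsic length metric $\distV[Y_i]$) such that $\stab{Y_i} \cdot C_i = Y_i$. Define
\begin{equation*}
	K_i = \bigl\{ (y,r) \in Z(Y_i) : y \in C_i,\ r \in [0,L] \bigr\}.
\end{equation*}
The formula (\ref{eqn: def metric horocone}) shows that the horocone metric is continuous in $(y,r)$ with respect to $\distV[Y_i] \times \mathrm{eucl.}$, so $K_i$ is compact. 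Setting $K = K_0 \cup K_1 \cup \dots \cup K_k$ yields a compact subset of $\dot X$.

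Finally, given $x \in X^+$, I would split into two cases. If $x \in X$ then $x \in G \cdot K_0 \subset G \cdot K$. Otherwise $x = (y,r)$ lies in some $Z(Y) \setminus Y$ with $r \leq L$; since $\mathcal Y$ is $G$-invariant there exists $g \in G$ with $gY = Y_i$ for some $i$, and then $gx = (gy,r)$ with $gy \in Y_i = \stab{Y_i}\cdot C_i$. Writing $gy = hc$ with $h \in \stab{Y_i}$ and $c \in C_i$ gives $x = g^{-1}h \cdot (c,r) \in G \cdot K_i$. The main subtlety I expect to encounter is the verification that $K_i$ is genuinely compact in $\dot X$: this requires knowing that a cocompact fundamental set for $\stab{Y_i}$ on $Y_i$ (produced in \autoref{res: infinite stabilizer} using the $X$-metric) is also compact with respect to the intrinsic metric $\distV[Y_i]$ — this is where the locally undistorted hypothesis on $Y_i$ plays its role, since it forces the two topologies on $Y_i$ to agree locally and hence on compact sets.
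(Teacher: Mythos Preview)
Your proposal is correct and follows essentially the same approach as the paper: the paper phrases the argument in terms of the quotient $X^+/G$ being assembled from $X/G$ together with the finitely many pieces $(Z(Y)\setminus Z^-(Y))/\stab Y \cong (Y/\stab Y)\times[0,L]$, while you unpack the same decomposition into an explicit compact fundamental set $K = K_0 \cup K_1 \cup \dots \cup K_k$. You are in fact more careful than the paper on one point: the paper asserts the homeomorphism with $(Y/\stab Y)\times[0,L]$ and its compactness without comment, whereas you correctly flag that the horocone metric is built from the intrinsic metric $\distV[Y_i]$ and that the locally undistorted hypothesis is what guarantees a compact set for $\distV[X]$ is still compact for $\distV[Y_i]$.
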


\begin{proof}
	The quotient space $X^+/G$ can be obtained by attaching on $X/G$ the sets $(Z(Y)\setminus Z^-(Y))/\stab Y$ for $Y \in \mathcal Y/G$.
	Since $G$ acts co-compactly on $X$, $X/G$ is compact.
	On the other hand, for every $Y \in \mathcal Y$, $\stab Y$ acts co-compactly on $Y$ (see \autoref{res: infinite stabilizer}).
	Consequently $(Z(Y)\setminus Z^-(Y))/\stab Y$ which is homeomorphic to  $(Y/\stab Y) \times \intval 0{L}$ is also compact.
	Recall that $\mathcal Y/G$ is finite.
	Hence $X^+/G$ is obtained by attaching together finitely many compact sets.
	Therefore it is compact.
\end{proof}

\begin{prop}
\label{res: cone-off - G relatively hyperbolic}
	Assume that every $Y \in \mathcal Y$ is unbounded.
	Let us identify $\mathcal Y/G$ with a set of representatives of the $G$-orbits of the elements of $\mathcal Y$.
	Then the group $G$ is hyperbolic relative to the collection $\set{\stab {Y}}{Y \in \mathcal Y/G}$.
\end{prop}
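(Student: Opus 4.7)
The plan is to verify \autoref{def: relatively hyperbolic} directly, taking $\dot X = \dot X(\mathcal Y)$ as the ambient proper geodesic hyperbolic space and $\mathcal Y' = \set{Z^-(Y)}{Y \in \mathcal Y}$ as the collection of horoballs. All the work has been done in the preceding lemmas, so the argument is essentially one of assembly.

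First I would collect the structural properties of $\dot X$: $\delta$-hyperbolicity is a standing assumption of \autoref{sec: group action on a cone-off space}, properness and geodesicity come from \autoref{res: cone-off proper}, and the homogeneous extension of the $G$-action to $\dot X$ is proper by \autoref{res: proper action on the cone-off}. Next I would check that $\mathcal Y'$ is a $G$-invariant family of pairwise disjoint open horoballs. The $G$-invariance is immediate from $g \cdot Z^-(Y) = Z^-(gY)$ together with the $G$-invariance of $\mathcal Y$. Pairwise disjointness follows from the construction of the cone-off: for $Y \neq Y'$ the horocones $Z(Y)$ and $Z(Y')$ meet only in the base $X$, whereas $Z^-(Y)$ sits strictly above the base at levels $r > L$. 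That each $Z^-(Y)$ is an open horoball centered at a point $\xi_Y \in \partial \dot X$ is precisely \autoref{res: parabolic points}.

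Third, one observes that $\dot X \setminus \bigcup_{Y \in \mathcal Y} Z^-(Y)$ coincides by construction with the $L$-neighborhood $X^+$ of $X$ in $\dot X$, on which $G$ acts cocompactly by \autoref{res: co-compact action after removing horoballs}. Finally I would identify the horoball stabilizers. By homogeneity of the action, $\stab Z^-(Y) = \stab Y$. \autoref{res: parabolic points} then says that $\stab Y$ coincides with the pointwise stabilizer of the centering point $\xi_Y$ and contains no hyperbolic element, while \autoref{res: infinite stabilizer} gives that $\stab Y$ is infinite since $Y$ is unbounded; hence $\stab Y$ is a maximal parabolic subgroup of $G$ acting on $\dot X$. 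Two horoballs in $\mathcal Y'$ lie in the same $G$-orbit if and only if their base subsets do, so the $G$-orbits of $\mathcal Y'$ are in bijection with $\mathcal Y/G$, and $\set{\stab Y}{Y \in \mathcal Y/G}$ is the required set of representatives.

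The only genuine ``obstacle'' here is bookkeeping and the verification that the candidate horoballs $Z^-(Y)$ are mutually disjoint and match the three conditions of \autoref{def: relatively hyperbolic}; all of the real hyperbolic and dynamical content (global hyperbolicity of $\dot X$, properness of the $G$-action, recognition of $Z^-(Y)$ as an actual horoball, and cocompactness on the truncated space $X^+$) is already packaged in the preceding propositions. No additional estimates are needed.
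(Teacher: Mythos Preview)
Your proposal is correct and matches the paper's own proof essentially line for line: both arguments verify \autoref{def: relatively hyperbolic} by assembling \autoref{res: cone-off proper}, \autoref{res: proper action on the cone-off}, \autoref{res: parabolic points}, \autoref{res: infinite stabilizer}, and \autoref{res: co-compact action after removing horoballs}, with the disjointness of the $Z^-(Y)$ coming directly from the construction. There is nothing to add.
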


\begin{proof}
The proof recollects all the results of this section.
First it follows from \autoref{res: cone-off proper} and \autoref{res: proper action on the cone-off} that $\dot X$ is proper geodesic space and $G$ acts properly on it.
Let $Y \in \mathcal Y$.
According to \autoref{res: parabolic points}, $Z^-(Y)$ is an open horoball centered at a point $\xi$ of $\partial \dot X$.
Moreover since $Y$ is unbounded, the stabilizer of $\xi$ is infinite (\autoref{res: infinite stabilizer}) and does not contain any hyperbolic element (\autoref{res: parabolic points}).
By construction for every distinct $Y,Y' \in \mathcal Y$, $Z^-(Y)$ and $Z^-(Y')$ do not intersect.
Note that $X^+$ is exactly the space obtained by removing from $\dot X$ all the horoballs $Z^-(Y)$ where $Y \in \mathcal Y$.
By \autoref{res: co-compact action after removing horoballs}, the action of $G$ on $X^+$ is co-compact.
It follows from these observations that the group $G$ is relatively hyperbolic.
Moreover its maximal parabolic subgroups are exactly the stabilizers $\stab Y$ for $Y \in \mathcal Y$.
\end{proof}


\subsection{Proof of \autoref{res: main theorem weak geometric form}}
\label{sec: cone-off - proof}

\paragraph{}
In this section we complete the proof of \autoref{res: main theorem weak geometric form}.
To that end we consider the following data.
Let $G$ be a group.
Let $\epsilon >0$.
For every $n \in \N$, we are given
\begin{enumerate}
	\item a pointed proper length space $(X_n,e_n)$ on which $G$ acts properly co-compactly by isometries such that the diameter of $X_n/G$ is uniformly bounded.
	\item a $G$-invariant collection $\mathcal Y_n$ of closed unbounded locally undistorted subsets of $X_n$ such that $X_n$ is $\epsilon$-simply-connected relative to $\mathcal Y_n$ and  $\mathcal Y_n/G$ is finite.
\end{enumerate}
Let $\omega$ be a non-principal ultra-filter.
We assume that $(X_n,e_n)$ is asymptotically tree-graded with respect to $(\mathcal Y_n)$.
In addition, we suppose that the distortion of $(\mathcal Y_n)$ is $\omega$-controlled.

\begin{proof}[Proof of \autoref{res: main theorem weak geometric form}]
	For every $n \in \N$, we denote by $\dot X_n$ the cone-off of $X_n$ relative to $\mathcal Y_n$.
	Applying \autoref{res: tree-graded ultra-limit change of base point}, for every sequence of base points $(e'_n) \in \Pi_{n \in \N}X_n$, $(X_n,e'_n)$ is asymptotically tree-graded with respect to $(\mathcal Y_n)$.
	Thus we can apply \autoref{res: cone-off curvature general case}: there exists a subset $A$ of $\N$ with $\omega(A) = 1$ such that for every $n \in A$,  the cone-off $\dot X_n$ is $\delta$-hyperbolic where $\delta = 900 \boldsymbol\delta$.
	Let $n \in A$.
	Let us identify $\mathcal Y_n/G$ with a set of representatives of the $G$-orbits of the elements of $\mathcal Y_n$.
	We assumed that $X_n$ was proper and $G$ acts properly co-compactly by isometries on it.
	Moreover $\mathcal Y_n$ has finitely many $G$-orbits and all its elements are unbounded.
	It follows from \autoref{res: cone-off - G relatively hyperbolic} that $G$ is hyperbolic relative to the collection $\set{\stab {Y}}{Y \in \mathcal Y_n/G}$
\end{proof}


\section{Proof of \autoref{res: main theorem geometric form}}
\label{sec: spaces with a tree graded asymptotic cone}

This section is dedicated to the proof of  \autoref{res: main theorem geometric form}.
We start with the following definitions.

\begin{defi}
	An \emph{$\epsilon$-separated} set $A$ in $X$ is a set such that for every $x,y\in A$, $\dist xy\geq \epsilon$.
	A \emph{$\delta$-net} in $X$ is a set $B$ such that  every point of $X$ lies in the $\delta$-neighborhood of $B$.
	A \emph{$(\delta, \epsilon)$-net} is a $\delta$-net which is $\epsilon$-separated.
\end{defi}

\rem A maximal $\delta$-separated set is a net and will be referred to as a \emph{$\delta$-snet}.

\begin{defi}
	 We say that a subset $Y$ of $X$ has \emph{bounded geometry} is for every $x \in X$, for every $r \geq 0$, $Y \cap \ball xr$ is finite.
\end{defi}

\begin{lemm}
\label{res: snet}
	Let $G$ be a group acting properly cocompactly by isometries on a proper metric space $X$.
	For every $\delta>0$ there exists a $G$-invariant $\delta$-snet in $X$ with bounded geometry.
\end{lemm}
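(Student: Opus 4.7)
The plan is to obtain the snet by applying Zorn's lemma to the poset of $G$-invariant $\delta$-separated subsets of $X$, and then to exploit the proper cocompact action to upgrade maximality into the net property. Concretely, I would let $\mathcal{A}$ denote the collection of $G$-invariant $\delta$-separated subsets of $X$ ordered by inclusion. The empty set belongs to $\mathcal{A}$, and given any chain in $\mathcal{A}$ its union is $G$-invariant and remains $\delta$-separated (since any two distinct points of the union already lie in a common element of the chain). Hence Zorn's lemma provides a maximal element $A \in \mathcal{A}$.

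Next, I would show that $A$ is a $\delta$-net. Suppose for contradiction that $d(x_0, A) > \delta$ for some $x_0 \in X$, where $d(x_0, A) = \inf\{\dist{x_0}{a} : a \in A\}$. Since $G$ acts properly on $X$, the set $\{g \in G : gx_0 \neq x_0 \text{ and } \dist{x_0}{gx_0} < \delta\}$ is finite. Using that $X$ is a length space on which $G$ acts cocompactly, I would then perturb $x_0$ into a nearby point $x$ so that $d(x, A) > \delta$ still holds while moreover $\dist{x}{gx} \geq \delta$ for every $g \in G$ with $gx \neq x$. The orbit $G \cdot x$ is then $\delta$-separated, and for any $a \in A$ and $g \in G$ the $G$-invariance of $A$ yields $\dist{a}{gx} = \dist{g^{-1}a}{x} > \delta$. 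Consequently $A \cup G \cdot x \in \mathcal{A}$ strictly contains $A$, contradicting the maximality of $A$.

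Finally, bounded geometry of $A$ is essentially automatic: for any $x \in X$ and $r \geq 0$, the closed ball of radius $r$ about $x$ is compact by properness of $X$, and any $\delta$-separated subset of a compact set is finite, so $A \cap \ball xr$ is finite. I expect the main obstacle to be the perturbation step: one needs to guarantee the existence of a point $x$ close to $x_0$ whose entire $G$-orbit is internally $\delta$-separated. The finiteness of the collection of ``bad'' group elements provided by properness of the action makes this plausible, but implementing it carefully in the generality of a proper length space with a proper cocompact action is where the geometric content of the lemma really lies.
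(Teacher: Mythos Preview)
The perturbation step you single out as the main obstacle is a genuine gap, and in fact it cannot be carried out in the stated generality. Take $G=\Z$ acting by unit translations on $X=\R$ and $\delta=2$: every nonempty $G$-orbit contains consecutive points at distance $1<\delta$, so the \emph{only} $G$-invariant $\delta$-separated subset of $X$ is $A=\emptyset$, which is maximal in your poset $\mathcal A$ but certainly not a $\delta$-net. More generally, whenever some $g\in G$ has displacement uniformly below $\delta$, no perturbation of $x_0$ can produce a point whose orbit is internally $\delta$-separated, and your contradiction argument collapses. (You also invoke that $X$ is a length space, which is not among the hypotheses of the lemma.)

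The paper takes a different route that sidesteps this difficulty: it passes to the quotient $\bar X=X/G$ equipped with the orbit-distance metric, chooses a $\delta$-snet $\bar S$ there, and lets $S$ be its preimage in $X$. Then $S$ is automatically $G$-invariant and a $\delta$-net, and bounded geometry follows because $\bar S$ is finite (the quotient has bounded diameter) while each orbit meets a given ball in only finitely many points by properness of the action. This construction never requires a single orbit to be $\delta$-separated; separation in $\bar X$ only controls distances between \emph{distinct} orbits. Strictly speaking $S$ may therefore fail to be $\delta$-separated within an orbit, so the word ``snet'' is used somewhat loosely in the paper as well; but the properties actually used downstream --- $G$-invariant $\delta$-net with bounded geometry --- do follow cleanly from the quotient construction.
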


\begin{proof}
	Since $G$ acts properly and co-compactly by isometries on $X$, the distance between orbits in $X$ defines a bounded metric on $\bar X = X/G$, the space of $G$-orbits (see \cite[Proposition 8.5]{BriHae99}).
	Let $\delta >0$.
	We fix a $\delta$-snet $\bar S$ in $\bar X$.
	Let $S$ be the pre-image of $\bar S$ in $X$.
	By construction, $S$ is $G$-invariant $\delta$-net.
	Moreover it is $\delta$-separated.
	In particular $S$ is a closed subset of $X$.
	Since $X$ is proper the intersection of $S$ with any ball is finite.
	Thus $S$ has bounded geometry.
\end{proof}

\begin{proof}[Proof of \autoref{res: main theorem geometric form}]
	Let $G$ be a group acting properly co-compactly by isometries on a proper length space $X$.
	Let $\mathcal Y$ be a $G$-invariant collection of closed locally undistorted subsets of $X$ such that $X$ is $\epsilon$-simply-connected relative to $\mathcal Y$ for some $\epsilon >0$ and $\mathcal Y/G$ is finite.
	In particular there exists $\delta>0$ such that for every $Y \in \mathcal Y$, for every $y \in Y$ the natural embedding $Y \hookrightarrow X$ induces an isometry from $\ball y\delta$ onto its image.
	We assume that $X$ is sparsely asymptotically tree-graded with respect to $\mathcal Y$.
	Thus there exists a non-principal ultra-filter $\omega$, a sequence $(e_n)$ of points of $X$ and a sequence $(d_n)$ of real numbers  diverging to infinity with the following property.
	For every $n \in \N$ let $X_n = (1/d_n)X$ and $\mathcal Y_n = \{ (1/d_n) Y \mid Y\in \mathcal Y\}$.
	Then $(X_n,e_n)$ is sparsely tree-graded with respect to $(\mathcal Y_n)$.

\paragraph{}

 Since $\mathcal Y/ G$ is finite, there is a uniform bound on the diameter of the bounded sets of $\mathcal Y$.  Hence for any choice of $Y_n\in \mathcal Y_n$, $\limo Y_{n}$ is a point or $Y_n$ is unbounded \oas.  This implies that the sparsely tree-grading structure remains unchanged when only considering the unbounded sets in $\mathcal Y$.  Since $G$ acts properly, the stabilizer of any bounded subset must be finite.  Hence $G$ is hyperbolic relative to $\set{\stab Y}{Y \in \mathcal Y/G \text{ and } Y \text{ is unbounded}}$ if and only if  $G$ is hyperbolic relative to $\set{\stab Y}{Y \in \mathcal Y/G}$.  Thus we will assume that each element of $\mathcal Y$ (and hence $\mathcal Y_n$) is unbounded.
 
\paragraph{}

	We would like to apply \autoref{res: main theorem weak geometric form}, however the distortion of $(\mathcal Y_n)$ may not be $\omega$-controlled (see \autoref{def: distortion omega controlled}).
	To handle this difficulty we will substitute $X_n$ for a slightly large space $\tilde X_n$ which will fulfilled all the assumptions of \autoref{res: main theorem weak geometric form}.
	This construction will strongly use the fact that $(X_n,e_n)$ is sparsely tree-graded with respect to $(\mathcal Y_n)$.
	More precisely we proceed as follows.
		
	\paragraph{}
	Let $(\delta_n)$ be a sequence converging to $0$.
	Fix $\eta > 0$.
	Let $n \in \N$.
	According to \autoref{res: snet}, $X_n$ admits a $G$-invariant $\delta_n$-snet with bounded geometry.
	We write $S_n$ for the set of points in this $\delta_n$-snet which are $\delta_n$-close to some $Y \in \mathcal Y_n$.
	We denote by $T_n$ the set of unordered pairs $(x,y)$ of points of $S_n$ satisfying the folowing properties.
	\begin{itemize}
		\item There exists $Y \in \mathcal Y_n$ such that $x$ and $y$ lie in the $\delta_n$-neighborhood of $Y$.
		\item The distance $\dist xy$ is a most $\eta$.
	\end{itemize}
	For every such pair $(x,y) \in T_n$, we attach a metric arc $a_{x,y}$ to $X_n$ of length $\dist xy$ (measured with the distance of $X_n$) whose endpoints are $x$ and $y$.
	We denote by $\tilde X_n$ be the resultant space, i.e.
	\begin{equation*}
		\tilde X_n = X_n\sqcup \set{ a_{x,y} }{ (x,y)\in  T_n}/  \sim
	\end{equation*}
	where $\sim$ is the equivalence relation that identifies for every $(x,y) \in T_n$ the endpoints of $a_{x,y}$ respectively with $x$ and $y$.
	Then $\tilde X_n$ has an natural metric such that the quotient map from $X_n\sqcup \set{ a_{x,y} }{ (x,y)\in  T_n}$  to $\tilde X_n$ is an isometry when restricted to $X_n$ or $ \set{ a_{x,y} }{ (x,y)\in  T_n}$.
	We will generally identify $X_n$ and $ \set{ a_{x,y} }{ (x,y)\in  T_n}$ with their image in $\tilde X_n$.
	Recall that $X_n$ is proper and  $S_n$ has bounded geometry, therefore $\tilde X_n$ is a proper metric space.
	
	\paragraph{}
	We now define a collection $\tilde {\mathcal Y}_n$ of subsets of $\tilde X_n$ which is in one-to-one correspondence with $\mathcal Y_n$.
	If $Y$ is an element of $\mathcal Y_n$, then $\tilde Y$ is the union (in $\tilde X_n$) of $Y$, all the points of $S_n$ in the $\delta_n$-neighborhood of $Y$ and all the arcs $a_{x,y}$ such that $x$ and $y$ are two points lying in the $\delta_n$-neighborhood of $Y$ such that $\dist xy \leq \eta$.
	In other words
	\begin{equation*}
		\tilde Y = Y \cup \left(Y^{+\delta_n} \cap S_n\right) \cup \set{a_{x,y}}{(x,y) \in T_n, \text{ and } x,y \in Y^{+\delta_n}}.
	\end{equation*}
	In particular $Y$ isometrically embeds into $\tilde Y$.
	The collection $\tilde {\mathcal Y}_n$ is the family of all subsets $\tilde Y$ obtained in this way.
	The next lemmas investigate the asymptotical properties of the sequence $(\tilde X_n)$.
	
	\begin{lemm}
	\label{res: tilde X simply connected}
		The space $\tilde X_n$ is $\eta$-simply-connected relative to $\mathcal Y_n$ \oas.
	\end{lemm}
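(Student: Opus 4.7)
The plan is to reduce the $\eta$-simple-connectedness of $\tilde X_n$ relative to $\mathcal Y_n$ to that of $X_n$ itself, by trading each traversal of an attached arc for a geodesic in $X_n$ of the same length together with a loop of controlled diameter.

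First, since $X$ is $\epsilon$-simply-connected relative to $\mathcal Y$ and $X_n = (1/d_n)X$, the space $X_n$ is $(\epsilon/d_n)$-simply-connected relative to $\mathcal Y_n$. As $d_n \to +\infty$, the set of $n$ with $\epsilon/d_n < \eta$ has $\omega$-measure $1$; I restrict to such $n$, where $X_n$ is already $\eta$-simply-connected relative to $\mathcal Y_n$.

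Now let $\gamma$ be a based loop in $\tilde X_n$ with basepoint in $X_n$ (which can be arranged by a preliminary retraction, since every arc $a_{x,y}$ deformation retracts onto $\{x,y\}$). A general position argument on the one-dimensional arcs lets me homotope $\gamma$, rel basepoint, so that on each arc $a_{x,y}$ it is either identically constant or a monotone full traversal. Hence I can write
\[
\gamma \;=\; \alpha_0 \cdot a_1^{\pm 1} \cdot \alpha_1 \cdots a_k^{\pm 1} \cdot \alpha_k,
\]
where each $\alpha_j$ is a path in $X_n$ and each $a_i = a_{x_i,y_i}$ is an attached arc. Since $X_n$ is a proper length space, Hopf--Rinow supplies a geodesic $\beta_i$ in $X_n$ from $x_i$ to $y_i$ of length $\dist[X_n]{x_i}{y_i} \leq \eta$. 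Replacing each $a_i^{\pm 1}$ by $\beta_i^{\pm 1}$ produces a loop $\gamma'$ entirely contained in $X_n$, and a standard homotopy bookkeeping yields
\[
\gamma \;\simeq\; \Bigl(\prod_{i=1}^{k} \sigma_i \cdot L_i \cdot \sigma_i^{-1}\Bigr)\cdot \gamma',
\]
where $\sigma_i$ is the initial segment of the replaced loop from the basepoint to $x_i$ and $L_i := a_i \cdot \beta_i^{-1}$ is a loop of length at most $2\dist[X_n]{x_i}{y_i} \leq 2\eta$. Since the diameter of a loop in any metric space is bounded above by half its length, each $L_i$ has diameter at most $\eta$, and the based loop $\sigma_i \cdot L_i \cdot \sigma_i^{-1}$ is freely homotopic to $L_i$.

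Finally, because $\gamma'$ lies in $X_n$ and $X_n$ is $\eta$-simply-connected relative to $\mathcal Y_n$, I decompose $\gamma'$ as a product of based loops each freely homotopic to a loop of diameter at most $\eta$ or to a loop contained in some $Y \in \mathcal Y_n$. Substituting this decomposition into the identity above yields the required decomposition of $\gamma$. The main delicate point is the diameter control on $L_i$: it succeeds precisely because the arcs are attached with length equal to $\dist[X_n]{x}{y}$ and $X_n$ is geodesic, so that the relative simple-connectedness lifts from $X_n$ to $\tilde X_n$ with no loss of constant.
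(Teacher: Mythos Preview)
Your argument is correct and follows essentially the same route as the paper's proof: both replace each traversal of an attached arc $a_{x,y}$ by a geodesic in $X_n$ of the same length, producing auxiliary loops of diameter at most $\eta$, and then appeal to the fact that $X_n$ is $(\epsilon/d_n)$-simply-connected relative to $\mathcal Y_n$ (hence $\eta$-simply-connected \oas). Your write-up simply makes explicit the homotopy bookkeeping and the diameter bound that the paper leaves implicit.
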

	
	\begin{proof}
		Let $n \in \N$.
		The space $\tilde X_n$ is obtained by attaching edges of length at most $\eta$ to $X_n$ which is geodesic.
		Therefore every loop $\gamma$ in $\tilde X_n$ is homotopic to a product $\gamma' \cdot \gamma_1 \cdots \gamma_\ell$ where $\gamma'$ is a loop in $X_n$ and the $\gamma_i$ are freely homotopic to a loop of diameter at most $\eta$.
		Thus the conclusion follows from the fact that $X_n$ is $\epsilon/d_n$-simply-connected relative to $\mathcal Y_n$.
	\end{proof}

	\begin{lemm}
	\label{res: tilde X tree-graded}
		The sequence $(\tilde X_n,e_n)$ is sparsely tree-graded with respect to $(\tilde{\mathcal Y}_n)$.
	\end{lemm}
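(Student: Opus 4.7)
The approach is to verify the two axioms of sparse tree-grading for $(\tilde X_n, e_n)$ with respect to $(\tilde{\mathcal Y}_n)$ by transferring the corresponding property of $(X_n, e_n, \mathcal Y_n)$ through the simple cellular modification $X_n \leadsto \tilde X_n$. Three preliminary observations set the stage. First, since each arc $a_{x,y}$ has length exactly $\dist[X_n]{x}{y}$, no shortcut is created and the inclusion $X_n \hookrightarrow \tilde X_n$ is an isometric embedding; hence $\limo X_n$ embeds isometrically in $\limo \tilde X_n$. Second, the interior of each attached arc is isolated in $\tilde X_n$: the interiors of distinct arcs are disjoint, and for a point $p$ in the interior of $a_{x,y}$ the ball of radius $\min(\dist p x, \dist p y)$ around $p$ stays inside $a_{x,y}$. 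Third, every point of $\tilde Y_n$ lies within $\eta/2 + \delta_n$ of $Y_n$; combining this with $\delta_n \to 0$ and the closedness of $\limo Y_n$ yields the key identity $\limo \tilde Y_n \cap \limo X_n = \limo Y_n$.

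For the first axiom, assume $Y_n \neq Y'_n$ ultrafilter-almost surely and let $p, p'$ be two distinct common points of $\limo \tilde Y_n$ and $\limo \tilde Y'_n$. I would first show that $p, p' \in \limo X_n$. Suppose on the contrary that, say, $p \notin \limo X_n$; choose representatives $p_n$ inside the interiors of arcs $a_{x_n,y_n} \in \tilde Y_n$ with $\min(\dist{p_n}{x_n}, \dist{p_n}{y_n}) \geq \epsilon_0 > 0$, together with representatives $p'_n \in \tilde Y'_n$ satisfying $\dist{p_n}{p'_n} \to 0$. By arc locality, for $n$ large $p'_n$ must lie on the same arc, and in fact in its interior (otherwise $p$ would coincide with $\limo x_n$ or $\limo y_n$, forcing $p \in \limo X_n$). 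Consequently $a_{x_n,y_n}$ lies in $\tilde Y'_n$, which means $x_n, y_n \in Y_n^{+\delta_n} \cap (Y'_n)^{+\delta_n}$, so $\limo x_n, \limo y_n \in \limo Y_n \cap \limo Y'_n$. By \ref{Tw1} applied to $(X_n, \mathcal Y_n)$ the latter intersection has at most one point, whence $\limo x_n = \limo y_n$, contradicting $\dist{x_n}{y_n} \geq 2 \epsilon_0$. Thus $p, p' \in \limo X_n$, and by the third observation they both lie in $\limo Y_n \cap \limo Y'_n$, which has at most one point, contradicting $p \neq p'$.

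For the second axiom, let $\sigma$ be a simple closed curve in $\limo \tilde X_n$. If $\sigma \subseteq \limo X_n$, then \ref{Tw2} for $(X_n, \mathcal Y_n)$ places it in some $\limo Y_n \subseteq \limo \tilde Y_n$. Otherwise, arc locality together with the simplicity of $\sigma$ forces $\sigma$ to cross each arc it enters from one endpoint to the other, so that $\sigma$ admits a cyclic decomposition $A_1 \cdot \beta_1 \cdots A_k \cdot \beta_k$, where the $A_i$ are attached arcs (with $A_i$ attached to a piece $\limo Y_{i,n}$) and the $\beta_i$ are topological arcs in $\limo X_n$. The crucial tool is the standard consequence of the tree-graded structure, essentially the content of the proof of \autoref{M-transitions}: a topological arc in a tree-graded space with both endpoints in the same piece is entirely contained in that piece (because each connected component of the complement of a piece has a unique entry point into that piece). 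I would use this fact and the single-point intersection property of pieces in $\limo X_n$ to argue inductively that the pieces $\limo Y_{1,n}, \dots, \limo Y_{k,n}$ all coincide with a common piece $\limo Y_n$, and then, again using the same tool, that each $\beta_i$ sits inside $\limo Y_n$; it follows that $\sigma \subseteq \limo Y_n \cup \bigcup_i A_i \subseteq \limo \tilde Y_n$.

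The main obstacle is the inductive coincidence argument for the pieces $\limo Y_{i,n}$ when $k \geq 2$. The natural attempt of replacing each arc $A_i$ by a geodesic $\gamma_i$ in its attaching piece produces a closed loop $\sigma'$ in $\limo X_n$, but $\sigma'$ need not be simple, so \ref{Tw2} for $(X_n, \mathcal Y_n)$ cannot be invoked directly. One must instead exploit the global topology of the \emph{simple} curve $\sigma$ in $\limo \tilde X_n$ to extract, in the hypothetical case of two different attaching pieces, a topological arc in $\limo X_n$ between two points of a single piece that enters and exits through different complementary components, contradicting the unique-entry-point structure of tree-graded spaces.
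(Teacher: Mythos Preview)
Your treatment of $(T_1^\omega)$ is fine and essentially matches the paper's: both arguments reduce to showing that a point of $\limo\tilde Y_n\cap\limo\tilde Y'_n$ lying on an arc must have $\limo x_n=\limo y_n$ because the arc endpoints fall into $\limo Y_n\cap\limo Y'_n$.

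For $(T_2^\omega)$ your approach diverges from the paper's, and the place you yourself flag as the ``main obstacle'' is a genuine gap. Two concrete problems:
\begin{itemize}
\item Your cyclic decomposition $A_1\beta_1\cdots A_k\beta_k$ presupposes that $\sigma$ meets only \emph{finitely} many arcs. Nothing rules out countably many arcs of lengths tending to zero, so the induction you want to run may not have a base.
\item Even granting finiteness, you do not carry out the coincidence argument. You note that replacing each $A_i$ by a geodesic in its attaching piece gives a closed curve $\sigma'$ in $\limo X_n$ which need not be simple, correctly observe this blocks a direct appeal to $(T_2^\omega)$, and then only gesture at ``extracting a topological arc \dots that enters and exits through different complementary components.'' As stated this is not a proof; turning it into one requires a careful case analysis of how the replaced path interacts with the gate points of $P_1$, and you have not done that.
\end{itemize}

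The paper sidesteps both difficulties by arguing the contrapositive: assume $\alpha$ is \emph{not} contained in any single $\limo\tilde Y_n$, replace each excursion into the open arcs by a geodesic in the corresponding piece to get a closed curve $\gamma$ in $\limo X_n$ (no finiteness needed---one replaces on each open component of $\alpha^{-1}(U)$), and observe $\gamma$ is not contained in a single piece. Then one invokes \autoref{M-transitions}, which applies to \emph{any} closed curve, not just simple ones: it produces distinct transition points $t_1,t_2$ with $\gamma(t_1)=\gamma(t_2)$. Since transition points lie outside the replaced intervals, $\alpha(t_i)=\gamma(t_i)$ there, hence $\alpha$ is not simple. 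This is exactly the lemma whose content you allude to (``essentially the content of the proof of \autoref{M-transitions}''), but the paper uses it on $\gamma$ rather than trying to force $\gamma$ itself into a single piece; that is the missing idea in your proposal.
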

	
	\rem
	Note first that the diameter of $\tilde X_n/G_n$ is uniformly bounded.
	Indeed for every $n \in \N$, $\diam(\tilde X_n/G) \leq \diam (X_n/G) + \eta$.
	In particular the $\omega$-limit of $\tilde X_n$ does not depend on the choice of the base points $(e_n)$.
	More precisely, according to \autoref{res: tree-graded ultra-limit change of base point}, whatever the sequence of base points $(e_n)$ is, $(\tilde X_n,e_n)$ is sparsely tree-graded with respect to $(\tilde{\mathcal Y}_n)$.
	
	\begin{proof}
		We have the following equalities.
		\begin{equation}
			\limo \tilde X_n
			= \limo X_n \cup \limo \set{a_{x,y} }{ (x,y)\in T_n} = \limo X_n \cup\set {a_{ x,  y} }{ ( x,   y) \in Q}
		\end{equation}
		and for every $(Y_n) \in \Pi_{n \in \N} \mathcal Y_n$,
		\begin{equation}
			\limo \tilde Y_n
			= \limo Y_n \cup \limo  \set{ a_{x,y} }{ (x,y)\in T_n \text{ and } x,y \in Y_n^{+\delta_n}}
			= \limo Y_{n} \cup \set{ a_{ x,  y} }{ ( x,   y) \in W_{(Y_n)}}
		\end{equation}
		where
		\begin{eqnarray*}
			Q & = & \set{(\limo x_n, \limo y_n) }{x_n,y_n \in X_n \text{ and } (x_n, y_n) \in T_n \text{ \oas} }, \\
			W_{(Y_n)} & = & \set{ (\limo x_n, \limo y_n) }{ x_n, y_n\in  Y_n^{+\delta_n} \text { and } \dist{x_n}{y_n} \leq \eta \text{ \oas}},
		\end{eqnarray*}
		and $a_{ x, y}$ is a metric arc of length $\dist xy$ attached to $\limo X_n$ in the same way we did for $X_n$.
		
		\paragraph{}We now look at Axiom $(T_1^\omega)$.
		Let $(\tilde Y_n)$ and  $(\tilde Y'_n)$ be two sequences of $\Pi_{n \in \N}\tilde{\mathcal Y}_n$ such that $\tilde Y_n \neq \tilde Y'_n$ \oas.
		It particular it implies that $Y_n \neq Y'_n$ \oas, where $Y_n$ and $Y'_n$ are the elements of $\mathcal Y_n$ from which $\tilde Y_n$ and  $\tilde Y'_n$ have been built.
		Consequently $\limo Y_n$ and $\limo Y'_n$ have at most one common point.
		Therefore it is sufficient to prove that  $\limo \tilde Y_n \cap \limo \tilde Y'_n$ is contained in $\limo Y_n \cap \limo Y'_n$.
		Let $z$ be a point of $\limo \tilde Y_n \cap \limo \tilde Y'_n$.
		Without loss of generality we can assume that $z$ does not belong to  $\limo Y_n\cap \limo Y'_n$.
		Consequently, for every $n \in \N$ there exist $x_n,y_n$ with $\dist{x_n}{y_n} \leq \eta$ which are in the $\delta_n$-neighborhood of both $Y_n$ and $Y'_n$ and a point $z_n$ on the arc $a_{x_n,y_n}$ such that $z$ is the limit $z = \limo z_n$.
		Since the sequence $(\delta_n)$ converges to $0$, $x=\limo x_n$ and $y=\limo y_n$ lie in the intersection of $\limo Y_n$ and $\limo Y'_n$, thus $x=y$.
		In other words $\limo \dist{x_n}{y_n} =0$.
		However by construction $z_n$ is on the arc between $x_n$ and $y_n$ whose length is $\dist{x_n}{y_n}$.
		At the limit we obtain $z = x = y$.
		Hence $z$ belongs to $\limo Y_n \cap \limo Y'_n$.
		Consequently Axiom $(T_1^\omega)$ holds.
		
		\paragraph{}		
		Only Axiom $(T_2^\omega)$ is left to prove.
		Let $\alpha : S^1 \to \limo \tilde X_n$ be a closed curve.
		Suppose that $\alpha$ is not contained in a single set of $\tilde{\mathcal Y} = \set{\limo \tilde Y_n}{ Y_n \in \mathcal Y_n}$.
		We denote by $U$ the set $\limo \tilde X_n \setminus \limo X_n$.
		In other words $U$ is the union for all $(x,y) \in Q$ of the open arcs $a_{x,y}\setminus\{x,y\}$.
		Notice that $U$ is open in $\limo \tilde X_n$.
		Let $\{J_k\}$ be the set  of disjoint open intervals of $\alpha^{-1}(U)$.
		Note that both endpoints of an interval $J_k$ must be mapped into the same element of $\mathcal Y $.
		Since the pieces of $\limo X_n$ are convex, we can then define a new closed loop $\gamma$ by replacing each subpath $\alpha(\bar J_k)$ by a geodesic contained in a single piece between its endpoints (here $\bar J_k$ stand for the closure of $J_k$).
		Then $\gamma$ is a closed curve in $\limo X_n$ which is not contained in a single piece of $\limo X_n$.
		By \autoref{M-transitions}, there exists distinct transition points $t_1,t_2\in S^1$ such that $\gamma(t_1) = \gamma(t_2)$.
		Notice that $\gamma(t) = \alpha(t)$ for all transition points.
		Thus $\alpha(t_1)= \alpha(t_2)$.
		In particular $\alpha$ is not a simple curve.
		This implies $(T_2^\omega)$ and completes the proof of the lemma.	
	\end{proof}

	\begin{lemm}
	\label{res: omega controlled distortion}
		The distortion of $(\tilde{\mathcal Y}_n)$ is $\omega$-controlled.
	\end{lemm}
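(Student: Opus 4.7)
The easy inequality $\limo \dist[\tilde X_n]{\tilde y_n}{\tilde y'_n} \le \limo \dist[\tilde Y_n]{\tilde y_n}{\tilde y'_n}$ is immediate, since the inclusion $(\tilde Y_n, \dist[\tilde Y_n]) \hookrightarrow (\tilde X_n, \dist[\tilde X_n])$ is $1$-Lipschitz. So the work is in the reverse direction. Set $d_\infty = \limo \dist[\tilde X_n]{\tilde y_n}{\tilde y'_n}$, $\tilde y = \limo \tilde y_n$, and $\tilde y' = \limo \tilde y'_n$. Since $\tilde X_n$ is a proper length space (its snet is locally finite and $X_n$ is proper), it is geodesic; pick a unit-speed geodesic $\gamma_n \colon [0, \ell_n] \to \tilde X_n$ from $\tilde y_n$ to $\tilde y'_n$ with $\ell_n = \dist[\tilde X_n]{\tilde y_n}{\tilde y'_n}$, and let $\gamma \colon [0, d_\infty] \to \limo \tilde X_n$ be its $\omega$-limit (reparametrized on $[0,d_\infty]$). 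Then $\gamma$ is a geodesic from $\tilde y$ to $\tilde y'$ of length $d_\infty$.

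By \autoref{res: tilde X tree-graded}, $\limo \tilde X_n$ is tree-graded with respect to the collection $\{\limo \tilde Y_n\}$. A standard consequence of axioms \ref{T1} and \ref{T2} (compare \autoref{M-transitions}) is that every geodesic of a tree-graded space joining two points of a single piece remains inside that piece; hence $\gamma$ is entirely contained in $\limo \tilde Y_n$. This captures the key reason the argument works: even though a geodesic of $\tilde X_n$ between $\tilde y_n$ and $\tilde y'_n$ may wander far from $Y_n$, in the limit it is forced into the piece.

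Now fix $\epsilon > 0$. Discretize $\gamma$ by choosing $\tilde y = z_0, z_1, \ldots, z_m = \tilde y'$ on its image with $\dist{z_i}{z_{i+1}} < \eta/10$ and $\sum_i \dist{z_i}{z_{i+1}} = d_\infty$. Each $z_i \in \limo \tilde Y_n = \limo Y_n \cup \{\text{limit arcs}\}$, and every limit arc has length at most $\eta$; hence by perturbing each $z_i$ at most $\eta$ along its arc and refining the partition so this overhead fits inside $\epsilon/2$, we may assume $z_i \in \limo Y_n$. Lift to $y_i^n \in Y_n$ with $\limo y_i^n = z_i$; for $\omega$-almost every $n$, $\dist[\tilde X_n]{y_i^n}{y_{i+1}^n} < \eta/3$. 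Choose snet points $s_i^n \in S_n$ with $\dist[\tilde X_n]{y_i^n}{s_i^n} \le \delta_n$; then $s_i^n \in S_n \cap Y_n^{+\delta_n} \subset \tilde Y_n$, and for $n$ large $\dist[\tilde X_n]{s_i^n}{s_{i+1}^n} < \eta$, so the arc $a_{s_i^n, s_{i+1}^n}$ exists in $\tilde Y_n$ with length $\dist[\tilde X_n]{s_i^n}{s_{i+1}^n}$. The locally undistorted hypothesis (applicable once $\delta_n < \delta$) provides a path in $Y_n \subset \tilde Y_n$ from $y_i^n$ to $s_i^n$ of length essentially $\delta_n$, and the endpoints $\tilde y_n, \tilde y'_n$ are handled by similar short connectors (bounded by $\eta/2 + \delta_n$ within $\tilde Y_n$). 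Concatenation yields a path in $\tilde Y_n$ from $\tilde y_n$ to $\tilde y'_n$ of total length at most $d_\infty + \epsilon$ for $\omega$-almost every $n$, and letting $\epsilon \to 0$ concludes.

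The main difficulty is the bookkeeping in the last step: the discretization scale must be small enough relative to $\eta$ so that consecutive snet points $s_i^n, s_{i+1}^n$ always lie within ambient distance $\eta$, ensuring the genuine arcs of $\tilde Y_n$ are available; and the short $y_i^n \to s_i^n$ connectors must have negligible contribution in the length metric of $\tilde Y_n$. These are delivered respectively by $\delta_n \to 0$ and by the locally undistorted assumption, which is precisely what prevents the original collection $\mathcal Y$ from being $\omega$-controlled on its own.
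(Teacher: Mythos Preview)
Your proof is correct and follows the same strategy as the paper's: exploit that the limit geodesic $\gamma$ lies in the piece $\limo \tilde Y_n$, discretize at scale $<\eta$, lift the partition points to snet points in $S_n\cap Y_n^{+\delta_n}$, and connect consecutive snet points by the built-in arcs of $\tilde Y_n$. The endpoint connectors and the appeal to local undistortion also match.

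One cosmetic difference worth noting: rather than perturbing the discretization points off the limit arcs into $\limo Y_n$ and then ``refining'', the paper simply extends $S_n$ to a $\delta_n$-snet $\tilde S_n$ of $\tilde Y_n$ by adding points on the attached arcs. This lets the lifts $z_{n,j}$ be chosen directly in $\tilde Y_n\cap\tilde S_n$ (no perturbation needed), and any two such points at distance $\le\eta$ are joined by a geodesic inside $\tilde Y_n$ by construction. Your perturbation step is fine in spirit, but as written it is a bit loose: the phrase ``a path in $Y_n\subset\tilde Y_n$ from $y_i^n$ to $s_i^n$'' is inaccurate since $s_i^n$ generally lies in $Y_n^{+\delta_n}\setminus Y_n$, and ``refining the partition so this overhead fits inside $\epsilon/2$'' does not literally work (more points means more perturbations). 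What actually saves you is that arc endpoints already lie in $\limo Y_n$ and the arc length equals the ambient distance between them, so replacing arc-interior partition points by arc endpoints does not change the sum at all; the extended-snet device in the paper sidesteps this bookkeeping entirely.
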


	\begin{proof}
		Let $n \in \N$.
		Let $\tilde Y \in \tilde {\mathcal Y}_n$.
		Note that by construction $\tilde Y\cap S_n$ is a $\delta_n$-snet of $\tilde Y\cap X_n$.
		By adjoining points on the added arcs, we can extend $S_n$ into a subset $\tilde S_n$ of $\tilde X_n$ such that $\tilde Y\cap \tilde S_n$ is a $\delta_n$-snet of $\tilde Y$.
		Notice that any two points in $\tilde Y \cap \tilde S_n$ which are at most $\eta$ apart are still connected by a geodesic contained in $\tilde Y$.
		This extension can be done in such a way that every $\tilde Y \in \tilde{\mathcal Y}_n$ satisfies the same property.
		
		\paragraph{}
		Let $(\tilde Y_n)$ be a sequence of $\Pi_{n \in \N}\tilde{\mathcal Y}_n$.
		Let $(y_n)$ and $(y'_n)$ be two elements of $\Pi_{n\in \N}\tilde Y_n$.
		Our goal is to compare $\limo \dist[\tilde Y_n]{y_n}{y'_n}$ and $\limo \dist[\tilde X_n]{y_n}{y'_n}$.
		By definition of the length metric we have
		\begin{equation*}
			 \limo \dist[\tilde Y_n]{y_n}{ y'_n} \geq \limo \dist[\tilde X_n]{y_n}{ y'_n}.
		\end{equation*}
		Hence it is enough to show that if $\limo \dist[\tilde X_n]{y_n}{y'_n}$ is finite, then the reverse inequality holds.
		As we explained before, the choice of the base points does not affect the $\omega$-limit of $\tilde X_n$.
		Therefore we will work in $\limo (\tilde X_n,y_n)$ and put $y = \limo y_n$.
		Since $\limo \dist[\tilde X_n]{y_n}{y'_n}$ is finite, the sequence $(y'_n)$ defines a point $y' = \limo y'_n$ in $\limo (\tilde X_n,y_n)$.
		Fix a geodesic $\gamma: \intval ab \rightarrow \limo\tilde X_n$ from $ x$ to $ y$.
		Since $(\tilde X_n)$ is sparsely tree-graded with respect to $(\tilde {\mathcal Y}_n)$, $\gamma$ is contained in the piece $\limo\tilde Y_n$.
		Fix a partition $a=t_0<t_1<\dots< t_k =b$ such that for every $j \in \intvald 0{k-1}$,
		\begin{equation*}
			\dist{\gamma(t_j)}{ \gamma(t_{j+1})} < \eta/2.
		\end{equation*}

		\paragraph{}Recall that for every $n \in \N$, $\tilde Y_n \cap \tilde S_n$ is a $\delta_n$-snet of $\tilde Y_n$,
		Therefore for every $j \in \intvald 1{k-1}$, there exists a sequence $(z_{n,j}) \in \Pi_{n \in \N} \tilde Y_n \cap \tilde S_n$ such that $\gamma(t_j) = \limo z_{n,j}$.
		In addition for every $n \in \N$ there exists $z_{n,0}, z_{n,k} \in  \tilde Y_n \cap \tilde S_n$ such that $\dist[\tilde X_n]{y_n}{z_{n,0}} \leq \delta_n$ and $\dist[\tilde X_n]{y'_n}{z_{n,k}} \leq \delta_n$.
		In particular $\limo z_{n,0} = y$ and $\limo z_{n,k} = y'$.
		There exists a subset $A$ of $\N$ with $\omega(A)=1$ such that for every $n \in A$, for every $j \in \intvald 0{k-1}$,
		\begin{equation*}
			\dist[\tilde X_n]{z_{n,j}}{ z_{n,j+1}} <\eta.
		\end{equation*}
		By construction of $\tilde S_n$ it implies that
		\begin{equation*}
			\dist[\tilde X_n]{z_{n,j}}{ z_{n,j+1}}= \dist[\tilde Y_n]{z_{n,j}}{z_{n,j+1}}.
		\end{equation*}
 		Recall that every $Y \in \mathcal Y$ is locally undistorted in $X$.
		By choice of $(\delta_n)$ it follows that for every $n \in \N$,
		\begin{equation*}
			\dist[\tilde X_n]{y_n}{z_{n,0}}= \dist[\tilde Y_n]{y_n}{z_{n,0}} \quad \text{and} \quad \dist[\tilde X_n]{y'_n}{z_{n,k}}= \dist[\tilde Y_n]{y'_n}{z_{n,k}}.
		\end{equation*}
		Consequently, for every $n \in A$ we have the following inequality.
		\begin{eqnarray*}
			\dist[\tilde Y_{n}]{y_n}{y'_n}
			& \leq & \dist[\tilde Y_n]{y_n}{z_{n,0}}  + \sum_{j=0}^{k-1} \dist[\tilde Y_n]{z_{n,j}}{z_{n,j+1}} + \dist[\tilde Y_n]{z_{n,k}}{y'_n} \\
			& = & \dist[\tilde X_n]{y_n}{z_{n,0}}  + \sum_{j=0}^{k-1} \dist[\tilde X_n]{z_{n,j}}{z_{n,j+1}} + \dist[\tilde X_n]{z_{n,k}}{y'_n} .
		\end{eqnarray*}
		Thus after taking the $\omega$-limit we get
		\begin{equation*}
			\limo \dist [\tilde Y_n]{y_n}{y'_n}
			\leq \sum_{j=0}^{k-1} \limo \dist[\tilde Y_n]{z_{n,j}}{z_{n,j+1}}
			= \sum_{j=0}^{k-1} \dist{\gamma(t_j)}{ \gamma(t_{j+1})}
			= \dist y{y'}
			= \limo \dist [\tilde X_n]{y_n}{y'_n}.
		\end{equation*}
		Thus the distortion of $(\tilde{\mathcal Y}_n)$ is $\omega$-controlled.
	\end{proof}

	\begin{lemm}
	\label{res: same stabilizers}
		There exists a subset $A$ of $\N$ with $\omega(A)=1$ such for every $n \in A$, for every $Y \in \mathcal Y_n$, $\stab Y = \stab{\tilde Y}$.
	\end{lemm}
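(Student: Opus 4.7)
My plan is to prove the two inclusions $\stab Y \subseteq \stab{\tilde Y}$ and $\stab{\tilde Y} \subseteq \stab Y$ separately. The first inclusion will hold unconditionally for every $n$ and every $Y \in \mathcal Y_n$. I will verify that the $G$-action on $X_n$ extends to an isometric action on $\tilde X_n$: the set $S_n$ is $G$-invariant by construction, so $T_n$ is $G$-invariant, and each arc $a_{x,y}$ is sent isometrically onto $a_{gx,gy}$. Consequently, if $gY=Y$ then $g$ preserves $Y^{+\delta_n}$, hence the additional $S_n$-points and arcs attached to $Y$, so $g\tilde Y=\tilde Y$.

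The reverse inclusion will be proved by contradiction. Let $B$ be the set of $n\in\N$ for which some $Y_n\in\mathcal Y_n$ admits $g_n\in \stab{\tilde Y_n}\setminus\stab{Y_n}$, and assume $\omega(B)=1$ (the only alternative to $\omega(B)=0$ since $\omega$ is $\{0,1\}$-valued). For each $n\in B$ I select such $Y_n, g_n$; for $n\notin B$ I choose them arbitrarily. The first key observation is that $g_nY_n\in\mathcal Y_n$ by $G$-invariance, and
\[
g_nY_n \;\subseteq\; g_n\tilde Y_n\cap X_n \;=\; \tilde Y_n\cap X_n \;\subseteq\; Y_n^{+\delta_n},
\]
so the distinct element $g_nY_n$ is forced one-sidedly into a $\delta_n$-neighborhood of $Y_n$.

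The second key step will exploit that $Y_n$, and hence $g_nY_n$, is unbounded and rectifiably path connected. Starting from an arbitrary $z_n^1\in g_nY_n$ and travelling along a rectifiable path in $g_nY_n$ to a far-away point, the continuity of $\dist[X_n]{z_n^1}{\cdot}$ along the path yields, by the intermediate value theorem, a point $z_n^2\in g_nY_n$ with $\dist[X_n]{z_n^1}{z_n^2}=1$. Using $g_nY_n\subseteq Y_n^{+\delta_n}$ I pick $y_n^1, y_n^2\in Y_n$ with $\dist[X_n]{y_n^i}{z_n^i}\leq\delta_n$. Rebasing at $(y_n^1)$, which preserves the sparsely tree-graded property by \autoref{res: tree-graded ultra-limit change of base point}, both $\limo Y_n$ and $\limo g_nY_n$ will contain the origin $\limo y_n^1=\limo z_n^1$ and the point $\limo y_n^2=\limo z_n^2$ at distance $1$, since $\delta_n\to 0$. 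This gives two common points between $\limo Y_n$ and $\limo g_nY_n$, while axiom $(T_1^\omega)$ applied to $(Y_n)$ and $(g_nY_n)$ (which differ on the $\omega$-full set $B$) forbids more than one. The contradiction will force $\omega(B)=0$, and I take $A=\N\setminus B$.

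The main obstacle I anticipate is the bridge between the algebraic hypothesis ``$g_n$ stabilizes $\tilde Y_n$ but not $Y_n$'' and an asymptotic collision between two pieces that can trigger $(T_1^\omega)$. The key is to notice that $gY_n$ is not just an arbitrary subset of $\tilde Y_n$ but another full element of $\mathcal Y_n$, so one can export it back to $X_n$ and confine it to $Y_n^{+\delta_n}$; the remaining work is routine manipulation of ultra-limits and a change of basepoint.
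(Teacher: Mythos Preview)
Your proposal is correct and follows essentially the same argument as the paper: both proofs observe $\stab Y\subseteq\stab{\tilde Y}$ from $G$-invariance of the construction, then derive a contradiction to $(T_1^\omega)$ by producing two distinct points in $\limo Y_n\cap\limo g_nY_n$ from the containment $g_nY_n\subseteq \tilde Y_n\cap X_n\subseteq Y_n^{+\delta_n}$ and $\delta_n\to 0$. The only cosmetic differences are that the paper picks the two points at distance $1$ in $Y_n$ and then pushes them by $g_n$, whereas you pick them in $g_nY_n$ and approximate in $Y_n$; and you make the rebasing via \autoref{res: tree-graded ultra-limit change of base point} explicit, which the paper leaves implicit.
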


	\begin{proof}
		Assume that our assertion is false.
		There exists a sequence $(Y_n) \in \Pi_{n \in \N}\mathcal Y_n$ such that $\stab {Y_n} \neq \stab {\tilde Y_n}$ \oas.
		Let $n \in \N$.
		Recall that $\tilde Y_n$ is obtained from $Y_n$ by adding arcs between two points $x,y \in S_n$ in the $\delta_n$-neighborhood $Y_n$ which are at most $\eta$ apart.
		Since $S_n$ is $G$-invariant $\stab{Y_n}$ is contained in $\stab{\tilde Y_n}$.
		Consequently for every $n \in \N$ there exists $g_n \in \stab{\tilde Y_n}$ such that $g_n$ does not belong to $\stab {Y_n}$ \oas.
		We assumed that the elements in the collection $\mathcal Y$ of subsets of $X$ were rectifiably connected and  infinite.
		Consequently for every $n \in \N$ we can find two points $x_n, x'_n \in Y_n$ such that the distance $\dist {x_n}{x'_n}$ (measured in $X_n$) is $1$.
		Let $n \in \N$.
		Since $g_n$ stabilizes $\tilde Y_n$, $g_nx_n$ and $g_nx'_n$ both belongs to $X_n \cap \tilde Y_n$.
		Thus they are in the $\delta_n$-neighborhood of $Y_n$.
		Moreover they belong to $g_nY_n$.
		By construction, the sequence $(\delta_n)$ converges to zero.
		Consequently $\lim g_nx_n$ and $\limo g_nx'_n$ are two distinct points of $\lim X_n$ lying in $\limo Y_n \cap \limo g_nY_n$.
		However $(X_n)$ is sparsely tree-graded with respect to $(\mathcal Y_n)$.
		It follows from Axiom $(T_1^\omega)$ that $g_n$ belongs to $\stab {Y_n}$ \oas.
		Contradiction.
	\end{proof}

	We can now complete the proof of \autoref{res: main theorem geometric form}.
	By construction for every $n \in \N$, $G$ acts properly co-compactly on $\tilde X_n$ and $(\tilde {\mathcal Y}_n)$ is $G$-invariant collection of closed unbounded locally undistorted subsets of $\tilde X_n$.
	Moreover the diameter of $\tilde X_n/G$ is uniformly bounded above by $\eta$.
	I follows from \autoref{res: tilde X simply connected}, \autoref{res: tilde X tree-graded} and \autoref{res: omega controlled distortion} that $(\tilde X_n)$ and $(\tilde{\mathcal Y}_n)$ satisfies the assumption of \autoref{res: main theorem weak geometric form}.
	Consequently there exists a subset $A$ of $\N$ with $\omega(A)=1$ such that for every $n \in A$, $G$ is hyperbolic relative to $\set{\stab {\tilde Y}}{\tilde Y \in \tilde{\mathcal Y}_n/G}$ where $\tilde{\mathcal Y}_n/G$ is identified with a set of representatives of the $G$-orbits of the elements of $\tilde{\mathcal Y}_n$.
	According to \autoref{res: same stabilizers} the stabilizers of the element of $\tilde{\mathcal Y}_n$ are the same as the ones of the elements of $\mathcal Y_n$ \oas.
	By construction $\mathcal Y_n$ is just the collection $\mathcal Y$ viewed in the rescaled space $X_n$.
	Therefore $G$ is hyperbolic relative to $\set{\stab  Y}{Y \in \mathcal Y/G}$ where $\mathcal Y/G$ is identified with a set of representatives of the $G$-orbits of the elements of $\mathcal Y$.	
\end{proof} 


\section{Application and comments}
\label{sec: application and comments}

In this section we prove the theorem given in the introduction.
We discuss also some questions naturally arising from this work.
Given a finite set $S$, recall that $\mathbf F(S)$ stands for the free group generated by $S$.

\begin{defi}
\label{def: relative finite presentation}
	Let $G$ be a group and $\{H_1,\dots, H_m\}$ a collection of subgroups of $G$.
	We say that $G$ is \emph{finitely presented relative to $\{H_1,\dots, H_m\}$} if there exist a finite set $S$ and a finite subset $R$ of the free product $\mathbf F(S)*H_1*\dots*H_m$ such that $G$ is isomorphic to the quotient of $\mathbf F(S)*H_1*\dots*H_m$ by the normal subgroup generated by $R$.
\end{defi}

\begin{theo}
\label{res: main theorem}
	Let $G$ be a finitely generated group and $\{H_1,\dots, H_m\}$ be a collection of subgroups of $G$.
	Assume that $G$ is finitely presented relative to $\{H_1,\dots, H_m\}$ and sparsely asymptotically tree-graded with respect to $\{H_1,\dots, H_m\}$.
	Then $G$ is hyperbolic relative to $\{H_1,\dots, H_m\}$.
\end{theo}

\begin{proof}
	Since $G$ is finitely presented relative to $\{H_1,\dots, H_m\}$, each $H_i$ is finitely generated.
	For every $i \in \intvald 1m$ we fix a presentation $\langle S_i | R_i\rangle$ of $H_i$ where $S_i$ is finite (note that $R_i$ might be infinite).
	We denote by $Y_i$ the Cayley graph associated to this presentation.
	The group $G$ being relatively finitely presented, it admits a presentation $\langle S\cup S_1\cup \dots \cup S_m |R \cup R_1 \cup \dots \cup R_m\rangle$ where $S$ and $R$ are finite.
	The space $X$ stands for the Cayley graph associated to this presentation.
	This space is quasi-isometric to $G$.
	Note that every $Y_i$ naturally embeds in $X$ and the stabilizer of $Y_i$ is $H_i$.
	Since the generating set of the presentation of $G$ is finite, $X$ is a proper geodesic space.
	Moreover each $Y_i$ is locally undistorted.
	We denote by $\mathcal Y$ the set of all translates of $Y_i$, i.e.
	\begin{equation*}
		\mathcal Y =  \bigcup_{i =1}^m \set{gY_i}{g \in G/H_i},
	\end{equation*}
	It follows from our asumptions that $X$ is sparsely asymptotically tree-graded with respect to $\mathcal Y$.
	By construction, any loop in $X$ is homotopic to a product of loops $\gamma_1 \cdots \gamma_\ell$ such that for every $j \in \intval 1\ell$ $\gamma_j$ is freely homotopic to a loop contained in one element of $\mathcal Y$ or representing an element of $R$.
	However $R$ is finite.
	Thus $X$ is $\epsilon$-simply-connected relative to $\mathcal Y$ for some $\epsilon >0$.
	Therefore by \autoref{res: main theorem geometric form}, $G$ is hyperbolic relative to $\{H_1, \dots, H_m\}$.
\end{proof}

\paragraph{Relatively finite presentation.}
The assumptions of \autoref{res: main theorem} cannot be weakened.
In particular it is really essential to assume that $G$ is finitely presented relative to $\{H_1, \dots, H_m\}$.
Consider for instance a lacunary hyperbolic group.
A finitely generated group is lacunary hyperbolic if one of its asymptotic cones is a tree.
A.~Ol’shanski\u\i, D.~Osin, and M.~Sapir showed the existence of lacunary hyperbolic groups which are not hyperbolic \cite{OlcOsiSap09}.
Recall that a group is hyperbolic relative to the trivial subgroup if and only if it is hyperbolic.
If $G$ is lacunary hyperbolic, then $G$ has an asymptotic cone which is a tree.
Hence $G$ is sparsely asymptotically tree-graded with respect to the trivial subgroup.
Hence the existence of lacunary hyperbolic groups which are not hyperbolic shows that it is still not enough to be sparsely asymptotically tree-graded with respect to a finitely generated subgroup.

\paragraph{}
Let us sketch now the construction of another example.
We build a group $G$ which is sparsely asymptotically tree-graded with respect to an infinite subgroup $H$.
However the asymptotic cone involved in this example is not a tree.
To that end we use the small cancellation theory.
We refer the reader to the book of R.~Lyndon and P.~Schupp for an exposition of the small cancellation theory \cite[Chapter V]{LynSch77}.
Fix an alphabet $\mathcal A$ and let $\{w_n\}$ be a set of cyclically reduced words satisfying the $C'(\lambda)$ assumption for $\lambda \ll 1$ and $\lim_{n\rightarrow \infty} |w_n|/|w_{n+1}|=0$.
Here $|w_n|$ stands for the length the word $w_n$.
 For example, we can take the alphabet $\mathcal A=\{a,b\}$ and the set $\{w_n\}_{n \geq 1000}$ where
 \begin{equation*}
 	w_n = \left(a^{2^{2^n}}b^{2^{2^n}}\right)^n.
 \end{equation*}
Given a subset $I$ of $\N$ we write $\mathcal R_I $ for the set $\{w_n^n \mid n\in I\}$.
We define the group $G_I$ by the presentation $G_I =  \pres{S}{\mathcal R_I}$ whereas $H_I$ is the subgroup of $G_I$ generated by $\{w_n\mid n\in I\}$.



\begin{prop}
\label{example1}
There exists an infinite subset $I$ of $\N$ such that $G_I$ is sparcely asymptotically tree-graded with respect to the set of distinct left cosets of $H_I$.
\end{prop}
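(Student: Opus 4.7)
The plan is to construct $I = \{n_k\}_{k \geq 1}$ inductively so that $(|w_{n_k}|)$ grows extremely fast, and then to exhibit a rescaling sequence $(d_k)$ and a non-principal ultra-filter $\omega$ for which $\ascong{G_I}d$ is tree-graded with pieces exactly of the form $\limo g_k H_I$. First I would invoke the classical small cancellation analysis of $\pres{S}{\mathcal R_I}$: since $\{w_n\}$ satisfies $C'(\lambda)$ with $\lambda \ll 1$, so does $\{w_n^n : n \in I\}$, and Greendlinger's lemma guarantees that every reduced van Kampen diagram decomposes into relator cells $C_n$ whose boundary label $w_n^n$ traces the cycle $g, gw_n, \dots, gw_n^{n-1}, g$ lying entirely in a single left coset of $\langle w_n \rangle$, hence in a single left coset of $H_I$. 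This combinatorial observation -- every relator cell lives in a single $H_I$-coset -- is the backbone of the tree-graded structure.

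Second, given $n_1 < \dots < n_k$, I would pick $n_{k+1}$ with $|w_{n_{k+1}}| \geq 2^k\, n_k\, |w_{n_k}|$ (only the fast-enough growth matters), then set $d_k = n_k |w_{n_k}|$, the perimeter of the $n_k$-th relator cell. At this scale the relator $w_{n_k}^{n_k}$ rescales to a loop of unit length; relators $w_{n_l}^{n_l}$ with $l < k$ have rescaled diameter tending to $0$, so they collapse to points in the cone; relators with $l > k$ have rescaled diameter tending to $+\infty$, so they contribute no visible loops. Accordingly, each limit set $\limo g_k H_I$ is governed only by the scale-$k$ cycle, and two such limits either coincide or meet in at most one point, thanks to the $C'(\lambda)$-bound on overlaps between distinct relators.

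Finally, I would verify the axioms $(T_1^\omega)$ and $(T_2^\omega)$ for the family of pieces $\{\limo g_k H_I\}$. For $(T_1^\omega)$, if two coset sequences with $g_k H_I \neq g'_k H_I$ $\omega$-almost surely produced limit sets sharing two distinct points, then $\omega$-almost surely $g_k H_I$ and $g'_k H_I$ would share an overlap of diameter comparable to $d_k$, which by small cancellation forces $g_k^{-1} g'_k \in H_I$ for those $k$, a contradiction. For $(T_2^\omega)$, any simple closed curve in the cone lifts to a sequence of nearly-closed loops in the Cayley graph, filled by reduced van Kampen diagrams; combining the cell analysis, the collapsing of all small cells in the cone, and the simplicity of the limit curve, one shows that only one scale-$k$ relator cell effectively contributes, so the curve lies in a single piece. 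The main obstacle is making this last step rigorous: one has to prevent cross-scale interference so that the cell-by-cell picture survives the ultra-limit, which requires a careful Greendlinger-type estimate together with the lacunary growth of $I$, very much in the spirit of the diagonal arguments of \cite{OlcOsiSap09}.
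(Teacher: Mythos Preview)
Your overall architecture is sound and parallels the paper's own argument: pick a sparse $I$ so that at each rescaling only one relator length is visible, observe that every relator cell sits in a single $H_I$-coset, and then verify $(T_1^\omega)$ and $(T_2^\omega)$. The paper, however, does not carry out $(T_2^\omega)$ by hand. It invokes the graded small cancellation machinery of \cite{OlcOsiSap09}: after checking that $\mathcal R_I$ satisfies the conditions $(SC_1),(SC_2)$ of \cite[Lemma~4.7]{OlcOsiSap09}, it applies \cite[Corollary~4.15]{OlcOsiSap09} to extract an infinite $J\subset I$ satisfying the graded condition, and then \cite[Theorem~4.17]{OlcOsiSap09} delivers a rescaling sequence for which the cone is a \emph{circle-tree}, tree-graded with respect to limits of relator loops. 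Since each relator loop lies in a coset of $H_J$, $(T_2^\omega)$ follows immediately. The only thing left to check by hand is $(T_1^\omega)$, for which the paper proves a quantitative lemma: for $g\notin H_I$ and $a_1,a_2\in H_I$, $b_1,b_2\in gH_I$, one has $\dist{a_1}{a_2}\leq 1000\lambda(\dist{a_1}{b_1}+\dist{a_2}{b_2})$.

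Two points deserve attention in your proposal. First, your $(T_2^\omega)$ sketch---lift the simple closed curve to approximate loops, fill by van Kampen diagrams, argue that ``only one scale-$k$ relator cell effectively contributes''---is precisely the content of \cite[Theorem~4.17]{OlcOsiSap09}, and your description does not yet contain the mechanism that makes it work. A priori the diagram can have many scale-$k$ cells, and the step from ``many cells'' to ``the limit curve is confined to one piece'' is exactly the circle-tree theorem; you have named the obstacle correctly but not supplied the idea that overcomes it. If you want a self-contained proof, you will need the estimates of \cite[\S4]{OlcOsiSap09} on how cells at the visible scale interact along the boundary of a reduced diagram. Second, your $(T_1^\omega)$ argument (``an overlap of diameter comparable to $d_k$ \dots by small cancellation forces $g_k^{-1}g'_k\in H_I$'') is not quite the right formulation: two distinct $H_I$-cosets are not pieces of a small cancellation presentation, so ``overlap of relators'' does not literally apply. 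What one actually proves is the metric inequality above, which says that any two points of $H_I$ are $O(\lambda)$-close relative to their distances to a different coset $gH_I$; passing to the $\omega$-limit this forces $\limo H_I\cap\limo g_nH_I$ to be a single point whenever $g_n\notin H_I$ $\omega$-almost surely.

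In short: your plan is correct in outline, but the paper buys the hard step $(T_2^\omega)$ wholesale from \cite{OlcOsiSap09}, whereas you are proposing to reprove that theorem. If you pursue your route, the van Kampen argument for $(T_2^\omega)$ needs to be made substantially more precise, and the $(T_1^\omega)$ step should be recast as the explicit distance inequality between distinct cosets rather than as a ``small cancellation overlap'' statement.
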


To prove the proposition we will need the following two lemmas which are consequences of small cancelation theory.
The proof is left to the reader.

\begin{lemm}
\label{res: satisfying C'}
	For any $I\subset \N$, $\mathcal R_{I}$ is $C'(\mu)$ where $\mu \leq 4\min\{\lambda, 1/\min\{I\}\}$.
 \end{lemm}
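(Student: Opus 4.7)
The plan is to bound the length of any piece of $\mathcal R_I$ directly, exploiting that each relator $r_n = w_n^n$ is purely periodic with period $|w_n|$ and invoking the hypothesis that $\{w_n\}$ itself is $C'(\lambda)$. A piece is by definition a common prefix of two distinct elements of the symmetrization of $\mathcal R_I$, i.e.\ of two cyclic permutations of some $w_n^{\pm n}$ and $w_m^{\pm m}$ with $n,m\in I$, so I will split on whether the two relators involved come from the same index or from different ones.

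First, suppose $p$ is a common prefix of two distinct cyclic permutations of $w_n^{\pm n}$ (possibly one being a shift of the inverse). Since $w_n^{\pm n}$ has period $|w_n|$, comparing the two shifts on any window of length at least $|w_n|$ forces the first $|w_n|$ characters of $p$ to be simultaneously a cyclic shift of $w_n$ and a distinct cyclic shift (or cyclic shift of the inverse) of $w_n$; these are two distinct elements of the symmetrization of $\{w_n\}$, so $C'(\lambda)$ applied to $\{w_n\}$ yields $|p| < \lambda|w_n|$. Consequently $|p|/|r_n| < \lambda/n \leq \lambda/\min I$.

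Next, suppose $p$ is a common prefix of (a cyclic shift of) $w_n^{\pm n}$ and (a cyclic shift of) $w_m^{\pm m}$ with $n\neq m$, and assume without loss of generality $|w_n|\leq |w_m|$. If $|p|\geq |w_n|$, then the first $|w_n|$ characters of $p$ read inside $w_n^{\pm n}$ form a cyclic shift of $w_n^{\pm 1}$, and read inside $w_m^{\pm m}$ form a prefix of a cyclic shift of $w_m^{\pm 1}$ (using $|w_n|\leq |w_m|$ so that no wrap-around occurs). Thus two distinct elements of the symmetrization of $\{w_n\}$ share a common prefix of length $|w_n|$, which contradicts $C'(\lambda)$ since $\lambda < 1$. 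Hence $|p| < |w_n|$, which gives $|p|/|r_n| < 1/n \leq 1/\min I$ and $|p|/|r_m| < |w_n|/(m|w_m|)\leq 1/m\leq 1/\min I$.

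Combining the two cases, for every relator $r\in\mathcal R_I$ containing the piece $p$ we have $|p|/|r| < \min(\lambda, 1/\min I)$, which is comfortably below the advertised threshold $\mu = 4\min(\lambda,1/\min I)$. The only mildly delicate point, and really the only place to be careful, is the bookkeeping of cyclic shifts and inverses in the symmetrization: one must verify that the length-$|w_n|$ window in the second case is genuinely two \emph{distinct} elements of the symmetrization of $\{w_n\}$ before invoking $C'(\lambda)$, which follows from $n\neq m$ (or from the opposite orientation in the one-index case).
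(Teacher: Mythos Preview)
The paper leaves this lemma to the reader, so there is no proof to compare against; your approach is the natural one and is essentially correct. There is, however, one genuine slip in the bookkeeping of your Case~2 versus your final claim.

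In Case~2 you stop at $|p| < |w_n|$ and deduce $|p|/|r_n| < 1/n$ and $|p|/|r_m| < 1/m$, hence $|p|/|r| < 1/\min I$. But in your concluding sentence you assert $|p|/|r| < \min(\lambda, 1/\min I)$, and the bound $1/\min I$ from Case~2 does \emph{not} imply this when $\lambda < 1/\min I$; in that regime you have not established $|p|/|r| < 4\lambda$, which is half of the advertised conclusion $\mu \leq 4\min\{\lambda, 1/\min I\}$. The fix is immediate and uses exactly the idea you already deployed in Case~1: once $|p| < |w_n| \leq |w_m|$, the word $p$ is a common prefix of a cyclic shift of $w_n^{\pm 1}$ and of a cyclic shift of $w_m^{\pm 1}$ (no wrap-around in either), and these are distinct elements of the symmetrization of $\{w_k\}$, so $C'(\lambda)$ gives $|p| < \lambda |w_n|$. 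This yields $|p|/|r| < \lambda/\min I$ in Case~2 as well, and hence the uniform bound $\mu = \lambda/\min I \leq \min\{\lambda, 1/\min I\}$, comfortably below the stated threshold.

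A minor stylistic point: in Case~1 your sentence ``comparing the two shifts on any window of length at least $|w_n|$ \ldots\ yields $|p| < \lambda|w_n|$'' conflates two steps (first ruling out $|p|\geq |w_n|$ by contradiction, then applying $C'(\lambda)$ to $p$ itself when $|p|<|w_n|$). It would read more cleanly split in two, but the mathematics is fine.
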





\begin{lemm}
\label{unique limits}
	Let $I$ be a subset of $\N$.
	Let $g \in G_I\setminus H_I$.
	If $a_1,a_2\in H_I$ and $b_1,b_2\in gH_I$, then
	\begin{equation*}
		\dist[G_I]{a_1}{a_2}\leq 1000\lambda\left(\dist[G_I]{a_1}{b_1}+\dist[G_I]{a_2}{b_2}\right)
	\end{equation*}
\end{lemm}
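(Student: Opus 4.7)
The plan is to fill a geodesic quadrilateral with corners $a_1, b_1, b_2, a_2$ by a minimal van Kampen diagram over $\pres{S}{\mathcal R_I}$ and to exploit the small cancellation condition provided by Lemma~\ref{res: satisfying C'}. First, I would fix geodesics in the Cayley graph of $G_I$: a geodesic $\pi_a$ from $a_1$ to $a_2$ of length $L = \dist[G_I]{a_1}{a_2}$, geodesics $\alpha_i$ from $a_i$ to $b_i$ of length $D_i = \dist[G_I]{a_i}{b_i}$, and a geodesic $\pi_b$ from $b_1$ to $b_2$. Their concatenation $\pi_a \cdot \alpha_2 \cdot \pi_b^{-1} \cdot \alpha_1^{-1}$ is a closed loop in $G_I$, hence bounds a reduced van Kampen diagram $\Delta$ over the presentation. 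By Lemma~\ref{res: satisfying C'}, $\mathcal R_I$ satisfies $C'(\mu)$ with $\mu \leq 4\min\{\lambda, 1/\min I\}$, which for $\lambda$ and $1/\min I$ small enough is far below $1/6$, so all Greendlinger-type tools are available.

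If $\Delta$ contains any $2$-cell, it contains an exposed cell $C$ labeled by some $w_n^n$ whose boundary shares an arc $\eta$ with $\partial \Delta$ of length at least $(1-3\mu)\vert\partial C\vert$. I would then analyze the position of $\eta$ relative to the four sides. \textbf{Single-side case.} The arc $\eta$ cannot lie entirely within one geodesic side: otherwise a subword of length $>(1-3\mu)\vert\partial C\vert$ of $w_n^n$ would appear inside a $G_I$-geodesic, and replacing it by its strictly shorter complementary subword in the relator would contradict geodesity. \textbf{Bridging case.} The arc $\eta$ cannot simultaneously touch both $\pi_a$ and $\pi_b$: such a bridging cell would yield in $G_I$ an equality of the form $h_1 = u \cdot (ghg^{-1}) \cdot v$ with $h, h_1 \in H_I$, and $u, v$ traversing subpaths of $\alpha_1 \cup \alpha_2$ (so $\vert u\vert + \vert v\vert \leq D_1 + D_2$). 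Using that the relators $w_n^n$ are long proper powers of the generators $w_n$ of $H_I$ together with the $C'(\mu)$ structure, one deduces that such a relation forces $g \in H_I$, contradicting the hypothesis. \textbf{Corner case.} Hence $\eta$ must cross exactly one corner of the quadrilateral, engulfing essentially the entirety of one short side $\alpha_i$ together with small additional pieces $\ell_a$ on $\pi_a$ and $\ell_b$ on $\pi_b$ satisfying $\ell_a + \ell_b + D_i \geq (1-3\mu)\vert\partial C\vert$.

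Finally I would iterate: remove $C$ from $\Delta$ and replace the long arc $\eta \subset \partial \Delta$ by its short complement (of length $\leq 3\mu\vert\partial C\vert$), then repeat on the smaller diagram. Each step shortens $\pi_a$ by at most $\ell_a$ but consumes an amount at least $(1-3\mu)\vert\partial C\vert - \ell_a - \ell_b$ of one short side. Summing the contributions with the bound $\mu \leq 4\lambda$ and absorbing numerical constants gives the desired estimate $L \leq 1000\lambda(D_1 + D_2)$. The main obstacle is the bridging case: ruling out exposed cells that meet both $\pi_a$ and $\pi_b$ via the hypothesis $g \notin H_I$. This requires a delicate combinatorial argument specific to the proper-power form of the relators $w_n^n$, and crucially exploits the fact that $H_I$ is generated by the relator roots $w_n$, so that any short identification in $G_I$ of a translate of $g$ with $H_I$ collapses to $g \in H_I$.
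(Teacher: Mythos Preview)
The paper does not prove this lemma: it is introduced, together with the preceding one, by ``The proof is left to the reader'' as a consequence of small cancellation theory. So there is no argument in the paper to compare yours against; I can only comment on the sketch itself.

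Your plan---fill the geodesic quadrilateral with a reduced van Kampen diagram and analyze Greendlinger cells---is the natural route. Two points need repair. First, the case split is internally inconsistent: having excluded the \emph{bridging case} (the outer arc $\eta$ meeting both $\pi_a$ and $\pi_b$), you then describe the \emph{corner case} as $\eta$ engulfing an entire $\alpha_i$ with pieces $\ell_a$ on $\pi_a$ and $\ell_b$ on $\pi_b$. But an arc touching $\pi_a$, all of $\alpha_i$, and $\pi_b$ crosses two corners and is exactly the bridging configuration you just ruled out. A genuine one-corner arc meets $\pi_a$ and one $\alpha_i$ (or $\pi_b$ and one $\alpha_i$), not both long sides; your inequality $\ell_a+\ell_b+D_i\geq(1-3\mu)\lvert\partial C\rvert$ and the subsequent iteration are built on the wrong picture.

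Second, and more seriously, the bridging argument does not go through as written. The label along $\pi_b$ represents $b_1^{-1}b_2\in H_I$, not a conjugate $ghg^{-1}$, and the labels along $\pi_a,\pi_b$ are arbitrary $G_I$-geodesic words that merely \emph{represent} elements of $H_I$; they are not words in the generators $w_n$. A single exposed cell labeled $w_n^n$ touching both long sides does not by itself produce a relation of the shape $h_1=u\,(ghg^{-1})\,v$, and hence does not force $g\in H_I$. Extracting that conclusion is precisely the substantive content of the lemma, and it requires a genuine analysis of how long subwords of $w_n^{\pm n}$ can lie along geodesics joining two elements of $H_I$ (respectively of $gH_I$). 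You have correctly located the crux, but the sketch does not carry it out.
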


\begin{proof}[Sketch of proof of \autoref{example1}]
The proof uses many results from the work of A.~Ol’shanski\u\i, D.~Osin, and M.~Sapir on lacunary hyperbolic groups  \cite{OlcOsiSap09}.
We put $I$ the set of integers larger than $40$.
The \autoref{res: satisfying C'} shows that $\mathcal R_I$ satisfies the conditions $(SC_1), (SC_2)$ stated in  \cite[Lemma 4.7]{OlcOsiSap09} for $\lambda \leq 1/100$.
Then \cite[Corollary 4.15]{OlcOsiSap09} implies that there exists an infinite subset $J$ of $I$ such that $G_J = \pres{S}{\mathcal R_J}$ which satisfies the graded small cancelation condition.
Let $\mathcal L$ be the set of all loops in the Cayley graph $\Gamma(G_J, S)$ of $G_J$ labeled by words in $\mathcal R_J$.
Then \cite[Theorem 4.17]{OlcOsiSap09} implies that there exists a sequence $d = (d_n)$ such that $\ascong{G_J}d$ is a circle-tree.
Moreover $\ascong{G_J}d$ is tree-graded with respect to the collection  $\mathcal C$ of all distinct limits of \emph{asymptotically visible} sequences in $\mathcal L$.
Thus we need only show that given a sequence $(g_n)$ of elements of $G_J$, if $\limo g_nH_J = \limo H_J$ then $g_n \in H_J$ \oas.
This follows from \autoref{unique limits}.
\end{proof}

\rem
The group $G_J$ from \autoref{example1} cannot be hyperbolic relative to $H_J$ since $H_J$ is not finitely generated.
Note that in this example $\ascong{G_J}d$ is tree-graded with respect to a family of circles.
Hence being sparsely asymptotically tree-graded with respect to a $H_J$ is definitely insufficient to imply hyperbolic relative $H_J$.
While $G_J$ is not hyperbolic relative to $H_J$, it is unknown if $G_J$ hyperbolic  relative to some other subgroup.

\paragraph{Control of the pieces.}
In the statement of \autoref{res: main theorem}, it is stronger to ask that $G$ is sparsely asymptotically tree-graded with respect to $\{H_1, \dots, H_m\}$ than just some asymptotic cone of $G$ is a tree-graded space.
Indeed the first assumption explains where the pieces of the tree-graded structure come from.
One could also ask whether the it is necessary for the pieces to be limits of a subgroup.
In \cite{Behrstock:2006gn}, J. Behrstock proved that any asymptotic cone of the mapping class group $\operatorname{MCG}(\Sigma)$ of a surface $\Sigma$ has global cut points and therefore is tree-graded (with respect to a non-trivial collection of pieces) but the pieces are not limits of subgroups.
Indeed $\operatorname{MCG}(\Sigma)$ is not hyperbolic relative to any family of proper subgroups \cite{Behrstock:2009bb,Anderson:2007cm}.
(It is weaky relatively hyperbolic in a certain sense, though.)
Therefore we wonder if there is a way to characterize finitely generated groups such that some of there asymptotic cones are tree-graded.
In particular does such a group have an acylindrical action on a hyperbolic space?
(See \cite{Osin:2013te} for a study of acylindrically hyperbolic groups.)

\paragraph{Lacunary relatively hyperbolic groups.}
In \cite{OlcOsiSap09}, A.Y.~Ol'shanski\u\i, D.~Osin and M.~Sapir used asymptotic geometry to extend the notion of hyperbolic group: a group is \emph{lacunary hyperbolic} if one of its asymptotic cones is an $\R$-tree.
Among others, they provided many examples of lacunary hyperbolic groups and proved that they share some common properties.
For instance, a lacunary hyperbolic group cannot contain a subgroup isomorphic to $\Z^2$ or the lamplighter group.
In addition, they proved that lacunary hyperbolic groups can be characterized as a limit of hyperbolic groups $G_0 \twoheadrightarrow G_1 \twoheadrightarrow G_2 \twoheadrightarrow \dots$ with some control of the hyperbolicity constant of $G_k$ compare to the injectivity radius of the map $G_k \twoheadrightarrow G_{k+1}$.

\paragraph{}
Following the same approach, one could introduce a new class of groups.
Let $G$ be a finitely generated group and $\{H_1, \dots, H_m\}$ a collection of subgroups of $G$.
We would say that $G$ is \emph{lacunary hyperbolic relative to} $\{H_1, \dots ,H_m\}$, if $G$ is sparsely asymptotically tree-graded with respect to $\{H_1,\dots, H_m\}$.
The first steps to study this class would be to solve the following questions.
\begin{itemize}
	\item Which groups are lacunary relatively hyperbolic but not lacunary hyperbolic?
	\item Is there a characterization of lacunary relatively hyperbolic groups as a limit of relatively hyperbolic groups?
	\item What are the common properties of lacunary relatively hyperbolic groups?
	Let $G$ be a lacunary hyperbolic group relative to $\{H_1, \dots, H_m\}$.
	If $G$ contains a subgroup isomorphic to $\Z^2$ is it necessarily conjugate to a subgroup of one of the $H_i$?
\end{itemize}

\makebiblio

\noindent
\emph{R\'emi Coulon} \\
Department of Mathematics, Vanderbilt University\\
Stevenson Center 1326, Nashville TN 37240, USA\\
\texttt{remi.coulon@vanderbilt.edu} \\
\texttt{http://www.math.vanderbilt.edu/$\sim$coulonrb/}

\vskip 5mm

\noindent
\emph{Michael Hull} \\
Department of Mathematics, Statistics, and Computer Science, University of Illinois at Chicago \\
322 Science and Engineering Offices (M/C 249) 851 S. Morgan Street\\
Chicago, IL 60607-7045\\
\texttt{mbhull@uic.edu} \\
\texttt{http://www.math.uic.edu/graduate/people/profile?netid=mbhull}

\vskip 5mm

\noindent
\emph{Curtis Kent} \\
Department of Mathematics, University of Toronto \\
Room 6290, 40 St. George Street \\
Toronto, Ontario, Canada M5S 2E\\
\texttt{curtkent@math.utoronto.ca} \\
\texttt{http://www.math.toronto.edu/cms/kent-curtis/}

\end{document}